\documentclass[11pt]{article}
\usepackage[a4paper,margin=1in]{geometry}
\usepackage[T1]{fontenc}
\usepackage{lmodern}
\usepackage{amsmath,amssymb,amsthm,mathtools}
\usepackage{graphicx}
\usepackage{subcaption}
\usepackage{enumitem}
\usepackage[numbers,sort&compress]{natbib}
\usepackage[unicode,colorlinks=true,linkcolor=blue,citecolor=blue,urlcolor=blue]{hyperref}
\hypersetup{
  pdftitle={A Composite Representation Theorem for Solution Mappings in Regularized Composite Optimization with Applications to Hausdorff Lipschitz Stability},
  pdfauthor={Peixuan Yang, Wei Yao, Jin Zhang}
}
\numberwithin{equation}{section}

\theoremstyle{plain}
\newtheorem{theorem}{Theorem}[section]
\newtheorem{proposition}[theorem]{Proposition}

\newtheorem{corollary}[theorem]{Corollary}
\theoremstyle{definition}
\newtheorem{example}[theorem]{Example}
\newtheorem{condition}[theorem]{Condition}

\theoremstyle{remark}
\newtheorem{remark}[theorem]{Remark}

\title{A Composite Representation Theorem for Solution Mappings in\\
Regularized Composite Optimization with Applications to\\
Hausdorff Lipschitz Stability}
\author{Peixuan Yang \and Wei Yao \and Jin Zhang}
\date{}

\begin{document}
	
\maketitle
	
\begin{abstract}
We study the geometry and stability of solution mappings in regularized composite optimization, allowing multiple minimizers. We establish a composite representation that separates optimal linear images from their corresponding primal solutions through an image mapping and a generalized basis pursuit mapping. Under a fiber-attainment condition, the image mapping admits a reduced optimization formulation; neither representation requires convexity. For convex regularizers, suitable qualification conditions yield a Fenchel-conjugate representation of the reduced value function and a geometric characterization of the generalized basis pursuit mapping through affine fibers and a fixed lifting set. Using these representations, we establish sufficient conditions for local Hausdorff Lipschitz continuity of possibly set-valued solution mappings. We further show that the primal-space sufficient conditions imply the corresponding image-space conditions, with an additional injectivity requirement arising in the former. The results cover $\ell_1$-type and structured piecewise linear-quadratic regularizers under suitable assumptions. For classical $\ell_1$ regularization, our approach recovers an existing stability result without a finite sign-pattern decomposition. Numerical examples illustrate the recovery of complete solution sets, including a case with a nonpolyhedral graph of the generalized basis pursuit mapping.
\end{abstract} 
	
\noindent\textbf{Keywords.}\enspace regularized composite optimization, solution mappings, composite representation, generalized basis pursuit, Hausdorff Lipschitz stability
	
	\noindent\textbf{Mathematics Subject Classification (2020).}\enspace 49J53, 90C31, 90C46


\section{Introduction}

\subsection{Regularized composite optimization}

Many problems in high-dimensional statistics, signal processing,
and inverse problems involve recovering an unknown parameter or
signal from noisy, incomplete, or indirect observations
\cite{AdcockHansen2021,BuhlmannVanDeGeer2011,
HastieTibshiraniFriedman2009}.
When information is limited or the observation operator is
ill-conditioned, data fitting alone may yield nonunique or
unstable solutions. Regularization addresses this difficulty
by incorporating structural information into the estimation
problem \cite{EnglHankeNeubauer1996}.

Different structural assumptions lead to different regularizers.
Coordinatewise and group sparsity underlie the LASSO and group
LASSO \cite{Tibshirani1996,YuanLin2006}, while sparsity also
plays a central role in compressed sensing
\cite{CandesRombergTao2006Stable,Donoho2006}.
Total variation regularization promotes spatial structure
in imaging and inverse problems
\cite{ChambollePock2011,RudinOsherFatemi1992},
whereas sparsity under a linear transformation leads to
generalized LASSO models
\cite{AliTibshirani2019,TibshiraniTaylor2011}.

These models share a common structure: the data-fidelity term
depends on the decision variable through a linear transformation,
while the regularizer promotes desired structural properties.
Motivated by this structure, we consider the parametric
regularized composite optimization problem
\begin{equation}\label{P1}
\min_{x\in\mathbb R^n}
\left\{
F(Ax,b)+\lambda R(x)
\right\},
\qquad
(b,\lambda)\in\mathbb R^d\times\mathbb R_{++},
\end{equation}
where $A\in\mathbb R^{m\times n}$ is a fixed linear operator,
$b\in\mathbb R^d$ is a problem parameter,
$F:\mathbb R^m\times\mathbb R^d\to\mathbb R$ is a
data-fidelity or loss function,
$R:\mathbb R^n\to(-\infty,+\infty]$ is a regularizer,
and $\lambda>0$ is the regularization parameter.
The parameter $b$ may represent observed data or other
problem-specific information, with $d$ not necessarily
equal to $m$. Depending on the application, $A$ may
represent a design matrix, measurement matrix, or
forward operator.

We study the associated solution mapping
\[
S(b,\lambda)
:=
\arg\min_{x\in\mathbb R^n}
\left\{
F(Ax,b)+\lambda R(x)
\right\}.
\]
Problem~\eqref{P1} has been studied from various
perspectives, including existence, uniqueness, sensitivity,
stability, and statistical recovery guarantees.
In the convex setting, conditions for uniqueness have
been investigated in
\cite{BerkBrugiapagliaHoheisel2023,
BerkBrugiapagliaHoheisel2024,HuYaoZhang2025},
while local regularity properties of solution mappings
have also received considerable attention; see \cite{Nghia2025} and references therein. 

When multiple minimizers coexist, the geometry and
stability of the solution set become more involved,
motivating structural representations that facilitate
its analysis under parameter perturbations.


\subsection{Contributions}

The main contributions are summarized as follows.

First, we develop a composite representation of the
solution mapping $S(b,\lambda)$:
\[
S(b,\lambda)
=
\bigcup_{y\in H(b,\lambda)}\Phi(y)
=
(\Phi\circ H)(b,\lambda),
\]
where
\begin{equation}\label{e-H-definition}
H(b,\lambda)
:=
\{Ax:x\in S(b,\lambda)\}
\end{equation}
is the image mapping, and
\begin{equation}\label{e-Phi-definition}
\Phi(y)
:=
\arg\min_{x\in\mathbb R^n}
\{R(x):Ax=y\}
\end{equation}
is the generalized basis pursuit mapping.
This representation separates the optimal linear images
from the corresponding minimum-regularizer solutions
and does not require convexity.
Under a fiber-attainment condition, we further obtain
\[
H(b,\lambda)
=
\arg\min_{y\in\mathbb R^m}
\{F(y,b)+\lambda V(y)\},
\]
where
\begin{equation}\label{e-V-definition}
V(y)
:=
\inf_{x\in\mathbb R^n}
\{R(x):Ax=y\}
\end{equation}
is the generalized basis pursuit value function.
For proper, closed, convex regularizers, we also derive
a Fenchel-conjugate representation of $V$ under a
suitable qualification condition.

Second, we establish a geometric representation of
$\Phi$ for proper, closed, convex regularizers.
Under a relative-interior qualification condition,
\[
\Phi(y)=A^{-1}(y)\cap C_R,
\]
where $A^{-1}(y):=\{x:Ax=y\}$ and
$C_R:=(\partial R)^{-1}(\operatorname{rge}(A^T))$
is a fixed lifting set determined by $(A,R)$.
This representation reduces the geometric analysis
of $\Phi$ to intersections of affine fibers with
a fixed set. We use this structure to study
Hausdorff Lipschitz continuity when $C_R$ is
convex polyhedral or a finite union of convex
polyhedral sets.

Finally, we establish sufficient conditions for
local Hausdorff Lipschitz stability of possibly
set-valued solution mappings. By combining
local single-valued Lipschitz continuity of $H$
with Hausdorff Lipschitz continuity of $\Phi$,
we obtain stability results without requiring
uniqueness of the original minimizers.
We further show that the primal-space sufficient
conditions imply the corresponding image-space
conditions, with an additional injectivity
requirement arising in the primal-space
nondegeneracy condition.

These results apply to $\ell_1$-type and structured
piecewise linear-quadratic regularizers under suitable
conditions. For classical $\ell_1$ regularization,
our approach recovers a Hausdorff Lipschitz stability
result of \cite{HuYaoZhang2025} without its finite
sign-pattern decomposition. Numerical experiments
illustrate the recovery of complete solution sets,
including cases with nonpolyhedral fiber geometry.


\subsection{Related work}

The stability and sensitivity of set-valued solution
mappings are classical topics in variational analysis
and parametric optimization. Foundational results include
strong regularity of generalized equations
\cite{Robinson1980}, stability of linear and polyhedral
optimization \cite{Klatte1987,MangasarianShiau1987},
and Lipschitz stability of parametric variational
inequalities \cite{Yen1995}. These developments form
part of the broader theory of metric regularity and
perturbation analysis; see
\cite{BonnansShapiro2000,DontchevRockafellar2009,
Ioffe2017,Mordukhovich2006,Mordukhovich2018,RW1998}.

Beyond statistical recovery guarantees, regularized
optimization has been studied through active-set
identification, partial smoothness, uniqueness, and
sensitivity \cite{Lewis2002,VaiterPeyreDossalFadili2018}.
More recently, Lipschitz stability of convex regularized
optimization has been investigated using geometric
conditions \cite{Nghia2025},
$\mathcal C^2$-cone reducibility
\cite{CuiHoheiselNghiaSun2026},
and generalized Hessian and monotone
generalized-equation techniques
\cite{SmulanskyHoheiselNghia2026}.

The image-space reduction considered here is related
to infimal postcomposition and partial minimization
in convex analysis. The value function
$V(y)=\inf_{Ax=y}R(x)$ allows the original problem
to be reduced to the image space, with minimizers
lifted through the corresponding fibers.
Related formulations and lifting results have been
studied in the convex setting; see, for example,
\cite{BricenoAriasPustelnik2024}.
These connections provide a variational perspective
on the image-space representation used in this paper.

The works most closely related to our stability
analysis concern LASSO-type solution mappings.
Local sensitivity and Lipschitz properties of the
classical and square-root LASSO were studied in
\cite{BerkBrugiapagliaHoheisel2023,
BerkBrugiapagliaHoheisel2024}.
For convex data-fidelity functions with $\ell_1$
regularization, \cite{HuYaoZhang2025} developed
a decomposition approach based on a parameterized
generalized equation for the image mapping and a
finite polyhedral decomposition of the primal
solution mapping. Related Lipschitz results for
a broader class of $\ell_1$-regularized solution
mappings were established in \cite{meng2024lip}.
In the $\ell_1$ setting, our approach is closely
related to \cite{HuYaoZhang2025}, but uses the
generalized basis pursuit mapping rather than
a sign-pattern decomposition. 

	
\textbf{Organization.}
Section~\ref{sec2} establishes the composite representation
theorem and characterizes its constituent mappings.
Section~\ref{sec3} develops sufficient conditions for
local Hausdorff Lipschitz stability in the convex setting.
Section~\ref{sec4} presents numerical illustrations of
solution-set recovery, and Section~\ref{sec5} concludes
the paper.


\textbf{Notation.}
Throughout, $\mathbb R_+$ and $\mathbb R_{++}$ denote
the nonnegative and positive real numbers, respectively,
and $[n]:=\{1,\ldots,n\}$. For $t\in\mathbb R$,
$\operatorname{sgn}(t)$ denotes its sign.
For $I\subseteq[n]$, let $I^c:=[n]\setminus I$;
$x_I$ and $A_I$ denote the subvector and submatrix
indexed by $I$, respectively.

All spaces are finite-dimensional Euclidean spaces with
inner product $\langle\cdot,\cdot\rangle$ and norm
$\|\cdot\|$, also used for the operator norm.
For a linear mapping $A$, $\operatorname{rge}A$,
$\ker A$, and $A^T$ denote its range, kernel, and adjoint.
For a subspace $K$, $K^\perp$ denotes its orthogonal
complement. We write $A^{-1}(y):=\{x:Ax=y\}$ for
$y\in\operatorname{rge}A$, and $A^\dagger$ for the
Moore--Penrose pseudoinverse. 

For a set $C$, $\operatorname{ri}C$ and
$\operatorname{par}C$ denote its relative interior
and parallel subspace; $\delta_C$ and $N_C$
denote its indicator function and normal cone.
We write $B(\bar x,\delta)$ for the open ball of
radius $\delta>0$ centered at $\bar x$.
For a proper, closed, convex function $f$,
$\operatorname{dom}f$, $f^*$, and $\partial f$
denote its effective domain, Fenchel conjugate,
and convex subdifferential.

For a multifunction $\Gamma$, $\operatorname{dom}\Gamma$ and
$\operatorname{gph}\Gamma$ denote its domain and graph, with
$(\Gamma_2\circ\Gamma_1)(x):=\bigcup_{y\in\Gamma_1(x)}\Gamma_2(y)$.
For nonempty sets $C,D$, $\operatorname{dist}(x,C)$ and
$d_H(C,D)$ denote point-to-set and Hausdorff distances.
Further definitions appear in Section~\ref{preliminaries}.


\section{A composite representation theorem}
\label{sec2}

In this section, we establish a composite representation of the solution mapping in terms of the image mapping and the generalized basis pursuit mapping. We then derive complementary variational and geometric representations of these two constituent mappings.


\subsection{Main theorem}

Recall the solution mapping
\[
S(b,\lambda)
=
\arg\min_{x\in\mathbb R^n}
\left\{
F(Ax,b)+\lambda R(x)
\right\},
\qquad
(b,\lambda)\in\mathbb R^d\times\mathbb R_{++}.
\]
The following theorem gives its composite representation.

\begin{theorem}\label{T-decomposition}
Let $S$ be the solution mapping of problem~\eqref{P1}, and let the image mapping $H$ and the generalized basis pursuit mapping $\Phi$ be defined by \eqref{e-H-definition} and \eqref{e-Phi-definition}, respectively. Then, for every $(b,\lambda)\in\mathbb R^d\times\mathbb R_{++}$,
\[
S(b,\lambda)
=
(\Phi\circ H)(b,\lambda)
=
\bigcup_{y\in H(b,\lambda)}\Phi(y).
\]
\end{theorem}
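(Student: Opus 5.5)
We prove the set equality by double inclusion, working at a fixed $(b,\lambda)$ and writing $f(x):=F(Ax,b)+\lambda R(x)$. The key observation is that $f$ decouples on each affine fiber $A^{-1}(y)$. On such a fiber the data term equals the constant $F(y,b)$, so for $x\in A^{-1}(y)$ we have $f(x)=F(y,b)+\lambda R(x)$. Since $\lambda>0$, minimizing $f$ over a fiber is the same as minimizing $R$ over it. No convexity is needed. The case $S(b,\lambda)=\emptyset$ is trivial: then $H(b,\lambda)=\emptyset$, and both sides are empty.

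For the inclusion $S(b,\lambda)\subseteq(\Phi\circ H)(b,\lambda)$, take $\bar x\in S(b,\lambda)$ and set $\bar y:=A\bar x$. Then $\bar y\in H(b,\lambda)$ by \eqref{e-H-definition}, so it suffices to show $\bar x\in\Phi(\bar y)$. Let $x$ be any point with $Ax=\bar y$. Optimality of $\bar x$ gives
\[
F(\bar y,b)+\lambda R(\bar x)\le F(\bar y,b)+\lambda R(x).
\]
Because $F$ is real-valued, we may cancel the finite term $F(\bar y,b)$ and divide by $\lambda>0$, which yields $R(\bar x)\le R(x)$. Hence $\bar x\in\Phi(\bar y)$.

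For the reverse inclusion, take $\bar y\in H(b,\lambda)$ and $x\in\Phi(\bar y)$. By \eqref{e-H-definition} there is some $\hat x\in S(b,\lambda)$ with $A\hat x=\bar y$. Since $\hat x$ is feasible for the problem defining $\Phi(\bar y)$, we get $R(x)\le R(\hat x)$. Also $Ax=\bar y=A\hat x$, so
\[
f(x)=F(\bar y,b)+\lambda R(x)\le F(\bar y,b)+\lambda R(\hat x)=f(\hat x)=\inf f.
\]
Therefore $x\in S(b,\lambda)$.

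The only delicate point is how extended values are handled. $R$ may take the value $+\infty$, but whenever $S(b,\lambda)\neq\emptyset$ the optimal value $\inf f$ is finite and attained (we use the convention that $\arg\min$ of a function identically $+\infty$ is empty). Consequently $R(\hat x)<\infty$, and all the comparisons above are between finite numbers or have a finite side. The cancellation of $F(\bar y,b)$ is legitimate because $F$ is real-valued. I expect no genuine obstacle beyond stating these conventions.
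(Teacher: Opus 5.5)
Your proof is correct and is essentially the paper's argument: double inclusion, using that $F(Ax,b)=F(y,b)$ is constant on the fiber $A^{-1}(y)$, so that minimizing the objective over a fiber reduces to minimizing $R$ after cancelling the real-valued term and dividing by $\lambda>0$. Your explicit treatment of the empty case and of extended values is a harmless refinement that the paper leaves implicit. The only other difference, the order of the two inclusions, is immaterial.
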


\begin{proof}
We first show that
\begin{equation}\label{thm1-left}
\bigcup_{y\in H(b,\lambda)}\Phi(y)
\subseteq
S(b,\lambda).
\end{equation}
Let $y\in H(b,\lambda)$. By definition of $H$, there exists
$\bar x\in S(b,\lambda)$ such that $A\bar x=y$. For any
$x\in\Phi(y)$, we have $Ax=A\bar x=y$ and, by definition of $\Phi$,
$R(x)\leq R(\bar x)$. Hence
\[
F(Ax,b)+\lambda R(x)
		=
		F(y,b)+\lambda R(x)
		\le
		F(y,b)+\lambda R(\bar x)
		=
		F(A\bar x,b)+\lambda R(\bar x).
\]
Since $\bar x$ is a minimizer of problem~\eqref{P1}, equality must hold and $x$ is also a minimizer. Thus $x\in S(b,\lambda)$, proving \eqref{thm1-left}.

Conversely, take $x\in S(b,\lambda)$ and set $y:=Ax$. Then
$y\in H(b,\lambda)$. For any $x'\in\mathbb R^n$ satisfying $Ax'=y$, the optimality of $x$ gives
\[
F(y,b)+\lambda R(x)
		=
		F(Ax,b)+\lambda R(x)
		\le
		F(Ax',b)+\lambda R(x')
		=
		F(y,b)+\lambda R(x'). 
\]
Since $\lambda>0$, we obtain $R(x)\leq R(x')$, and hence
$x\in\Phi(y)$. Therefore,
\[
S(b,\lambda)
\subseteq
\bigcup_{y\in H(b,\lambda)}\Phi(y).
\]
Combining the two inclusions proves the result.
\end{proof}


	
	
\subsection{Representation of the image mapping}

The image mapping $H$ in \eqref{e-H-definition} is defined implicitly through the solution mapping $S$, which may make it difficult to analyze directly. We next characterize $H$ as the solution mapping of a reduced optimization problem in the image space. Recall from \eqref{e-V-definition} the value function
\[
V(y)
:=
\inf_{x\in\mathbb R^n}
\{R(x):Ax=y\},
\qquad
y\in\mathbb R^m,
\]
with the convention $V(y)=+\infty$ when $Ax=y$ is infeasible.

\begin{theorem}\label{H-representer}
Suppose that $R:\mathbb R^n\to(-\infty,+\infty]$ is proper, that $F(\cdot,b)$ is real-valued for every $b\in\mathbb R^d$, and that the infimum defining $V(y)$ is attained whenever $V(y)<+\infty$, i.e.,
\begin{equation}\label{e-fiber-attainment}
V(y)<+\infty
\quad\Longrightarrow\quad
\arg\min_{x\in\mathbb R^n}
\{R(x):Ax=y\}
\neq\emptyset.
\end{equation}
Then, for every
$(b,\lambda)\in\mathbb R^d\times\mathbb R_{++}$,
\begin{equation}\label{e-H-representation}
H(b,\lambda)
=
\arg\min_{y\in\mathbb R^m}
\left\{
F(y,b)+\lambda V(y)
\right\}.
\end{equation}
\end{theorem}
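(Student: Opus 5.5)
The plan is to prove the two inclusions in \eqref{e-H-representation} directly, using the key identity
\[
\inf_{x\in\mathbb R^n}\{F(Ax,b)+\lambda R(x)\}
=\inf_{y\in\mathbb R^m}\{F(y,b)+\lambda V(y)\},
\]
which holds without any attainment assumption. To obtain it, split the infimum over $x$ into an infimum over $y$ followed by an infimum over the fiber $A^{-1}(y)$. Since $F(\cdot,b)$ is real-valued and $\lambda>0$, we have $\inf_{Ax=y}\{F(y,b)+\lambda R(x)\}=F(y,b)+\lambda V(y)$, with the value $+\infty$ on infeasible fibers. Properness of $R$ ensures that no $-\infty$ arithmetic issues arise from $R$ itself. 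A value of $-\infty$ for $V$ can occur, but then it does so on both sides, and the argument below handles it.

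\emph{Inclusion $H(b,\lambda)\subseteq\arg\min$.} Let $y=A\bar x$ with $\bar x\in S(b,\lambda)$. By Theorem~\ref{T-decomposition} (more precisely, the second half of its proof), $\bar x\in\Phi(y)$, so $V(y)=R(\bar x)$. Then
\[
F(y,b)+\lambda V(y)=F(A\bar x,b)+\lambda R(\bar x),
\]
and this equals the common infimum. Hence $y$ minimizes the reduced problem. Because the minimum value is then finite, $V(y)$ is finite as well.

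\emph{Inclusion $\arg\min\subseteq H(b,\lambda)$.} Let $\bar y$ minimize $F(\cdot,b)+\lambda V$. I must first check that $V(\bar y)<+\infty$. Since $R$ is proper, some $x_0$ has $R(x_0)<\infty$, so $V(Ax_0)<\infty$ and the reduced objective is not identically $+\infty$; the minimum value is therefore below $+\infty$, which forces $V(\bar y)<+\infty$. By \eqref{e-fiber-attainment} there is $\bar x$ with $A\bar x=\bar y$ and $R(\bar x)=V(\bar y)$; in particular $V(\bar y)$ is finite because $R$ is proper. Then
\[
F(A\bar x,b)+\lambda R(\bar x)=F(\bar y,b)+\lambda V(\bar y)
\]
equals the common infimum, so $\bar x\in S(b,\lambda)$ and $\bar y=A\bar x\in H(b,\lambda)$.

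The main obstacle is the degenerate case. An argmin is interpreted in the usual sense as the set of points attaining a finite infimum, and one worry is $V(\bar y)=-\infty$. Attainment rules this out: $V(y)=R(x)>-\infty$ for some $x$ whenever $V(y)<\infty$. Consequently $V>-\infty$ everywhere, and the value identity and the minimizer transfer are clean. This is the only place fiber attainment is used, apart from producing the lift $\bar x$.
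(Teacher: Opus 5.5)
Your proof is correct and follows the paper's argument: the same infimum identity obtained by grouping points by their image, the same forward inclusion, and the same lift of a reduced minimizer through fiber attainment. The differences are cosmetic. You obtain $V(y)=R(\bar x)$ from $\bar x\in\Phi(y)$ via Theorem~\ref{T-decomposition}, whereas the paper uses only $V(y)\le R(\bar x)$ together with the infimum identity, and you justify the finiteness of $V$ at the relevant points (properness of $R$, and attainment excluding $V=-\infty$) more explicitly than the paper does.
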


\begin{proof}
Fix $(b,\lambda)\in\mathbb R^d\times\mathbb R_{++}$ and set
\[
\vartheta
:=
\inf_{x\in\mathbb R^n}
\left\{
F(Ax,b)+\lambda R(x)
\right\}.
\]
By the definition of $V$ and grouping feasible points according to their image $y=Ax$,
\[
\vartheta
=
\inf_{y\in\mathbb R^m}
\left\{
F(y,b)+\lambda V(y)
\right\}.
\]

Let $y\in H(b,\lambda)$. Then there exists
$\bar x\in S(b,\lambda)$ such that $A\bar x=y$. Since
$V(y)\leq R(\bar x)$,
\[
F(y,b)+\lambda V(y)
\leq
F(A\bar x,b)+\lambda R(\bar x)
=
\vartheta.
\]
The reverse inequality follows from the definition of $\vartheta$ as the infimum of the reduced problem. Hence equality holds, and $y$ is a minimizer of the reduced problem.

Conversely, let $y$ minimize the right-hand side of \eqref{e-H-representation}. Its objective value is finite, so $V(y)<+\infty$. By \eqref{e-fiber-attainment}, there exists $\bar x$ such that
$A\bar x=y$ and $R(\bar x)=V(y)$. Therefore,
\[
F(A\bar x,b)+\lambda R(\bar x)
=
F(y,b)+\lambda V(y)
=
\vartheta,
\]
so $\bar x\in S(b,\lambda)$ and hence
$y=A\bar x\in H(b,\lambda)$. This proves \eqref{e-H-representation}.
\end{proof} 
	
\begin{remark}\label{R-H-representer-generality}
Theorem~\ref{H-representer} does not require convexity or continuity of $F(\cdot,b)$. The fiber-attainment condition~\eqref{e-fiber-attainment} is weaker than coercivity of $R$. It holds, for example, if $R$ is proper, lower semicontinuous, and level-bounded on every affine fiber of $A$, since the corresponding sublevel sets are then compact. For proper, closed, convex $R$, a sufficient condition for fiberwise level-boundedness is
\[
R^\infty(d)>0,
\qquad
\forall\,d\in\ker A\setminus\{0\},
\]
where $R^\infty$ denotes the recession function of $R$.

More precisely, Theorem~\ref{H-representer} only requires attainment of $V(y)$ at minimizers of the reduced problem; condition~\eqref{e-fiber-attainment} provides a parameter-independent sufficient condition. Without this assumption, one still has $H(b,\lambda)
\subseteq
\arg\min_y\{F(y,b)+\lambda V(y)\}$, but equality may fail if a reduced minimizer cannot be lifted to a primal minimizer.
\end{remark}

In the convex setting, we next derive a conjugate representation of $V$, which yields a corresponding characterization of $H$. 

\begin{corollary}\label{cor:H-representation}
Suppose that $F(\cdot,b)$ is real-valued for every $b\in\mathbb R^d$, and that $R:\mathbb R^n\to(-\infty,+\infty]$ is proper, closed, and convex. Assume further that
\begin{equation}\label{QC-H}
\operatorname{rge}(A^T)
\cap
\operatorname{ri}(\operatorname{dom}R^*)
\neq\emptyset,
\end{equation}
where
\[
R^*(v)
:=
\sup_{x\in\mathbb R^n}
\{\langle v,x\rangle-R(x)\}
\]
denotes the Fenchel conjugate of $R$. Then, for every
$(b,\lambda)\in\mathbb R^d\times\mathbb R_{++}$,
\[
H(b,\lambda)
=
\arg\min_{y\in\mathbb R^m}
\left\{
\frac{1}{\lambda}F(y,b)
+
(R^*\circ A^T)^*(y)
\right\}.
\]
\end{corollary}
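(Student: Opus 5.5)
The plan is to reduce the corollary to Theorem~\ref{H-representer}, after establishing two facts about the value function $V(y)=\inf\{R(x):Ax=y\}$. The first is the conjugate identity $V=(R^*\circ A^T)^*$. The second is the fiber-attainment condition~\eqref{e-fiber-attainment}. Once both hold, Theorem~\ref{H-representer} gives $H(b,\lambda)=\arg\min_y\{F(y,b)+\lambda V(y)\}$. Dividing the objective by $\lambda>0$ does not change the minimizers, which yields the stated formula.

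For the conjugate identity, view $V$ as the infimal postcomposition (image function) $AR$. A direct computation shows $V^*(w)=\sup_{y}\sup_{Ax=y}\{\langle w,Ax\rangle-R(x)\}=R^*(A^Tw)$ for every $w$, so $V^*=R^*\circ A^T$. The key step is then the standard result of Rockafellar (Convex Analysis, Theorem~16.3, applied to the dual pair $(R^*,A^T)$). It states that if $\operatorname{rge}(A^T)\cap\operatorname{ri}(\operatorname{dom}R^*)\neq\emptyset$, then $(R^*\circ A^T)^*(y)=\inf\{R^{**}(x):Ax=y\}$. Moreover, whenever this value is finite, the infimum is attained. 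Since $R$ is proper, closed and convex, $R^{**}=R$. Hence $(R^*\circ A^T)^*=V$, and the attainment assertion gives exactly~\eqref{e-fiber-attainment}.

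If a self-contained argument for attainment is preferred, take $\bar v=A^T\bar w\in\operatorname{ri}(\operatorname{dom}R^*)$. For $y$ with $V(y)<\infty$, consider minimizing $R$ over the fiber $A^{-1}(y)$. Attainment fails only through a recession direction $d\in\ker A$ with $R^\infty(d)\le 0$ along which $R$ is not constant. Since $R^\infty=\sigma_{\operatorname{dom}R^*}$, we have $\langle v,d\rangle\le 0$ for all $v\in\operatorname{dom}R^*$. We also have $\langle\bar v,d\rangle=\langle\bar w,Ad\rangle=0$, and $\bar v$ lies in the relative interior. Together these force $\langle v,d\rangle=0$ on $\operatorname{dom}R^*$, so $R^\infty(-d)\le 0$ as well. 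Thus $d$ is a lineality direction of $R$ ($R$ is constant along $\pm d$), and such directions do not obstruct attainment (Rockafellar, Theorem~27.1).

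I expect the main obstacle to be verifying that the qualification condition guarantees both closedness of the image function and attainment; the rest is a direct application. Properness of $V$ is also needed implicitly, and it follows from $V^{**}=V$ together with $\operatorname{dom}(R^*\circ A^T)\neq\emptyset$, which is itself ensured by~\eqref{QC-H}. I will therefore cite Rockafellar's theorem for this step rather than reprove it, and only check that its hypotheses match~\eqref{QC-H}.
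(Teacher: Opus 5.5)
Your proposal is correct and follows essentially the same route as the paper. Both apply Rockafellar's Theorem~16.3 to $R^*$ and $A^T$ under \eqref{QC-H} to obtain $V=(R^*\circ A^T)^*$ with attainment, which gives \eqref{e-fiber-attainment}, and then invoke Theorem~\ref{H-representer} and divide by $\lambda>0$; your recession-direction argument for attainment is a valid self-contained supplement but is not needed once the theorem is cited.
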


\begin{proof}
Applying the exact conjugacy formula for linear compositions
\cite[Theorem~16.3]{Rockafellar1970} to $R^*$ and
$A^T:\mathbb R^m\to\mathbb R^n$, condition~\eqref{QC-H} gives
\[
(R^*\circ A^T)^*(y)
=
\min_{Ax=y}R^{**}(x)
=
\min_{Ax=y}R(x)
=
V(y)
\]
whenever the value is finite, where $R^{**}=R$ because $R$ is proper, closed, and convex. In particular, the infimum defining $V(y)$ is attained whenever $V(y)<+\infty$, so condition~\eqref{e-fiber-attainment} holds. The conclusion follows from Theorem~\ref{H-representer} by scaling by $\lambda^{-1}$. 
\end{proof} 
	
\begin{remark}\label{rem:H-representation}
Corollary~\ref{cor:H-representation} requires convexity only
of $R$, not of $F(\cdot,b)$. The qualification
condition~\eqref{QC-H} ensures both the exact conjugate
representation $V=(R^*\circ A^T)^*$ and fiberwise
attainment whenever $V(y)$ is finite. Thus,
condition~\eqref{e-fiber-attainment} in
Theorem~\ref{H-representer} holds automatically.

Unlike the generalized equation characterization of $H$
for convex $\ell_1$-regularized problems in
\cite{HuYaoZhang2025}, our representation uses a reduced
image-space problem involving the generalized basis pursuit
value function. The reduced representation in
Theorem~\ref{H-representer} applies more generally under
fiberwise attainment, without requiring convexity.
\end{remark}

We next use Corollary~\ref{cor:H-representation} to study
the local single-valuedness and Lipschitz continuity of $H$
via \cite[Theorem~4.4]{CuiHoheiselNghiaSun2026}.
We first recall the required $\mathcal C^2$-cone reducibility
property.

A closed convex set $\Theta\subseteq\mathcal E$ is
$\mathcal C^2$-cone reducible at $\bar v\in\Theta$ if
there exist a neighborhood $\mathcal U$ of $\bar v$,
a Euclidean space $W$, a pointed closed convex cone
$\mathcal K\subseteq W$, and a $\mathcal C^2$ mapping
$h:\mathcal U\to W$ such that $h(\bar v)=0$,
$Dh(\bar v)$ is surjective, and $\Theta\cap\mathcal U
=
\{v\in\mathcal U:h(v)\in\mathcal K\}$.  
A proper, closed, convex function $\varphi$ is
$\mathcal C^2$-cone reducible at
$\bar v\in\operatorname{dom}\varphi$ if
$\operatorname{epi}\varphi$ is $\mathcal C^2$-cone reducible
at $(\bar v,\varphi(\bar v))$.

\begin{theorem}\label{L-H-IFT}
Suppose that $R:\mathbb R^n\to(-\infty,+\infty]$ is proper, closed, and convex and satisfies condition~\eqref{QC-H}. Let
\[
\bar p=(\bar b,\bar\lambda),
\qquad
\bar y\in H(\bar p).
\]
Assume that $F$ is twice continuously differentiable in a neighborhood of $(\bar y,\bar b)$ and that $F(\cdot,b)$ is convex for every $b$ near $\bar b$. Define
\[
\bar z
:=
-\frac{1}{\bar\lambda}
\nabla_yF(\bar y,\bar b),
\qquad
\mathcal L
:=
\operatorname{par}(\partial R)^{-1}(A^T\bar z)
=
\operatorname{par}\partial R^*(A^T\bar z).
\]
Suppose further that $R^*\circ A^T$ is $\mathcal C^2$-cone reducible at $\bar z$ and
\begin{equation}\label{C-H-nondegeneracy}
\ker\nabla_{yy}^2F(\bar y,\bar b)
\cap
A\mathcal L
=
\{0\}.
\end{equation}
Then $H(\bar p)=\{\bar y\}$, and $H$ is single-valued and Lipschitz continuous in a neighborhood of $\bar p$.  
\end{theorem}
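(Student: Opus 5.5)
Starting from Corollary~\ref{cor:H-representation}, we can write $H(p)$ as the solution set of the reduced convex problem
\[
\min_{y\in\mathbb R^m}\Big\{\tfrac{1}{\lambda}F(y,b)+g(y)\Big\},\qquad g:=(R^*\circ A^T)^*.
\]
The plan is to view this as a parametric problem $\min_y\{f(y,p)+g(y)\}$ with smooth part $f(y,p):=\lambda^{-1}F(y,b)$ and fixed convex part $g=h^*$, where $h:=R^*\circ A^T$. The aim is to bring it into the form required by \cite[Theorem~4.4]{CuiHoheiselNghiaSun2026}. For $p$ near $\bar p$, the function $f(\cdot,p)$ is convex and $\mathcal C^2$ near $\bar y$, and since $\bar\lambda>0$ it is jointly $\mathcal C^2$ in $(y,p)$. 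The function $h$ is proper, closed and convex: under \eqref{QC-H}, $\operatorname{dom}h\neq\emptyset$ and $h^{**}=h$, so $g$ is proper, closed and convex with $g^*=h$. Because $\bar y\in H(\bar p)$, optimality gives $-\nabla_y f(\bar y,\bar p)=\bar z\in\partial g(\bar y)$, which is equivalent to $\bar y\in\partial h(\bar z)$. So the $\mathcal C^2$-cone reducibility hypothesis is placed on $g^*=h$ at the dual point $\bar z$, which is exactly the form I expect the cited theorem to use. Its conclusion would then be that the solution mapping is single-valued and Lipschitz near $\bar p$, provided a nondegeneracy condition of the type $\ker\nabla^2_{yy}f(\bar y,\bar p)\cap\operatorname{par}\partial g^*(\bar z)=\{0\}$ holds.

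The second step is to identify the subspace in the cited condition with $A\mathcal L$. Since $\nabla^2_{yy}f=\bar\lambda^{-1}\nabla^2_{yy}F$, the kernels coincide. It remains to show
\[
\operatorname{par}\partial h(\bar z)=A\operatorname{par}\partial R^*(A^T\bar z).
\]
The chain rule $\partial(R^*\circ A^T)(\bar z)=A\,\partial R^*(A^T\bar z)$ holds under \eqref{QC-H} by \cite[Theorem~23.9]{Rockafellar1970}, because $\operatorname{rge}A^T$ meets $\operatorname{ri}\operatorname{dom}R^*$. Taking parallel subspaces then commutes with the linear map $A$: the affine hull of $AC$ is $A$ applied to the affine hull of $C$, so the parallel subspace of $AC$ is $A\,\operatorname{par}C$. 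I also need to justify the stated identity $\mathcal L=\operatorname{par}(\partial R)^{-1}(A^T\bar z)$, which follows from $(\partial R)^{-1}=\partial R^*$ for closed convex $R$. With these identifications, \eqref{C-H-nondegeneracy} is precisely the cited nondegeneracy condition.

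Finally, the uniqueness claim $H(\bar p)=\{\bar y\}$ is part of the conclusion of the cited result: single-valuedness near $\bar p$ includes the point $\bar p$ itself. If needed, it can also be checked directly. Any other minimizer $y'$ satisfies $y'-\bar y\in\ker\nabla^2_{yy}F\cap A\mathcal L$, because all minimizers share the same dual point $\bar z$ and $F(\cdot,\bar b)$ has constant gradient along the segment between them. The nondegeneracy condition then forces $y'=\bar y$.

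The main obstacle is matching hypotheses exactly with \cite[Theorem~4.4]{CuiHoheiselNghiaSun2026}, in particular its parametrization and its requirement that the parameter enter only the smooth part. This is handled by absorbing $\lambda$ into $f$, which is legitimate because $\lambda$ stays bounded away from $0$ near $\bar p$.
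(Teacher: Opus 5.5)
Your proposal is correct and follows the paper's proof essentially step for step. You rewrite $H$ via Corollary~\ref{cor:H-representation} with $\psi=(R^*\circ A^T)^*$ and $\lambda$ absorbed into the smooth part, identify $\operatorname{par}\partial\psi^*(\bar z)=A\mathcal L$ through the chain rule under \eqref{QC-H}, match the Hessian kernels, and invoke \cite[Theorem~4.4]{CuiHoheiselNghiaSun2026}, which also gives $H(\bar p)=\{\bar y\}$. Your optional direct uniqueness check is not needed, but as written it should be restricted to a short subsegment $[\bar y,\bar y+t(y'-\bar y)]$ of minimizers near $\bar y$, because $F$ is only assumed $\mathcal C^2$ near $(\bar y,\bar b)$.
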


\begin{proof}
Set
\[
\psi:=(R^*\circ A^T)^*,
\qquad
\widehat F(y,p):=\frac{1}{\lambda}F(y,b),
\qquad
p=(b,\lambda).
\]
By Corollary~\ref{cor:H-representation},
\[
H(p)
=
\arg\min_{y\in\mathbb R^m}
\left\{
\widehat F(y,p)+\psi(y)
\right\}.
\]
Since $\bar\lambda>0$, $\widehat F$ is twice continuously differentiable near $(\bar y,\bar p)$, and $\widehat F(\cdot,p)$ is convex for every $p$ sufficiently close to $\bar p$. We now verify the remaining conditions of \cite[Theorem~4.4]{CuiHoheiselNghiaSun2026}.

Since $\psi^*=R^*\circ A^T$, condition~\eqref{QC-H} and the subdifferential chain rule give
\[
\partial\psi^*(z)
=
A\partial R^*(A^Tz)
=
A(\partial R)^{-1}(A^Tz).
\]
Hence, using the nonempty convexity of
$(\partial R)^{-1}(A^T\bar z)$,
\[
\operatorname{par}\partial\psi^*(\bar z)
=
A\operatorname{par}(\partial R)^{-1}(A^T\bar z)
=
A\mathcal L.
\]
The first-order optimality condition at $\bar y\in H(\bar p)$ yields
\[
\bar z
=
-\nabla_y\widehat F(\bar y,\bar p)
=
-\frac{1}{\bar\lambda}\nabla_yF(\bar y,\bar b)
\in
\partial\psi(\bar y).
\]
Moreover, $\nabla_{yy}^2\widehat F(\bar y,\bar p)
=
\frac{1}{\bar\lambda}
\nabla_{yy}^2F(\bar y,\bar b)$, 
and hence $\ker\nabla_{yy}^2\widehat F(\bar y,\bar p)
=
\ker\nabla_{yy}^2F(\bar y,\bar b)$. 
Therefore, condition~\eqref{C-H-nondegeneracy} is equivalent to
\[
\ker\nabla_{yy}^2\widehat F(\bar y,\bar p)
\cap
\operatorname{par}\partial\psi^*(\bar z)
=
\{0\}.
\]

Finally, $\psi^*=R^*\circ A^T$ is $\mathcal C^2$-cone reducible at $\bar z$ by assumption. Thus all the conditions of \cite[Theorem~4.4]{CuiHoheiselNghiaSun2026} are satisfied. It follows that
$H(\bar p)=\{\bar y\}$ and that $H$ is single-valued and Lipschitz continuous in a neighborhood of $\bar p$.  
\end{proof} 

\begin{remark}\label{remark_stability_H}
The image-space formulation can yield a less restrictive
sufficient condition than direct primal-space analysis using
\cite[Theorem~4.4]{CuiHoheiselNghiaSun2026}.
For example, if $\nabla_{yy}^2F(\bar y,\bar b)\succ0$,
as for the squared loss, condition~\eqref{C-H-nondegeneracy}
holds automatically, whereas the primal-space Hessian
$A^T\nabla_{yy}^2F(\bar y,\bar b)A$ may be singular.
Section~\ref{image-primal} shows that the primal-space
sufficient conditions imply the image-space conditions,
but not conversely in general. 
\end{remark} 

	
\subsection{A geometric representation of the generalized basis pursuit mapping}

Having obtained representations of the image mapping $H$, we now turn to the generalized basis pursuit mapping
\[
\Phi(y)
=
\arg\min_{x\in\mathbb R^n}
\left\{
R(x):Ax=y
\right\}.
\]
We derive a geometric representation of $\Phi$ as the intersection of an affine fiber with a fixed set determined by $(A,R)$. 

Let $K:=\ker A$. For $y\in\operatorname{rge}A$, define the affine fiber $A^{-1}(y)
:=
\{x\in\mathbb R^n:Ax=y\}$. 
Since $A^{-1}(y)$ is an affine subspace with direction space $K$, its normal cone is
\[
N_{A^{-1}(y)}(x)
=
K^\perp
=
\operatorname{rge}(A^T),
\qquad
x\in A^{-1}(y).
\]
Define the fixed lifting set associated with $A$ and $R$ by
\begin{equation}\label{C-def}
C_R
:=
(\partial R)^{-1}(K^\perp)
=
\left\{
x\in\mathbb R^n:
\partial R(x)\cap K^\perp\neq\emptyset
\right\}.
\end{equation}
Thus, $C_R$ consists of points at which $R$ admits a subgradient orthogonal to the directions of the affine fibers. In particular, $C_R$ depends only on $(A,R)$ and not on the image variable $y$.

\begin{proposition}[Geometric representation of $\Phi$]
\label{P-Phi-geometry}
Suppose that $R:\mathbb R^n\to(-\infty,+\infty]$ is proper, closed, and convex, and that
\begin{equation}\label{QC-Phi}
A^{-1}(y)\cap\operatorname{ri}(\operatorname{dom}R)
\neq\emptyset,
\qquad
\forall\,y\in\operatorname{rge}A.
\end{equation}
Then
\begin{equation}\label{Phi-C}
\Phi(y)
=
A^{-1}(y)\cap C_R,
\qquad
y\in\operatorname{rge}A.
\end{equation}
Moreover, $\operatorname{dom}\Phi=A(C_R)$.  
\end{proposition}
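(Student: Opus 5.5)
The plan is to characterize membership in $\Phi(y)$ through the first-order optimality condition of the convex program $\min\{R(x)+\delta_{A^{-1}(y)}(x)\}$. Fix $y\in\operatorname{rge}A$. Since $R$ is proper, closed, and convex, and $A^{-1}(y)$ is a nonempty affine subspace, $x\in\Phi(y)$ holds if and only if
\[
0\in\partial\bigl(R+\delta_{A^{-1}(y)}\bigr)(x).
\]
The sum rule \cite[Theorem~23.8]{Rockafellar1970} splits this subdifferential as $\partial R(x)+N_{A^{-1}(y)}(x)$. Its hypothesis is $\operatorname{ri}(\operatorname{dom}R)\cap\operatorname{ri}(A^{-1}(y))\neq\emptyset$, and since $\operatorname{ri}(A^{-1}(y))=A^{-1}(y)$, this is exactly condition~\eqref{QC-Phi}. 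For polyhedral sets such as affine subspaces, $\operatorname{ri}(\operatorname{dom}R)$ could even be replaced by $\operatorname{dom}R$, but we do not need this refinement.

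Next we use the normal cone formula recorded before the proposition, $N_{A^{-1}(y)}(x)=K^\perp$ for $x\in A^{-1}(y)$, while the normal cone is empty for $x\notin A^{-1}(y)$. Then $x\in\Phi(y)$ if and only if $Ax=y$ and $0\in\partial R(x)+K^\perp$. Because $K^\perp$ is a subspace, the latter condition is equivalent to $\partial R(x)\cap K^\perp\neq\emptyset$, that is, $x\in C_R$ by \eqref{C-def}. This proves \eqref{Phi-C}.

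For the domain claim, recall that $\operatorname{dom}\Phi\subseteq\operatorname{rge}A$, since $\Phi(y)=\emptyset$ when $A^{-1}(y)$ is empty. If $y\in\operatorname{dom}\Phi$, then some $x\in A^{-1}(y)\cap C_R$ exists, so $y=Ax\in A(C_R)$. Conversely, if $y=Ax$ with $x\in C_R$, then $y\in\operatorname{rge}A$ and $x\in A^{-1}(y)\cap C_R=\Phi(y)$ by \eqref{Phi-C}.

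The only delicate step is the justification of the subdifferential sum rule. The qualification condition must be matched precisely, and we must note that the direction "$0\in\partial R(x)+K^\perp$ implies optimality" holds without any qualification, since $\partial R+N\subseteq\partial(R+\delta)$ always. Only the necessity direction uses \eqref{QC-Phi}.
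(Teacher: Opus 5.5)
Your proof is correct and follows the paper's argument essentially step for step: Fermat's rule for $R+\delta_{A^{-1}(y)}$, the relative-interior sum rule justified by \eqref{QC-Phi}, $N_{A^{-1}(y)}(x)=K^\perp$, the subspace step, and then reading off $\operatorname{dom}\Phi=A(C_R)$.

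One caution about your unused aside. The polyhedral refinement in \cite[Theorem~23.8]{Rockafellar1970} relaxes the relative-interior requirement only for the polyhedral summand $\delta_{A^{-1}(y)}$, whose domain is already relatively open, so on a single fiber it does not let you replace $\operatorname{ri}(\operatorname{dom}R)$ by $\operatorname{dom}R$. For instance, take $A(x_1,x_2)=x_1$, $R(x)=-\sqrt{x_1}$ for $x_1\ge0$ and $R(x)=+\infty$ otherwise, and $y=0$: then $\Phi(0)=A^{-1}(0)$ because $R\equiv0$ there, while $\partial R(x)=\emptyset$ on that fiber, so $A^{-1}(0)\cap C_R=\emptyset$. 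Requiring $A^{-1}(y)\cap\operatorname{dom}R\neq\emptyset$ for \emph{all} $y\in\operatorname{rge}A$ is nevertheless equivalent to \eqref{QC-Phi}, but the reason is $A(\operatorname{ri}\operatorname{dom}R)=\operatorname{ri}A(\operatorname{dom}R)$, not polyhedrality of the fibers.
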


\begin{proof}
Fix $y\in\operatorname{rge}A$. By \eqref{QC-Phi}, the relative-interior qualification for $R$ and $\delta_{A^{-1}(y)}$ holds. Hence, for every $x\in A^{-1}(y)$, the convex subdifferential sum rule gives
\[
\partial\bigl(R+\delta_{A^{-1}(y)}\bigr)(x)
=
\partial R(x)+N_{A^{-1}(y)}(x)
=
\partial R(x)+K^\perp.
\]
Therefore,
\[
x\in\Phi(y)
\Longleftrightarrow
0\in\partial\bigl(R+\delta_{A^{-1}(y)}\bigr)(x)
\Longleftrightarrow
0\in\partial R(x)+K^\perp
\Longleftrightarrow
\partial R(x)\cap K^\perp\neq\emptyset, 
\] 
where the last equivalence uses the fact that $K^\perp$ is a linear subspace. Thus,
\[
x\in\Phi(y)
\Longleftrightarrow
x\in A^{-1}(y)\cap C_R,
\]
which proves \eqref{Phi-C}. The domain characterization follows immediately:
\[
y\in\operatorname{dom}\Phi
\Longleftrightarrow
A^{-1}(y)\cap C_R\neq\emptyset
\Longleftrightarrow
y\in A(C_R).
\] 
\end{proof}

Condition~\eqref{QC-Phi} holds automatically when $R$ is
finite-valued on $\mathbb R^n$, as for the $\ell_1$ and
$\ell_2$ norms. In these cases, $C_R$ can often be
characterized explicitly.

For $R(x)=\|x\|^2/2$, we have
$\partial R(x)=\{x\}$ and hence
$C_R=K^\perp=\operatorname{rge}(A^T)$.
Since every affine fiber intersects $K^\perp$ at exactly
one point, its unique minimum-norm element,
\[
\Phi(y)=\{A^\dagger y\},
\qquad y\in\operatorname{rge}A.
\]

For $R(x)=\|x\|_1$, $x\in C_R$ if and only if
there exists $z\in\mathbb R^m$ such that
\[
(A^Tz)_i=\operatorname{sgn}(x_i)\ \text{if }x_i\neq0,
\qquad
|(A^Tz)_i|\leq1\ \text{otherwise}.
\]
Thus $C_R$ is characterized by dual certificates
and is a finite union of convex polyhedral cones,
not necessarily convex. 

Proposition~\ref{P-Phi-geometry} separates the dependence
on $y$, through the affine fiber $A^{-1}(y)$, from the
fixed geometry of $C_R$. To examine the variation of
these fibers, let
$B:=A|_{K^\perp}:K^\perp\to\operatorname{rge}A$.
Since $B$ is a linear isomorphism,
$A^{-1}(y)=B^{-1}(y)+K$ for $y\in\operatorname{rge}A$.
Consequently,
\[
d_H\bigl(A^{-1}(y_1),A^{-1}(y_2)\bigr)
=
\|B^{-1}(y_1)-B^{-1}(y_2)\|
\leq
\|B^{-1}\|\,\|y_1-y_2\|.
\]
Thus the affine fibers vary Lipschitz continuously,
and the stability of $\Phi$ depends on their intersections
with the fixed lifting set $C_R$. 
	
	
\section{Applications to Lipschitz stability}\label{sec3}

We apply the composite representation $S=\Phi\circ H$
to study the local Hausdorff Lipschitz stability of $S$.
We first establish a composition rule for set-valued mappings,
then compare image-space and primal-space stability conditions,
and finally derive stability results for representative
classes of regularizers.


\subsection{Preliminaries}
\label{preliminaries}

We recall several notions used in the stability analysis.

For a nonempty convex set $C$, its parallel subspace is
$\operatorname{par}C:=\operatorname{span}(C-x)$ for any
$x\in C$, and its polar cone is
$C^\circ:=\{u:\langle u,v\rangle\leq0,\ \forall v\in C\}$.
A convex set is polyhedral if it is the intersection of
finitely many closed halfspaces.

For nonempty sets $C,D$, their excess and Hausdorff
distance are
\[
e(C,D):=\sup_{x\in C}\operatorname{dist}(x,D),
\qquad
d_H(C,D):=\max\{e(C,D),e(D,C)\},
\]
where $\operatorname{dist}(x,D):=\inf_{y\in D}\|x-y\|$.
These quantities may be infinite.

Let $\Gamma:\mathbb R^m\rightrightarrows\mathbb R^n$
and $D\subseteq\operatorname{dom}\Gamma$.
The mapping $\Gamma$ is Hausdorff Lipschitz continuous
relative to $D$ if there exists $L>0$ such that
\[
d_H\bigl(\Gamma(x_1),\Gamma(x_2)\bigr)
\leq L\|x_1-x_2\|,
\qquad x_1,x_2\in D.
\]
It is locally Hausdorff Lipschitz continuous at
$\bar x\in D$ relative to $D$ if this estimate holds
for all $x_1,x_2\in D\cap B(\bar x,\delta)$
for some $\delta,L>0$.

The mapping $\Gamma$ is outer Lipschitz continuous at
$\bar x\in D$ relative to $D$ if $\Gamma(\bar x)$ is
closed and there exist $\delta,\kappa>0$ such that
\[
e\bigl(\Gamma(x),\Gamma(\bar x)\bigr)
\leq\kappa\|x-\bar x\|,
\qquad x\in D\cap B(\bar x,\delta).
\]
It is outer Lipschitz continuous relative to $D$
with modulus $\kappa$ if this estimate holds at
every $\bar x\in D$ with the same $\kappa$,
allowing $\delta$ to depend on $\bar x$.

For $\{x^k\}\subseteq D$, write
$x^k\xrightarrow{D}\bar x$ if $x^k\to\bar x$.
The inner limit of $\Gamma$ at $\bar x\in D$
relative to $D$ is
\[
\liminf_{x\xrightarrow{D}\bar x}\Gamma(x)
:=
\left\{
y:
\begin{array}{l}
\text{for every }x^k\xrightarrow{D}\bar x,
\text{ there exist }y^k\in\Gamma(x^k)\\
\text{for all sufficiently large }k
\text{ such that }y^k\to y
\end{array}
\right\}.
\]
The mapping $\Gamma$ is inner semicontinuous at
$\bar x\in D$ relative to $D$ if $\Gamma(\bar x)
\subseteq
\liminf_{x\xrightarrow{D}\bar x}\Gamma(x)$. 
It is inner semicontinuous relative to $D$ if
this holds at every $\bar x\in D$. 

	
\subsection{A composition rule for Hausdorff Lipschitz stability}

We begin with a composition rule for Hausdorff Lipschitz continuity of set-valued mappings, which will later be applied to $S=\Phi\circ H$.

\begin{proposition}\label{P-transfer}
Let $G_1:\mathbb R^r\rightrightarrows\mathbb R^m$ and
$G_2:\mathbb R^m\rightrightarrows\mathbb R^n$ be set-valued mappings, and define
$G:=G_2\circ G_1$. Fix $\bar p\in\operatorname{dom}G$. Suppose that there exist a neighborhood $U$ of $\bar p$ and a set
$V\subseteq\operatorname{dom}G_2$ such that
\[
G_1(p)\subseteq V,
\qquad
\forall\,p\in U\cap\operatorname{dom}G.
\]
Assume that $G_1$ is Hausdorff Lipschitz continuous on
$U\cap\operatorname{dom}G$ with modulus $L_1$, and that $G_2$ is Hausdorff Lipschitz continuous on $V$ with modulus $L_2$. Then $G$ is Hausdorff Lipschitz continuous on $U\cap\operatorname{dom}G$ with modulus $L_2L_1$. In particular, $G$ is locally Hausdorff Lipschitz continuous at $\bar p$ relative to $\operatorname{dom}G$, with
\[
d_H\bigl(G(p_1),G(p_2)\bigr)
\leq
L_2L_1\|p_1-p_2\|,
\qquad
\forall\,p_1,p_2\in U\cap\operatorname{dom}G.
\]
\end{proposition}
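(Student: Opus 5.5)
The plan is to bound the excess $e(G(p_1),G(p_2))$ directly for fixed $p_1,p_2\in U\cap\operatorname{dom}G$, and then obtain the Hausdorff bound by symmetry.

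\textbf{Setup.} Since $p_i\in\operatorname{dom}G$, some $y\in G_1(p_i)$ has $G_2(y)\neq\emptyset$, so $G(p_i)\neq\emptyset$ and $G_1(p_i)\neq\emptyset$. By hypothesis $G_1(p_i)\subseteq V\subseteq\operatorname{dom}G_2$. Hence every $y\in G_1(p_i)$ has $G_2(y)$ nonempty, and the Hausdorff Lipschitz estimate for $G_2$ on $V$ applies to any pair of points drawn from $G_1(p_1)\cup G_1(p_2)$.

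\textbf{Excess bound.} Take $x\in G(p_1)$; then $x\in G_2(y_1)$ for some $y_1\in G_1(p_1)$. Fix $\varepsilon>0$. The Lipschitz estimate for $G_1$ gives
\[
\operatorname{dist}(y_1,G_1(p_2))\le e\bigl(G_1(p_1),G_1(p_2)\bigr)\le L_1\|p_1-p_2\|,
\]
so there is $y_2\in G_1(p_2)$ with $\|y_1-y_2\|\le L_1\|p_1-p_2\|+\varepsilon$. Since $y_1,y_2\in V$,
\[
\operatorname{dist}(x,G_2(y_2))\le d_H\bigl(G_2(y_1),G_2(y_2)\bigr)\le L_2\|y_1-y_2\|.
\]
Because $G_2(y_2)\subseteq G(p_2)$, this yields
\[
\operatorname{dist}(x,G(p_2))\le L_2L_1\|p_1-p_2\|+L_2\varepsilon.
\]
Taking the supremum over $x\in G(p_1)$ and letting $\varepsilon\downarrow0$ gives $e(G(p_1),G(p_2))\le L_2L_1\|p_1-p_2\|$.

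\textbf{Conclusion.} Exchanging the roles of $p_1$ and $p_2$ bounds the other excess, so $d_H(G(p_1),G(p_2))\le L_2L_1\|p_1-p_2\|$ on $U\cap\operatorname{dom}G$. Since $U$ is a neighborhood of $\bar p$, it contains a ball $B(\bar p,\delta)$, and the local Hausdorff Lipschitz statement relative to $\operatorname{dom}G$ follows.

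The only point needing care is the infimum in the definition of $\operatorname{dist}$: an exact nearest point $y_2$ may fail to exist, which the $\varepsilon$-approximation handles. The inclusion $G_1(p)\subseteq V$ is what guarantees that $y_1,y_2$ lie in the set where the estimate for $G_2$ is valid. There is no real obstacle beyond this.
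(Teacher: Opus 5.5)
Your proof is correct and follows essentially the same route as the paper: an $\varepsilon$-approximate nearest point $y_2\in G_1(p_2)$, the Hausdorff estimate for $G_2$ on $V$ (valid because $G_1(p_i)\subseteq V$), and symmetry of the excess. The only differences are cosmetic. You bound $\operatorname{dist}(x,G_2(y_2))$ directly instead of choosing an approximate nearest $x_2$, and you insert $L_1\|p_1-p_2\|$ immediately rather than first proving $d_H(G(p_1),G(p_2))\le L_2\,d_H(G_1(p_1),G_1(p_2))$.
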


\begin{proof}
Fix $p_i\in U\cap\operatorname{dom}G$ for $i=1,2$, and set
$Y_i:=G_1(p_i)$ and $X_i:=G(p_i)$. Then
\[
X_i
=
\bigcup_{y\in Y_i}G_2(y),
\qquad
i=1,2.
\]

We first estimate $e(X_1,X_2)$. Fix $x_1\in X_1$ and choose
$y_1\in Y_1$ such that $x_1\in G_2(y_1)$. For any $\eta>0$, there exists
$y_2\in Y_2$ such that
\begin{equation}\label{dist_y}
\|y_1-y_2\|
\leq
\operatorname{dist}(y_1,Y_2)+\eta
\leq
d_H(Y_1,Y_2)+\eta.
\end{equation}
Since $Y_i\subseteq V$, the Hausdorff Lipschitz continuity of $G_2$ gives
\[
d_H\bigl(G_2(y_1),G_2(y_2)\bigr)
\leq
L_2\|y_1-y_2\|.
\]
Choose $x_2\in G_2(y_2)\subseteq X_2$ such that
\[
\|x_1-x_2\|
\leq
\operatorname{dist}\bigl(x_1,G_2(y_2)\bigr)+\eta
\leq
d_H\bigl(G_2(y_1),G_2(y_2)\bigr)+\eta.
\]
Combining these estimates with \eqref{dist_y} yields
\[
\operatorname{dist}(x_1,X_2)
\leq
L_2d_H(Y_1,Y_2)+(L_2+1)\eta.
\]
Letting $\eta\downarrow0$ and taking the supremum over $x_1\in X_1$, we obtain $e(X_1,X_2)
\leq
L_2d_H(Y_1,Y_2)$.  
By symmetry,
$e(X_2,X_1)\leq L_2d_H(Y_1,Y_2)$, and therefore
\[
d_H(X_1,X_2)
\leq
L_2d_H(Y_1,Y_2)
\leq
L_2L_1\|p_1-p_2\|.
\]
\end{proof}

\begin{remark}
Proposition~\ref{P-transfer} complements standard composition rules
for the Aubin property of set-valued mappings
\cite[Proposition~3F.10]{DontchevRockafellar2009}.
Here, Hausdorff Lipschitz continuity of the composition
follows directly from the corresponding estimates for its components.
\end{remark}

Applying Proposition~\ref{P-transfer} to $S=\Phi\circ H$
allows the stability estimates for $H$ and $\Phi$ to be
combined. We next compare the image-space and primal-space
stability conditions. 

	
\subsection{Image-space versus primal-space stability conditions}
\label{image-primal}

Theorem~\ref{L-H-IFT} establishes sufficient conditions for
local single-valuedness and Lipschitz continuity of $H$.
We compare these conditions with those obtained by applying
\cite[Theorem~4.4]{CuiHoheiselNghiaSun2026} directly to $S$.

Since $H=A\circ S$ and $A$ is linear,
\[
d_H\bigl(H(p_1),H(p_2)\bigr)
\leq
\|A\|\,d_H\bigl(S(p_1),S(p_2)\bigr).
\]
Thus, Hausdorff Lipschitz continuity of $S$ implies that
of $H$, and local single-valued Lipschitz continuity
of $S$ implies the corresponding property of $H$.
The converse need not hold.

\begin{example}\label{E-H-single-S-set}
Let $A=(1,1)$, $R(x)=\|x\|_1$, and
$F(y,b)=(y-b)^2/2$. Since
$V(y)=\min_{Ax=y}\|x\|_1=|y|$, $H(b,\lambda)=\{\operatorname{sgn}(b)(|b|-\lambda)_+\}$ is single-valued and Lipschitz continuous.
However, for $b>\lambda>0$, 
\[
S(b,\lambda)
=
\Phi(b-\lambda)
=
\{x\in\mathbb R_+^2:x_1+x_2=b-\lambda\},
\]
which is a nontrivial line segment.
Thus, $H$ can be single-valued and Lipschitz continuous
even when $S$ is set-valued.
\end{example}

We now compare the sufficient conditions in the two spaces.
Fix $\bar p=(\bar b,\bar\lambda)\in\operatorname{dom}S$
and $\bar x\in S(\bar p)$, and set $\bar y:=A\bar x$.
Suppose that $R$ is proper, closed, and convex, that $F$
is twice continuously differentiable near $(\bar y,\bar b)$,
and that $F(\cdot,b)$ is convex for every $b$ near $\bar b$.
Define
\[
\bar z:=-\bar\lambda^{-1}\nabla_yF(\bar y,\bar b),
\qquad
Q:=\nabla_{yy}^2F(\bar y,\bar b),
\qquad
\mathcal L:=\operatorname{par}\partial R^*(A^T\bar z).
\]
Then $Q\succeq0$, and optimality gives $A^T\bar z\in\partial R(\bar x)$, equivalently, $\bar x\in\partial R^*(A^T\bar z)$. 

Applying \cite[Theorem~4.4]{CuiHoheiselNghiaSun2026}
directly to $S$ yields the following sufficient conditions
for local single-valued Lipschitz continuity.

\begin{condition}[Primal-space sufficient conditions]
\label{C-primal-stability}
\begin{align}
& R^*
\text{ is $\mathcal C^2$-cone reducible at }A^T\bar z,
\label{e-primal-structural}\\
& \ker(A^TQA)\cap\mathcal L=\{0\}.
\label{e-primal-ND}
\end{align}
\end{condition}

Applying the same result to $H$, as in
Theorem~\ref{L-H-IFT}, gives the corresponding
image-space sufficient conditions.

\begin{condition}[Image-space sufficient conditions]
\label{C-image-stability}
Condition~\eqref{QC-H} holds, and
\begin{align}
& R^*\circ A^T
\text{ is $\mathcal C^2$-cone reducible at }\bar z,
\label{e-image-structural}\\
& \ker Q\cap A\mathcal L=\{0\}.
\label{e-image-ND-comparison}
\end{align}
\end{condition}

We first compare the two nondegeneracy conditions. 

\begin{theorem}\label{T-primal-image-ND}
Let $A\in\mathbb R^{m\times n}$, $Q\in\mathbb R^{m\times m}$ be symmetric positive semidefinite,
and $\mathcal L\subseteq\mathbb R^n$ be a subspace. Then \eqref{e-primal-ND} holds if and only if \eqref{e-image-ND-comparison} holds and 
\begin{equation}\label{e-A-injectivity}
\ker A\cap\mathcal L=\{0\}.
\end{equation}
\end{theorem}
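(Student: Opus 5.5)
The argument rests on one linear-algebra fact: since $Q$ is symmetric positive semidefinite, $\ker(A^TQA)=\{x: QAx=0\}=A^{-1}(\ker Q)$. To see this, take $x$ with $A^TQAx=0$. Then $\langle Ax,QAx\rangle=\langle x,A^TQAx\rangle=0$. Writing $Q=Q^{1/2}Q^{1/2}$ gives $\|Q^{1/2}Ax\|^2=0$, hence $QAx=0$. The reverse inclusion is immediate. With this fact, \eqref{e-primal-ND} reads: every $x\in\mathcal L$ with $Ax\in\ker Q$ is zero.

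\emph{Necessity.} Assume \eqref{e-primal-ND}. If $x\in\ker A\cap\mathcal L$, then $Ax=0\in\ker Q$, so $x\in\ker(A^TQA)\cap\mathcal L=\{0\}$; this gives \eqref{e-A-injectivity}. Next, let $u\in\ker Q\cap A\mathcal L$ and write $u=Ax$ with $x\in\mathcal L$. Then $QAx=Qu=0$, so $x\in\ker(A^TQA)\cap\mathcal L$, which forces $x=0$ and hence $u=0$. This gives \eqref{e-image-ND-comparison}.

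\emph{Sufficiency.} Assume \eqref{e-image-ND-comparison} and \eqref{e-A-injectivity}, and let $x\in\ker(A^TQA)\cap\mathcal L$. Then $Ax\in\ker Q\cap A\mathcal L=\{0\}$, so $x\in\ker A\cap\mathcal L=\{0\}$.

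The only step that is not bookkeeping is the positive semidefiniteness argument identifying $\ker(A^TQA)$ with $A^{-1}(\ker Q)$. This is exactly where the hypothesis $Q\succeq0$ is used; without it the kernel of $A^TQA$ could be strictly larger than $A^{-1}(\ker Q)$, and the equivalence could fail. The rest is a direct chase of definitions.
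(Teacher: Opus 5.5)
Your proposal is correct and follows the paper's proof essentially step for step. Both reduce the statement, via $Q\succeq0$, to the identity $\ker(A^TQA)=\{d:Ad\in\ker Q\}$, and then check each direction directly from the definitions.
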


\begin{proof}
Since $Q\succeq0$,
\[
\langle A^TQAd,d\rangle
=
\|Q^{1/2}Ad\|^2,
\qquad
d\in\mathbb R^n,
\]
and hence
\[
\ker(A^TQA)
=
\{d\in\mathbb R^n:Ad\in\ker Q\}.
\]

Suppose first that
$\ker(A^TQA)\cap\mathcal L=\{0\}$.
If $d\in\ker A\cap\mathcal L$, then
$Ad=0\in\ker Q$, so the kernel identity gives $d=0$. Thus
\eqref{e-A-injectivity} holds. If
$\eta\in\ker Q\cap A\mathcal L$, write $\eta=Ad$ for some
$d\in\mathcal L$. Then
$d\in\ker(A^TQA)\cap\mathcal L$, so $d=0$ and hence $\eta=0$.
Thus \eqref{e-image-ND-comparison} holds.

Conversely, suppose that
$\ker Q\cap A\mathcal L=\{0\}$ and
$\ker A\cap\mathcal L=\{0\}$. If
$d\in\ker(A^TQA)\cap\mathcal L$, then
$Ad\in\ker Q\cap A\mathcal L$, so $Ad=0$. Hence
$d\in\ker A\cap\mathcal L=\{0\}$, proving \eqref{e-primal-ND}.
\end{proof}

The injectivity condition~\eqref{e-A-injectivity} also relates
the qualification and $\mathcal C^2$-cone reducibility
conditions in the primal and image spaces.

\begin{proposition}\label{P-primal-image-structural}
Let $R:\mathbb R^n\to(-\infty,+\infty]$ be proper,
closed, and convex, and suppose that
$A^T\bar z\in\partial R(\bar x)$ for some
$\bar x\in\mathbb R^n$ and $\bar z\in\mathbb R^m$.
Set $\mathcal L:=\operatorname{par}\partial R^*(A^T\bar z)$.
If \eqref{e-A-injectivity} holds, then \eqref{QC-H} holds.
If, in addition, \eqref{e-primal-structural} holds,
then \eqref{e-image-structural} also holds.
\end{proposition}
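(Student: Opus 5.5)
The plan has two parts, matching the two claims. Throughout, set $\bar v:=A^T\bar z\in\partial R(\bar x)$, so that $\bar x\in\partial R^*(\bar v)$, and the set $\partial R^*(\bar v)$ is nonempty, closed and convex. The basic tool is the recession-cone identity for subdifferentials: for a proper, closed, convex $f$ and $v\in\operatorname{dom}\partial f$, the recession cone of $\partial f(v)$ equals $N_{\operatorname{dom}f}(v)$. Since every recession direction $d$ of a nonempty convex set $C$ satisfies $x+td\in C$ for $x\in C$ and $t\geq 0$, it lies in $\operatorname{par}C$. Applying this with $f=R^*$ gives
\[
N_{\operatorname{dom}R^*}(\bar v)\subseteq \mathcal L .
\]

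\textbf{Qualification condition.} I argue by contradiction: suppose \eqref{QC-H} fails, so the subspace $\operatorname{rge}(A^T)$ (which is its own relative interior) does not meet $\operatorname{ri}(\operatorname{dom}R^*)$. By the proper separation theorem \cite[Theorem~11.3]{Rockafellar1970}, there is $w\neq0$ with
\[
\langle w,A^Tz\rangle\le\langle w,u\rangle
\quad\text{for all } z\in\mathbb R^m,\ u\in\operatorname{dom}R^*,
\]
and with $\langle w,\cdot\rangle$ not constant on $\operatorname{rge}(A^T)\cup\operatorname{dom}R^*$. A linear functional bounded above on a subspace vanishes there, so $Aw=0$ and $\langle w,u\rangle\ge0$ on $\operatorname{dom}R^*$. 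Since $\bar v\in\operatorname{rge}A^T\cap\operatorname{dom}R^*$, we have $\langle w,\bar v\rangle=0$. Hence $-w\in N_{\operatorname{dom}R^*}(\bar v)\subseteq\mathcal L$. Together with $-w\in\ker A$, this gives a nonzero element of $\ker A\cap\mathcal L$, contradicting \eqref{e-A-injectivity}.

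\textbf{Cone reducibility.} Let $(\mathcal U,W,\mathcal K,h)$ reduce $\operatorname{epi}R^*$ at $\bar u:=(\bar v,R^*(\bar v))$. Define the linear map $T(z,t):=(A^Tz,t)$ and set $g:=h\circ T$ on the neighborhood $T^{-1}(\mathcal U)$ of $(\bar z,R^*(\bar v))$. Then:
\begin{itemize}
\item $g$ is $\mathcal C^2$ and $g(\bar z,R^*(\bar v))=0$;
\item $\operatorname{epi}(R^*\circ A^T)=T^{-1}(\operatorname{epi}R^*)$, which locally equals $\{g\in\mathcal K\}$.
\end{itemize}
The only remaining requirement is surjectivity of $Dg=Dh(\bar u)\circ T$. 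This is equivalent to
\[
\operatorname{rge}Dh(\bar u)^*\cap\ker T^*=\{0\},
\qquad
\ker T^*=\ker A\times\{0\}.
\]
To identify $\operatorname{rge}Dh(\bar u)^*$, I use the standard fact that $N_{\operatorname{epi}R^*}(\bar u)=Dh(\bar u)^*\mathcal K^\circ$. Because $\mathcal K$ is pointed, $\mathcal K^\circ$ has nonempty interior and spans $W$. Hence $\operatorname{rge}Dh(\bar u)^*=\operatorname{span}N_{\operatorname{epi}R^*}(\bar u)$.

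\textbf{Main obstacle.} The key step is to show that every element of $\operatorname{span}N_{\operatorname{epi}R^*}(\bar u)$ of the form $(w,0)$ has $w\in\mathcal L$. The epigraphical normal cone is generated by the vectors $\lambda(u,-1)$ with $\lambda\ge0$ and $u\in\partial R^*(\bar v)$, together with the horizontal normals $(n,0)$, $n\in N_{\operatorname{dom}R^*}(\bar v)$ (this uses $\partial R^*(\bar v)\neq\emptyset$, so the cone is the closure of the cone generated by $\partial R^*(\bar v)\times\{-1\}$). In a difference of two such elements, vanishing of the last coordinate forces the two $\lambda$'s to be equal. Then $w$ lies in $(\partial R^*(\bar v)-\partial R^*(\bar v))+N_{\operatorname{dom}R^*}(\bar v)-N_{\operatorname{dom}R^*}(\bar v)$, which is contained in $\mathcal L$. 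I must handle the closure carefully. My plan is to note that $N_{\operatorname{epi}R^*}(\bar u)$ lies in the closed set $\{(\lambda u+n,-\lambda)\}$ described above, and that this description passes to spans and limits because $\mathcal L\times\mathbb R$ is a closed subspace.

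With this in hand, any $(w,0)\in\operatorname{rge}Dh(\bar u)^*\cap\ker T^*$ satisfies $w\in\ker A\cap\mathcal L=\{0\}$ by \eqref{e-A-injectivity}. So $Dg$ is surjective and \eqref{e-image-structural} follows.
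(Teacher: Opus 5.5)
Your proposal is correct and follows the paper's proof step for step: the inclusion $N_{\operatorname{dom}R^*}(\bar v)\subseteq\mathcal L$, proper separation of $\operatorname{rge}(A^T)$ from $\operatorname{dom}R^*$, the composed reduction $h\circ T$, and the identification $\operatorname{rge}Dh(\bar u)^*=\operatorname{span}N_{\operatorname{epi}R^*}(\bar u)$. The one difference is that you prove directly the inclusion $\operatorname{span}N_{\operatorname{epi}R^*}(\bar u)\cap(\mathbb R^n\times\{0\})\subseteq\mathcal L\times\{0\}$, which the paper takes from \cite[Proposition~2.1]{CuiHoheiselNghiaSun2026}.

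Your closure worry is unnecessary. The normal-cone inequality shows that $N_{\operatorname{epi}R^*}(\bar u)$ is exactly $\{(\lambda u+n,-\lambda):\lambda\ge0,\ u\in\partial R^*(\bar v),\ n\in N_{\operatorname{dom}R^*}(\bar v)\}$, and the span of a convex cone $N$ is $N-N$, so your difference computation already suffices. Your remark about $\mathcal L\times\mathbb R$ is inaccurate, though not needed: this cone need not lie in $\mathcal L\times\mathbb R$ (take $R^*=\frac12\|\cdot\|^2$ and $\bar v\neq0$). The correct closed subspace containing it is $(\mathcal L\times\{0\})+\mathbb R(\bar x,-1)$.
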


\begin{proof}
Since $A^T\bar z\in\partial R(\bar x)$,
we have $\bar x\in\partial R^*(A^T\bar z)$.
Moreover,
\begin{equation}\label{e-normal-L-inclusion}
N_{\operatorname{dom}R^*}(A^T\bar z)
\subseteq
\operatorname{par}\partial R^*(A^T\bar z)
=
\mathcal L.
\end{equation}
Indeed, for any
$d\in N_{\operatorname{dom}R^*}(A^T\bar z)$,
the subgradient and normal-cone inequalities imply
$\bar x+d\in\partial R^*(A^T\bar z)$.

Suppose that \eqref{QC-H} fails.
Since
$A^T\bar z\in\operatorname{rge}(A^T)\cap
\operatorname{dom}R^*$,
the proper separation theorem
\cite[Theorem~11.3]{Rockafellar1970} yields
\[
0\neq w\in
\operatorname{rge}(A^T)^\perp
\cap N_{\operatorname{dom}R^*}(A^T\bar z).
\]
By \eqref{e-normal-L-inclusion} and
$\operatorname{rge}(A^T)^\perp=\ker A$, $0\neq w\in\ker A\cap\mathcal L$,
contradicting \eqref{e-A-injectivity}.
Hence \eqref{QC-H} holds.

Assume also \eqref{e-primal-structural}, and set $\bar t:=R^*(A^T\bar z)$ and $\bar v:=(A^T\bar z,\bar t)$.
Then $\mathcal C^2$-cone reducibility yields a neighborhood $\mathcal U$ of $\bar v$, a pointed closed convex cone $\mathcal K$ in a Euclidean space $W$,
and a $\mathcal C^2$ map $h:\mathcal U\to W$ with $h(\bar v)=0$ and surjective $Dh(\bar v)$ such that 
\begin{equation}\label{e-reduction-Rstar}
\operatorname{epi}R^*\cap\mathcal U
=
\{(u,t)\in\mathcal U:h(u,t)\in\mathcal K\}.
\end{equation}

Define
$\mathcal A(z,t):=(A^Tz,t)$ and
$\widehat{\mathcal U}:=\mathcal A^{-1}(\mathcal U)$.
Since
$\operatorname{epi}(R^*\circ A^T)
=\mathcal A^{-1}(\operatorname{epi}R^*)$,
we obtain
\begin{equation}\label{e-reduction-composition}
\operatorname{epi}(R^*\circ A^T)
\cap\widehat{\mathcal U}
=
\{(z,t)\in\widehat{\mathcal U}:
(h\circ\mathcal A)(z,t)\in\mathcal K\}.
\end{equation}
It remains to verify that
$D(h\circ\mathcal A)(\bar z,\bar t)$ is surjective.

By \cite[Theorem~6.14]{RW1998},
\[
N_{\operatorname{epi}R^*}(\bar v)
=
Dh(\bar v)^*N_{\mathcal K}(0).
\]
Since $\mathcal K$ is pointed,
$N_{\mathcal K}(0)=\mathcal K^\circ$
is full-dimensional. Therefore,
\begin{equation}\label{e-range-reduction-adjoint}
\operatorname{rge}Dh(\bar v)^*
=
\operatorname{span}N_{\operatorname{epi}R^*}(\bar v).
\end{equation}
Furthermore,
\cite[Proposition~2.1]{CuiHoheiselNghiaSun2026}
gives
\[
\operatorname{span}N_{\operatorname{epi}R^*}(\bar v)
\cap(\mathbb R^n\times\{0\})
=
\mathcal L\times\{0\}.
\]
Since
$\mathcal A^*(d,\beta)=(Ad,\beta)$,
we have $\ker\mathcal A^*=\ker A\times\{0\}$.
Consequently,
\[
\ker\mathcal A^*
\cap\operatorname{rge}Dh(\bar v)^*
=
(\ker A\cap\mathcal L)\times\{0\}
=
\{0\},
\]
where the last equality follows from
\eqref{e-A-injectivity}.

As $Dh(\bar v)$ is surjective, its adjoint is
injective. The preceding identity therefore implies
that $\mathcal A^*Dh(\bar v)^*$ is injective.
Hence
$D(h\circ\mathcal A)(\bar z,\bar t)
=Dh(\bar v)\circ\mathcal A$
is surjective. Thus \eqref{e-reduction-composition} defines a
$\mathcal C^2$-cone reduction of
$\operatorname{epi}(R^*\circ A^T)$ at
$(\bar z,\bar t)$, proving \eqref{e-image-structural}.
\end{proof} 
    
Combining Theorem~\ref{T-primal-image-ND} and
Proposition~\ref{P-primal-image-structural} gives the
following relation between the two sets of sufficient conditions.

\begin{corollary}\label{T-primal-image-hierarchy}
Let $\bar p=(\bar b,\bar\lambda)\in\operatorname{dom}S$,
$\bar x\in S(\bar p)$, and $\bar y:=A\bar x$.
Suppose that $R$ is proper, closed, and convex,
that $F$ is twice continuously differentiable near
$(\bar y,\bar b)$, and that $F(\cdot,b)$ is convex
for every $b$ near $\bar b$.
Then Condition~\ref{C-primal-stability} implies
Condition~\ref{C-image-stability}.
\end{corollary}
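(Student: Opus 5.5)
The plan is to assemble Corollary~\ref{T-primal-image-hierarchy} from Theorem~\ref{T-primal-image-ND} and Proposition~\ref{P-primal-image-structural}, after checking that the standing hypotheses of both apply at the data $(\bar x,\bar z,Q,\mathcal L)$.

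First we fix the setup. By the discussion preceding Condition~\ref{C-primal-stability}, the assumptions give $Q=\nabla^2_{yy}F(\bar y,\bar b)\succeq0$. Here $Q$ is symmetric since $F$ is $\mathcal C^2$, and it is positive semidefinite since $F(\cdot,\bar b)$ is convex and $\mathcal C^2$ near $\bar y$. Optimality of $\bar x\in S(\bar p)$ yields $A^T\bar z\in\partial R(\bar x)$, so $\partial R^*(A^T\bar z)$ is nonempty and convex. Hence $\mathcal L=\operatorname{par}\partial R^*(A^T\bar z)$ is a well-defined subspace.

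To justify the optimality inclusion, we use Fermat's rule for the primal problem. Since $F(A\cdot,\bar b)$ is finite, convex and differentiable near $\bar x$, the sum rule applies without any qualification issue. It gives
\[
0\in A^T\nabla_yF(\bar y,\bar b)+\bar\lambda\,\partial R(\bar x),
\]
which is exactly $A^T\bar z\in\partial R(\bar x)$. This is the one point needing a little care. If $F$ were only differentiable locally, we would rely on the fact that a local minimizer of a convex function is global; here $F(A\cdot,\bar b)+\bar\lambda R$ is convex near $\bar x$ along segments, which suffices.

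Now assume Condition~\ref{C-primal-stability} holds, i.e., \eqref{e-primal-structural} and \eqref{e-primal-ND}. Applying the ``only if'' direction of Theorem~\ref{T-primal-image-ND} with this $A$, $Q$, $\mathcal L$ yields both \eqref{e-image-ND-comparison} and the injectivity condition \eqref{e-A-injectivity}.

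Next we feed \eqref{e-A-injectivity} into Proposition~\ref{P-primal-image-structural}. Its hypotheses are that $R$ is proper, closed, and convex, that $A^T\bar z\in\partial R(\bar x)$, and that $\mathcal L$ is defined as above, all of which are verified. The proposition gives \eqref{QC-H}, and, together with \eqref{e-primal-structural}, also \eqref{e-image-structural}.

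Collecting \eqref{QC-H}, \eqref{e-image-structural} and \eqref{e-image-ND-comparison} gives exactly Condition~\ref{C-image-stability}. No genuine obstacle is expected, since the only substantive work is contained in the two cited results.
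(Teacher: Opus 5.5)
Your proposal is correct and matches the paper's proof: Theorem~\ref{T-primal-image-ND} turns \eqref{e-primal-ND} into \eqref{e-image-ND-comparison} together with \eqref{e-A-injectivity}, and Proposition~\ref{P-primal-image-structural} then yields \eqref{QC-H} and, using \eqref{e-primal-structural}, also \eqref{e-image-structural}. Your extra checks that $Q\succeq0$ and $A^T\bar z\in\partial R(\bar x)$ are sound, though the hedge about local convexity along segments is unnecessary, since $F(\cdot,\bar b)$ is convex on all of $\mathbb R^m$ by hypothesis.
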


\begin{proof}
By Theorem~\ref{T-primal-image-ND},
\eqref{e-primal-ND} implies
\eqref{e-image-ND-comparison} and
\eqref{e-A-injectivity}.
Proposition~\ref{P-primal-image-structural},
together with \eqref{e-primal-structural},
then yields \eqref{QC-H} and
\eqref{e-image-structural}.
\end{proof}

Thus, the image-space sufficient conditions are weaker
than their primal-space counterparts in the sense that
the latter imply the former.

\begin{remark}
Condition~\eqref{e-A-injectivity} excludes nonzero
directions in $\mathcal L$ that lie in $\ker A$.
In Example~\ref{E-H-single-S-set},
$\mathcal L=\mathbb R^2$ and
$\ker A=\operatorname{span}\{(1,-1)\}$,
so this condition fails.
The kernel direction $(1,-1)$ allows distinct primal
solutions to share the same image, explaining why
$H$ is single-valued and Lipschitz continuous
while $S$ remains set-valued.
\end{remark}

\begin{remark}
Condition~\eqref{e-A-injectivity} also implies
$\Phi(\bar y)=\{\bar x\}$.
Indeed, for any $x\in\Phi(\bar y)$,
we have $Ax=A\bar x$ and $R(x)=R(\bar x)$.
Since $A^T\bar z\in\partial R(\bar x)$,
equality in the subgradient inequality gives
$A^T\bar z\in\partial R(x)$.
Hence
$x,\bar x\in\partial R^*(A^T\bar z)$,
so $x-\bar x\in\mathcal L$.
Together with $x-\bar x\in\ker A$,
condition~\eqref{e-A-injectivity} yields
$x=\bar x$. However, \eqref{e-A-injectivity} is not required
for the Hausdorff Lipschitz analysis below,
which allows $\Phi$ to remain set-valued.
\end{remark} 

	
\subsection{Polyhedral geometry and Lipschitz stability of the generalized basis pursuit mapping}

Throughout this subsection, suppose that $R$ is proper,
closed, and convex, and that
\[
A^{-1}(y)\cap\operatorname{ri}(\operatorname{dom}R)
\neq\emptyset,
\qquad
\forall\,y\in\operatorname{rge}A.
\]
By Proposition~\ref{P-Phi-geometry},
$\Phi(y)=A^{-1}(y)\cap C_R$ for
$y\in\operatorname{rge}A$.
We study the stability of $\Phi$ through these affine
slices, first when $C_R$ is convex polyhedral and then
when it is a finite union of convex polyhedral sets.

\begin{proposition}\label{P-Phi-poly}
Suppose that $C_R$ is convex polyhedral.
Then $\Phi$ is Hausdorff Lipschitz continuous relative
to $\operatorname{dom}\Phi$. Specifically, there exists
$L_\Phi>0$ such that
\[
d_H\bigl(\Phi(y_1),\Phi(y_2)\bigr)
\leq L_\Phi\|y_1-y_2\|,
\qquad
\forall\,y_1,y_2\in\operatorname{dom}\Phi.
\]
\end{proposition}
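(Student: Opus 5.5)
Since $C_R$ is convex polyhedral, write $C_R=\{x\in\mathbb R^n: Mx\le q\}$ for some matrix $M$ and vector $q$. By Proposition~\ref{P-Phi-geometry}, for $y\in\operatorname{rge}A$ we have
\[
\Phi(y)=\{x: Ax=y,\ Mx\le q\},
\]
so $\Phi$ is the feasible-set mapping of a linear system whose right-hand side depends linearly on the parameter $y$. The plan is to apply Hoffman's error bound to this system. Hoffman's lemma gives a constant $\kappa>0$, depending only on $(A,M)$, such that whenever $\{x:Ax=y,\ Mx\le q\}\neq\emptyset$,
\[
\operatorname{dist}\bigl(x,\{x':Ax'=y,\ Mx'\le q\}\bigr)
\le
\kappa\bigl(\|Ax-y\|+\|(Mx-q)_+\|\bigr)
\qquad\forall\,x\in\mathbb R^n.
\]

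Now fix $y_1,y_2\in\operatorname{dom}\Phi=A(C_R)$, so that both $\Phi(y_1)$ and $\Phi(y_2)$ are nonempty. Take any $x_1\in\Phi(y_1)$. Then $Mx_1\le q$, which makes the inequality residual vanish. The equality residual for the system at $y_2$ is
\[
\|Ax_1-y_2\|=\|y_1-y_2\|.
\]
Hoffman's bound therefore gives $\operatorname{dist}(x_1,\Phi(y_2))\le\kappa\|y_1-y_2\|$. Taking the supremum over $x_1\in\Phi(y_1)$ yields $e(\Phi(y_1),\Phi(y_2))\le\kappa\|y_1-y_2\|$. By symmetry the same bound holds for the reverse excess, so $L_\Phi:=\kappa$ works.

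The main point to check is that the Hoffman constant is uniform in the right-hand side. This is exactly the content of Hoffman's lemma: the constant depends only on the coefficient matrix $\begin{pmatrix}A\\ M\end{pmatrix}$ and not on $(y,q)$. It is also the reason nonemptiness is needed, which is why the estimate is stated relative to $\operatorname{dom}\Phi$.

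If one wants to avoid citing Hoffman directly, an alternative is to invoke Walkup--Wets or Robinson's theorem, which says that a polyhedral multifunction (one whose graph is a finite union of polyhedra) is outer Lipschitz. Here $\operatorname{gph}\Phi=\{(y,x):Ax=y,\ x\in C_R\}$ is a single convex polyhedron, and a mapping with convex polyhedral graph is Hausdorff Lipschitz on its domain. Either route is short, and the Hoffman route is the one I would present.
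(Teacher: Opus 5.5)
Your argument is correct, but it is not the paper's argument. The paper notes that, by Proposition~\ref{P-Phi-geometry}, $\operatorname{gph}\Phi=\{(y,x):Ax=y,\ x\in C_R\}$ is a convex polyhedron. It then cites the general fact that a mapping with convex polyhedral graph is Lipschitz continuous on its domain \cite[Example~9.35]{RW1998}. That is the alternative you sketch in your last paragraph.

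You instead prove the estimate directly from Hoffman's error bound. The key structural point is that $y$ enters only the right-hand side of the equality constraints, while the description $Mx\le q$ of $C_R$ is fixed. So the inequality residual vanishes at every $x_1\in\Phi(y_1)$, and only $\|y_1-y_2\|$ remains.

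\begin{itemize}
\item Your route is self-contained and gives an explicit modulus, the Hoffman constant of the system built from $A$ and $M$. It also shows clearly why nonemptiness, and hence the restriction to $\operatorname{dom}\Phi$, is needed.
\item The paper's route is a one-line reduction to a standard theorem. The paper reuses it for the branch mappings $\Phi_\ell$ in the finite-union setting. Your Hoffman argument also applies verbatim there, since each $C_\ell$ is convex polyhedral.
\end{itemize}

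One caution about your closing remark: Robinson's result for graphs that are finite unions of polyhedra gives only outer Lipschitz continuity. That controls one excess and would not suffice here. The Hausdorff estimate comes from the convex-polyhedral-graph (Walkup--Wets) statement.
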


\begin{proof}
By Proposition~\ref{P-Phi-geometry},
\[
\operatorname{gph}\Phi
=
\{(y,x):Ax=y,\ x\in C_R\}.
\]
Since $C_R$ is convex polyhedral, so is $\operatorname{gph}\Phi$, and the result follows from \cite[Example~9.35]{RW1998}. 
\end{proof} 

We next consider finite unions of convex polyhedral sets.
Although each branch is Hausdorff Lipschitz continuous relative
to its domain, the union need not be, since a branch may become
infeasible under arbitrarily small perturbations. Suppose that
\begin{equation}\label{CR-poly-union}
C_R=\bigcup_{\ell=1}^N C_\ell,
\end{equation}
where each $C_\ell$ is convex polyhedral. Define
$\Phi_\ell(y):=A^{-1}(y)\cap C_\ell$. Then
\[
\Phi(y)=\bigcup_{\ell=1}^N\Phi_\ell(y),
\qquad
\operatorname{dom}\Phi_\ell=A(C_\ell).
\]
Each $\Phi_\ell$ has a convex polyhedral graph and is
Hausdorff Lipschitz continuous relative to its domain
by the same argument as in Proposition~\ref{P-Phi-poly}.
Moreover, $\Phi$ is outer Lipschitz continuous relative
to $\operatorname{dom}\Phi$ with a uniform modulus,
since its graph is a finite union of convex polyhedral sets;
see \cite[Theorem~3D.1]{DontchevRockafellar2009}.

For closed-valued multifunctions on convex parameter sets,
uniform outer Lipschitz continuity and inner semicontinuity
together imply Hausdorff Lipschitz continuity
\cite[Theorem~3D.3]{DontchevRockafellar2009}.
To ensure inner semicontinuity, we introduce the following condition.

\begin{condition}[Branch persistence]
Let $D\subseteq\operatorname{dom}\Phi$ be convex.
Branch persistence holds on $D$ with respect to
\eqref{CR-poly-union} if, for every
$\bar y\in D$ and $\bar x\in\Phi(\bar y)$,
there exist $\ell\in\{1,\ldots,N\}$ and $\delta>0$ such that
\begin{equation}\label{e-branch-persistence}
\bar x\in\Phi_\ell(\bar y),
\qquad
D\cap B(\bar y,\delta)
\subseteq\operatorname{dom}\Phi_\ell.
\end{equation}
\end{condition}

Thus, every reference solution belongs to at least one
branch that remains feasible for all nearby image values in $D$.

\begin{proposition}\label{P-union-poly-HLip}
Suppose that $C_R$ admits a finite convex-polyhedral
representation~\eqref{CR-poly-union}.
Let $D\subseteq\operatorname{dom}\Phi$ be convex.
If branch persistence~\eqref{e-branch-persistence}
holds on $D$, then $\Phi$ is Hausdorff Lipschitz
continuous relative to $D$.
\end{proposition}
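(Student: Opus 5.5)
The plan is to invoke \cite[Theorem~3D.3]{DontchevRockafellar2009}: for closed-valued mappings on a convex set, uniform outer Lipschitz continuity combined with inner semicontinuity gives Hausdorff Lipschitz continuity. This means three properties of $\Phi$ restricted to $D$ must be checked.

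\emph{Closed values.} For $y\in D$, the set $\Phi(y)=\bigcup_\ell A^{-1}(y)\cap C_\ell$ is a finite union of closed polyhedral sets, so it is closed.

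\emph{Uniform outer Lipschitz continuity.} The graph $\operatorname{gph}\Phi=\bigcup_\ell\{(y,x):Ax=y,\ x\in C_\ell\}$ is a finite union of convex polyhedral sets. By \cite[Theorem~3D.1]{DontchevRockafellar2009}, $\Phi$ is therefore outer Lipschitz continuous relative to $\operatorname{dom}\Phi$ with a uniform modulus $\kappa$. Restricting the parameter to $D\subseteq\operatorname{dom}\Phi$ preserves this estimate.

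\emph{Inner semicontinuity.} This is the only place branch persistence enters. Fix $\bar y\in D$ and $\bar x\in\Phi(\bar y)$, and choose $\ell$ and $\delta$ as in \eqref{e-branch-persistence}. Take any sequence $y^k\to\bar y$ with $y^k\in D$. For large $k$ we have $y^k\in D\cap B(\bar y,\delta)\subseteq\operatorname{dom}\Phi_\ell$. The branch $\Phi_\ell$ has a convex polyhedral graph, so it is Hausdorff Lipschitz continuous on $\operatorname{dom}\Phi_\ell$ with some modulus $L_\ell$, by the argument of Proposition~\ref{P-Phi-poly} (\cite[Example~9.35]{RW1998}). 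Since $\Phi_\ell(y^k)$ is closed and nonempty, we can pick $x^k\in\Phi_\ell(y^k)\subseteq\Phi(y^k)$ with
\[
\|x^k-\bar x\|=\operatorname{dist}\bigl(\bar x,\Phi_\ell(y^k)\bigr)\le L_\ell\|y^k-\bar y\|\to0 .
\]
Hence $\bar x$ lies in the inner limit of $\Phi$ at $\bar y$ relative to $D$, so $\Phi$ is inner semicontinuous relative to $D$.

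\emph{Conclusion.} With these three properties, \cite[Theorem~3D.3]{DontchevRockafellar2009} yields $d_H(\Phi(y_1),\Phi(y_2))\le\kappa\|y_1-y_2\|$ for all $y_1,y_2\in D$.

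The main point needing care is matching the hypotheses of Theorem~3D.3 in the relative setting: the mapping is considered only on $D$, and the outer Lipschitz estimate comes from the whole domain. If the cited theorem is stated only for mappings on all of a convex set, I would apply it to the restriction $\Phi|_D$, whose graph is $\operatorname{gph}\Phi\cap(D\times\mathbb R^n)$. The outer estimate transfers directly to this restriction.

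If a self-contained argument is preferred, I would prove the conclusion by a segment argument. Take $y_1,y_2\in D$; by convexity the segment $[y_1,y_2]$ lies in $D$. Cover it by finitely many subintervals on which the local outer estimate with modulus $\kappa$ holds, using compactness of the segment. Then use inner semicontinuity to propagate each point of $\Phi(y_1)$ along the segment with total displacement at most $\kappa\|y_1-y_2\|$, and apply the same argument symmetrically. This propagation step is the technical core of Theorem~3D.3 and is where I expect the main effort to lie.
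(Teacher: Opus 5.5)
Your proposal is correct and follows the paper's proof essentially step for step. You use the polyhedral graph for closed values and a uniform outer Lipschitz modulus via \cite[Theorem~3D.1]{DontchevRockafellar2009}, branch persistence together with Hausdorff Lipschitz continuity of the single persistent branch $\Phi_\ell$ for inner semicontinuity relative to $D$, and then \cite[Theorem~3D.3]{DontchevRockafellar2009} on the convex set $D$.

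Your optional self-contained fallback is not needed, and a bare finite-cover argument would not suffice there anyway, because the outer radius $\delta$ depends on the base point. The standard proof is a sup/inf continuation along the segment, in which inner semicontinuity supplies the limiting step.
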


\begin{proof}
By the preceding discussion, $\Phi$ is uniformly outer
Lipschitz continuous relative to $D$ and is closed-valued.
It remains to establish inner semicontinuity.

Fix $\bar y\in D$, $\bar x\in\Phi(\bar y)$, and
$y^k\xrightarrow{D}\bar y$.
By branch persistence, there exists $\ell$ such that
$\bar x\in\Phi_\ell(\bar y)$ and
$y^k\in\operatorname{dom}\Phi_\ell$ for all sufficiently
large $k$. Since $\Phi_\ell$ is Hausdorff Lipschitz
continuous relative to its domain,
\[
\operatorname{dist}\bigl(\bar x,\Phi_\ell(y^k)\bigr)
\leq
d_H\bigl(\Phi_\ell(\bar y),\Phi_\ell(y^k)\bigr)
\longrightarrow0.
\]
Hence some $x^k\in\Phi_\ell(y^k)\subseteq\Phi(y^k)$ satisfy
$x^k\to\bar x$, proving inner semicontinuity relative to $D$. 

Since $D$ is convex, the conclusion follows from
\cite[Theorem~3D.3]{DontchevRockafellar2009}.
\end{proof}

Examples in Section~\ref{sec:branch-persistence-examples} of the Appendix
illustrate that branch disappearance can destroy Hausdorff
continuity, while branch persistence is sufficient but
not necessary for Hausdorff Lipschitz continuity. 

	
\subsection{Illustrative examples}\label{Representative regularizers}

We consider two classes of regularizers: structured piecewise
linear-quadratic regularizers, for which Hausdorff Lipschitz
stability follows under branch persistence, and $\ell_1$-type
regularizers, for which stability can be established without
this condition.


\subsubsection{Structured PLQ regularizers}

General convex piecewise linear-quadratic (PLQ) regularizers
need not satisfy the $\mathcal C^2$-cone reducibility condition
in Theorem~\ref{L-H-IFT}; see
\cite[Example~5.9]{SmulanskyHoheiselNghia2026}.
We therefore impose additional structure to ensure this property.

Let
$P=\{v\in\mathbb R^n:\langle p_i,v\rangle\leq\beta_i,\
i=1,\ldots,N\}$
be a nonempty convex polyhedral set,
$\Sigma\in\mathbb R^{n\times n}$ be symmetric positive
semidefinite, and $L:\mathbb R^n\to\mathbb R^n$
be a linear isomorphism. Following
\cite[Definition~5.10]{SmulanskyHoheiselNghia2026},
define
\begin{equation}\label{e-PLQ-RP}
R_P(x)
:=
\theta_{P,\Sigma}(Lx)
:=
\sup_{v\in P}
\left\{
\langle Lx,v\rangle
-\frac12\langle\Sigma v,v\rangle
\right\}.
\end{equation}
Since $R_P$ is proper, closed, convex, and PLQ \cite[Example~11.18]{RW1998},
$\partial R_P$ is piecewise polyhedral, with $\operatorname{gph}\partial R_P$ a finite union of convex polyhedral sets $G_1,\ldots,G_M\subseteq\mathbb R^n\times\mathbb R^n$: 
\begin{equation}\label{e-PLQ-subdiff-decomposition}
\operatorname{gph}\partial R_P
=
\bigcup_{\ell=1}^M G_\ell.
\end{equation}

Recall that $K=\ker A$ and
$K^\perp=\operatorname{rge}(A^T)$. Define
\begin{equation}\label{e-PLQ-branch}
C_\ell
:=
\operatorname{proj}_x
\left(
G_\ell\cap(\mathbb R^n\times K^\perp)
\right),
\qquad \ell=1,\ldots,M.
\end{equation}
Each $C_\ell$ is convex polyhedral, and
\begin{equation}\label{e-PLQ-CR-decomposition}
C_{R_P}
=
(\partial R_P)^{-1}(K^\perp)
=
\bigcup_{\ell=1}^M C_\ell.
\end{equation}

Finally, let $P^\infty$ denote the recession cone of $P$.
The domain of $R_P$ is
\begin{equation}\label{e-PLQ-domain}
\operatorname{dom}R_P
=
L^{-1}\left((P^\infty\cap\ker\Sigma)^\circ\right).
\end{equation}
Indeed, this follows from $\operatorname{dom}(\frac12\langle\Sigma\cdot,\cdot\rangle+\delta_P)^*
=\operatorname{rge}\Sigma+(P^\infty)^\circ=(P^\infty\cap\ker\Sigma)^\circ$.

\begin{proposition}\label{T-structured-PLQ}
Let $R_P$ be defined by \eqref{e-PLQ-RP}, and let
$S_P$, $H_P$, and $\Phi_P$ denote the corresponding
solution, image, and generalized basis pursuit mappings.
Fix $\bar p=(\bar b,\bar\lambda)\in\operatorname{dom}S_P$
and $\bar x\in S_P(\bar p)$. Set
\[
\bar y:=A\bar x,\qquad
\bar z:=-\bar\lambda^{-1}\nabla_yF(\bar y,\bar b),
\qquad
\bar v:=L^{-T}A^T\bar z,
\]
and let $I(\bar v):=\{i:\langle p_i,\bar v\rangle=\beta_i\}$.
Suppose that:
\begin{enumerate}[label=(\roman*)]
\item $F$ is $\mathcal C^2$ near $(\bar y,\bar b)$, and $F(\cdot,b)$ is convex for all $b$ near $\bar b$; 

\item
\begin{align}
\operatorname{rge}(A^T)\cap L^T\operatorname{ri}P&\neq\emptyset,
\label{e-PLQ-dual-QC}\\
\ker\nabla_{yy}^2F(\bar y,\bar b)\cap AL^{-1}\operatorname{span}\{p_i:i\in I(\bar v)\}&=\{0\}; 
\label{e-PLQ-image-nondeg}
\end{align} 

\item
\begin{equation}\label{e-PLQ-fiber-QC}
AL^{-1}\bigl((P^\infty\cap\ker\Sigma)^\circ\bigr)
=
\operatorname{rge}A;
\end{equation}

\item there exist a neighborhood $U$ of $\bar p$ and a convex set $D\subseteq\operatorname{dom}\Phi_P$ such that
$H_P(p)\subseteq D$ for all $p\in U\cap\operatorname{dom}S_P$, and branch persistence~\eqref{e-branch-persistence} holds on $D$ for \eqref{e-PLQ-CR-decomposition}.
\end{enumerate}
Then $S_P$ is locally Hausdorff Lipschitz continuous
at $\bar p$ relative to $\operatorname{dom}S_P$.
\end{proposition}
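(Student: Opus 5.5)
The plan is to apply the composition rule of Proposition~\ref{P-transfer} to $S_P=\Phi_P\circ H_P$, with $G_1=H_P$ and $G_2=\Phi_P$. First I will show that $H_P$ is single-valued and Lipschitz near $\bar p$, using Theorem~\ref{L-H-IFT}. Next I will show that $\Phi_P$ is Hausdorff Lipschitz relative to $D$, using Propositions~\ref{P-Phi-geometry} and~\ref{P-union-poly-HLip}. Assumption (iv) supplies exactly the containment $H_P(p)\subseteq D\subseteq\operatorname{dom}\Phi_P$ for $p\in U\cap\operatorname{dom}S_P$ that Proposition~\ref{P-transfer} needs.

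\textbf{The image mapping.} I must verify the hypotheses of Theorem~\ref{L-H-IFT} for $R=R_P$.

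\emph{Conjugate.} Since $R_P=\theta_{P,\Sigma}\circ L$ with $L$ invertible, $R_P^*(u)=\theta_{P,\Sigma}^*(L^{-T}u)$. Here $\theta_{P,\Sigma}^*=\frac12\langle\Sigma\cdot,\cdot\rangle+\delta_P$, because this function is closed, proper and convex. Hence
\[
\operatorname{dom}R_P^*=L^TP,
\qquad
\operatorname{ri}\operatorname{dom}R_P^*=L^T\operatorname{ri}P,
\]
so \eqref{e-PLQ-dual-QC} is exactly \eqref{QC-H}.

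\emph{Cone reducibility.} The composition $R_P^*\circ A^T$ equals $q(z)+\delta_P(L^{-T}A^Tz)$, with $q$ quadratic. Its epigraph is locally given, near $(\bar z,\bar t)$, by the finitely many active linear inequalities $\langle p_i,L^{-T}A^Tz\rangle\le\beta_i$, $i\in I(\bar v)$, together with the smooth inequality $q(z)\le t$. Such a set is $\mathcal C^2$-cone reducible. I would invoke the structured PLQ result behind \cite[Definition~5.10]{SmulanskyHoheiselNghia2026}, or argue directly via reduction to a polyhedral cone after factoring out the lineality space, since polyhedral sets are cone reducible.

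\emph{Nondegeneracy.} I need $\mathcal L=\operatorname{par}\partial R_P^*(A^T\bar z)$. Since $\partial R_P^*(u)=L^{-1}\bigl(\Sigma L^{-T}u+N_P(L^{-T}u)\bigr)$, we get
\[
\mathcal L=L^{-1}\operatorname{par}N_P(\bar v)\subseteq L^{-1}\operatorname{span}\{p_i:i\in I(\bar v)\}.
\]
Hence \eqref{e-PLQ-image-nondeg} implies \eqref{C-H-nondegeneracy}. Theorem~\ref{L-H-IFT} then gives a neighborhood, which I shrink $U$ to, on which $H_P$ is single-valued with Lipschitz modulus $L_1$.

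\textbf{The basis pursuit mapping.} I must check \eqref{QC-Phi}, namely $A^{-1}(y)\cap\operatorname{ri}\operatorname{dom}R_P\neq\emptyset$ for every $y\in\operatorname{rge}A$. By \eqref{e-PLQ-domain}, $\operatorname{dom}R_P$ is the polyhedral cone $L^{-1}\bigl((P^\infty\cap\ker\Sigma)^\circ\bigr)$. By \cite[Theorem~6.6]{Rockafellar1970},
\[
A\operatorname{ri}(\operatorname{dom}R_P)=\operatorname{ri}A(\operatorname{dom}R_P)=\operatorname{ri}\operatorname{rge}A=\operatorname{rge}A,
\]
where the middle equality uses \eqref{e-PLQ-fiber-QC}. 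This gives \eqref{QC-Phi}, so Proposition~\ref{P-Phi-geometry} yields $\Phi_P(y)=A^{-1}(y)\cap C_{R_P}$.

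By \eqref{e-PLQ-CR-decomposition}, $C_{R_P}$ is a finite union of the convex polyhedral sets $C_\ell$. Each $C_\ell$ is polyhedral because it is a linear projection of the polyhedral set $G_\ell\cap(\mathbb R^n\times K^\perp)$. Branch persistence on the convex set $D$ is given in (iv), so Proposition~\ref{P-union-poly-HLip} shows that $\Phi_P$ is Hausdorff Lipschitz relative to $D$ with some modulus $L_2$.

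Proposition~\ref{P-transfer} then gives $d_H(S_P(p_1),S_P(p_2))\le L_2L_1\|p_1-p_2\|$ on $U\cap\operatorname{dom}S_P$.

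I expect the main obstacle to be the $\mathcal C^2$-cone reducibility of $R_P^*\circ A^T$ at $\bar z$. The quadratic term couples with $t$, and the active constraints after composing with $A^T$ may be linearly dependent. So the reduction map $h$ has to be built on a complement of the lineality space of the active cone, with the $t$-component handled through $t-q(z)$, in order to keep $Dh$ surjective. I would first try to cite the structured PLQ reducibility result of \cite{SmulanskyHoheiselNghia2026}, noting that precomposition with the linear map $L^{-T}A^T$ preserves reducibility of such an epigraph because the set is locally polyhedral up to a smooth change of variables in $t$.
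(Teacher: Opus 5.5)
Your proposal is correct and follows the paper's proof essentially step for step. You use Theorem~\ref{L-H-IFT} for $H_P$ after checking \eqref{QC-H}, cone reducibility and nondegeneracy; you derive \eqref{QC-Phi} from \eqref{e-PLQ-fiber-QC} via $A\operatorname{ri}(\operatorname{dom}R_P)=\operatorname{ri}A(\operatorname{dom}R_P)$; then you apply Proposition~\ref{P-union-poly-HLip} on $D$ and conclude with Proposition~\ref{P-transfer}. The only differences are minor: your inclusion $\mathcal L\subseteq L^{-1}\operatorname{span}\{p_i:i\in I(\bar v)\}$ (the paper states equality) is all that is needed, and your sketch of the cone reduction (factoring out the lineality space of the active cone and taking $t-q(z)$ as the last component) spells out what the paper asserts in one line.
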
 

\begin{proof}
We first verify the conditions for $H_P$.
Since $L$ is a linear isomorphism,
\[
R_P^*(w)
=
\frac12\langle
\Sigma L^{-T}w,L^{-T}w
\rangle
+
\delta_P(L^{-T}w).
\]
Thus
$\operatorname{ri}(\operatorname{dom}R_P^*)
=L^T\operatorname{ri}P$, so
\eqref{e-PLQ-dual-QC} is condition~\eqref{QC-H}.

The first-order optimality condition gives
$A^T\bar z\in\partial R_P(\bar x)$,
and hence $\bar v\in P$. Moreover,
\[
(R_P^*\circ A^T)(z)
=
\frac12\langle
\Sigma L^{-T}A^Tz,L^{-T}A^Tz
\rangle
+
\delta_{D_P}(z),
\]
where $D_P:=\{z:L^{-T}A^Tz\in P\}$.
Since $D_P$ is convex polyhedral,
$R_P^*\circ A^T$ is a convex quadratic function
on a polyhedral domain and is therefore
$\mathcal C^2$-cone reducible at $\bar z$. Furthermore,
\[
\partial R_P^*(A^T\bar z)
=
L^{-1}\bigl(\Sigma\bar v+N_P(\bar v)\bigr),
\qquad
N_P(\bar v)
=
\operatorname{cone}\{p_i:i\in I(\bar v)\}.
\]
Consequently,
\[
\operatorname{par}\partial R_P^*(A^T\bar z)
=
L^{-1}\operatorname{span}\{p_i:i\in I(\bar v)\}.
\]
Thus \eqref{e-PLQ-image-nondeg} satisfies the
nondegeneracy condition in Theorem~\ref{L-H-IFT}.
Hence $H_P$ is locally single-valued and Lipschitz continuous. 

Next, by \eqref{e-PLQ-domain},
condition~\eqref{e-PLQ-fiber-QC} is equivalent to
$A(\operatorname{dom}R_P)=\operatorname{rge}A$.
Since $\operatorname{dom}R_P$ is convex and relative
interiors commute with linear mappings,
\[
A\bigl(\operatorname{ri}(\operatorname{dom}R_P)\bigr)
=
\operatorname{ri}\bigl(A(\operatorname{dom}R_P)\bigr)
=
\operatorname{rge}A.
\]
Hence the fiber qualification in
Proposition~\ref{P-Phi-geometry} holds, giving
$\Phi_P(y)=A^{-1}(y)\cap C_{R_P}$.

By \eqref{e-PLQ-CR-decomposition}, assumption~(iv),
and Proposition~\ref{P-union-poly-HLip},
$\Phi_P$ is Hausdorff Lipschitz continuous relative
to $D$. Since $H_P(p)\subseteq D$ for
$p\in U\cap\operatorname{dom}S_P$, the conclusion
follows from Proposition~\ref{P-transfer} applied
to $S_P=\Phi_P\circ H_P$.
\end{proof} 

\begin{remark}\label{rem-PLQ-verification}
The conditions in Proposition~\ref{T-structured-PLQ}
admit concrete sufficient tests. If
$P^\infty\cap\ker\Sigma=\{0\}$, then
$\operatorname{dom}R_P=\mathbb R^n$, so
\eqref{e-PLQ-fiber-QC} holds automatically.
This includes the cases where $P$ is bounded or
$\Sigma$ is positive definite.

The polyhedral pieces $G_\ell$ can be constructed
from the faces of $P$ using
\[
w\in\partial R_P(x)
\Longleftrightarrow
v:=L^{-T}w\in P,\quad
Lx-\Sigma v\in N_P(v).
\]
Since $N_P(v)$ is constant on the relative interior
of each face, this characterization yields a finite
polyhedral decomposition of $\operatorname{gph}\partial R_P$.
The branches $C_\ell$ and their domains $A(C_\ell)$
can then be obtained by polyhedral projection.

Branch persistence on $D$ can be verified by checking
that, for every $\bar y\in D$ and
$\bar x\in\Phi_P(\bar y)$, some branch $C_\ell$
containing $\bar x$ satisfies
$D\cap B(\bar y,\delta)\subseteq A(C_\ell)$
for some $\delta>0$.
In particular, it suffices that $A(C_\ell)$ contain
a relative neighborhood of $\bar y$ in $D$.
\end{remark}

\begin{remark}\label{R-piecewise-l1-relation}
The linear isomorphism assumption on $L$ ensures
the explicit conjugate representation of $R_P$
and the $\mathcal C^2$-cone reducibility of
$R_P^*\circ A^T$. The latter property may fail
without this assumption; see Section~\ref{A-PLQ-nonreducible} of the Appendix.

The structured PLQ class includes $\ell_1$-type
regularizers. For $E\in\mathbb R^{q\times n}$,
let $B_\infty:=\{\xi\in\mathbb R^q:\|\xi\|_\infty\leq1\}$.
Taking $\Sigma=0$, $L=I_n$, and $P=E^TB_\infty$
gives
\[
R_P(x)
=
\sigma_{E^TB_\infty}(x)
=
\sup_{\|\xi\|_\infty\leq1}\langle Ex,\xi\rangle
=
\|Ex\|_1.
\]
For this subclass, Hausdorff Lipschitz continuity
of $\Phi_E$ can be established without branch
persistence~\eqref{e-branch-persistence}, as shown below. 
\end{remark} 
	

\subsubsection{\texorpdfstring{$\ell_1$}{l1}-type regularizers}\label{l1-type regularizers}

Consider the $\ell_1$-type regularizer
\begin{equation}\label{e-l1-type-R}
R_E(x):=\|Ex\|_1,
\qquad E\in\mathbb R^{q\times n}.
\end{equation}
This includes generalized LASSO regularization
\cite{TibshiraniTaylor2011} and reduces to $\|x\|_1$ when $E=I_n$. 

\begin{proposition}\label{P-l1-type}
Let $R_E$ be defined by \eqref{e-l1-type-R}, and let
$S_E$, $H_E$, and $\Phi_E$ denote the corresponding
solution, image, and generalized basis pursuit mappings.
Fix $\bar p=(\bar b,\bar\lambda)\in\operatorname{dom}S_E$
and $\bar x\in S_E(\bar p)$. Set
\[
\bar y:=A\bar x,\qquad
\bar z:=-\bar\lambda^{-1}\nabla_yF(\bar y,\bar b).
\]
Suppose that $F$ is $\mathcal C^2$ near
$(\bar y,\bar b)$ and $F(\cdot,b)$ is convex
for all $b$ near $\bar b$. Define
\[
B_\infty:=\{\xi\in\mathbb R^q:\|\xi\|_\infty\leq1\},
\qquad
\mathcal L_E:=
\operatorname{par}N_{E^TB_\infty}(A^T\bar z).
\]
If
\begin{equation}\label{e-E-image-nondeg}
\ker\nabla_{yy}^2F(\bar y,\bar b)
\cap A\mathcal L_E=\{0\},
\end{equation}
then $S_E$ is locally Hausdorff Lipschitz continuous
at $\bar p$ relative to $\operatorname{dom}S_E$.
\end{proposition}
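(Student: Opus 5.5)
We plan to use the composite representation $S_E=\Phi_E\circ H_E$ of Theorem~\ref{T-decomposition} together with the composition rule, Proposition~\ref{P-transfer}. This requires two ingredients: local single-valued Lipschitz continuity of $H_E$ near $\bar p$, and global Hausdorff Lipschitz continuity of $\Phi_E$ relative to $\operatorname{dom}\Phi_E$. With these, take $G_1=H_E$, $G_2=\Phi_E$ and $V=\operatorname{dom}\Phi_E$. Note that $H_E(p)\subseteq\operatorname{dom}\Phi_E$ holds automatically, since every $y\in H_E(p)$ is the image of a primal minimizer, and Theorem~\ref{T-decomposition} then gives $\Phi_E(y)\neq\emptyset$.

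\emph{Step 1: the image mapping.} We view $R_E$ as the structured PLQ function $R_P$ with $\Sigma=0$, $L=I_n$ and $P=E^TB_\infty$, as in Remark~\ref{R-piecewise-l1-relation}. Then $R_E^*=\delta_{E^TB_\infty}$, so $R_E^*\circ A^T=\delta_{D_E}$ with $D_E:=\{z:A^Tz\in E^TB_\infty\}$, which is a polyhedral set. Hence $R_E^*\circ A^T$ is $\mathcal C^2$-cone reducible at $\bar z$.

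Condition~\eqref{QC-H} holds because $0\in\operatorname{rge}(A^T)$ and $0\in\operatorname{ri}(E^TB_\infty)$; the latter is true since $E^TB_\infty$ is a symmetric convex set. Moreover,
\[
\partial R_E^*(A^T\bar z)=N_{E^TB_\infty}(A^T\bar z).
\]
This is a nonempty cone, since optimality gives $A^T\bar z\in\partial R_E(\bar x)$. Its parallel subspace is therefore $\mathcal L_E$. Consequently, \eqref{e-E-image-nondeg} is exactly the nondegeneracy condition \eqref{C-H-nondegeneracy}, and Theorem~\ref{L-H-IFT} yields a neighborhood $U$ of $\bar p$ on which $H_E$ is single-valued and Lipschitz with some modulus $L_1$.

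\emph{Step 2: the basis pursuit mapping, which is the main obstacle.} Since $R_E$ is finite-valued, \eqref{QC-Phi} holds and Proposition~\ref{P-Phi-geometry} gives $\Phi_E(y)=A^{-1}(y)\cap C_{R_E}$. Here $C_{R_E}$ is only a finite union of polyhedral sets, and we want to avoid branch persistence. The plan is instead to prove directly that $\operatorname{gph}\Phi_E$ is itself convex polyhedral and then invoke \cite[Example~9.35]{RW1998}, as in Proposition~\ref{P-Phi-poly}.

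To this end, we exploit the positive homogeneity of $R_E$. By strong LP duality, which applies because $R_E$ is polyhedral and finite,
\[
V_E(y)=\min_{Ax=y}\|Ex\|_1=\max\{\langle z,y\rangle:A^Tz\in E^TB_\infty\}=\sigma_{D_E}(y),
\]
and $V_E$ is finite on $\operatorname{rge}A$. It follows that
\[
\operatorname{gph}\Phi_E=\{(y,x):Ax=y,\ \|Ex\|_1\le\sigma_{D_E}(y)\},
\]
since $\ge$ holds automatically on the fiber. The inequality $\|Ex\|_1\le\sigma_{D_E}(Ax)$ is not obviously convex in $(x,y)$, because $\sigma_{D_E}$ is convex and sits on the wrong side. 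We therefore use the dual certificate instead. Given any $z\in D_E$, weak duality gives $\|Ex\|_1\ge\langle A^Tz,x\rangle=\langle z,y\rangle$, so $x\in\Phi_E(y)$ if and only if there exists $z\in D_E$ with $\|Ex\|_1=\langle z,Ax\rangle$.

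This description still couples $x$ and $z$ bilinearly. The fix we would try first is the face structure of $D_E$. Equality $\|Ex\|_1=\langle A^Tz,x\rangle$ with $A^Tz=E^T\xi$ and $\|\xi\|_\infty\le1$ forces $\xi_j=\operatorname{sgn}((Ex)_j)$ whenever $(Ex)_j\neq0$. Thus membership in $\Phi_E(y)$ is determined by the sign pattern of $Ex$ together with feasibility of a certificate $z$.

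If this shows that $\Phi_E$ is only piecewise polyhedral, the fallback is as follows. We use uniform outer Lipschitz continuity, which holds because the graph is a finite union of polyhedra \cite[Theorem~3D.1]{DontchevRockafellar2009}, and combine it with inner semicontinuity. Inner semicontinuity we would prove from the homogeneity of $\Phi_E$ together with the fact that $\operatorname{dom}\Phi_E=\operatorname{rge}A$ is a subspace. Concretely, given $\bar x\in\Phi_E(\bar y)$ and $y^k\to\bar y$, we write $y^k=\bar y+(y^k-\bar y)$ and pick the competitor $\bar x+A^\dagger(y^k-\bar y)$. We then project this competitor onto $\Phi_E(y^k)$ using the outer Lipschitz estimate together with the Lipschitz continuity of $V_E$, noting that $\|E(\cdot)\|_1$ differs from $V_E(y^k)$ by $O(\|y^k-\bar y\|)$ and applying a Hoffman-type error bound for the polyhedral sublevel set on the fiber. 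Once inner semicontinuity is established, \cite[Theorem~3D.3]{DontchevRockafellar2009} on the convex set $\operatorname{rge}A$ gives a global modulus $L_2$.

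\emph{Step 3: conclusion.} Proposition~\ref{P-transfer} with $G_1=H_E$ (modulus $L_1$) and $G_2=\Phi_E$ (modulus $L_2$) concludes the proof, with modulus $L_2L_1$ on $U\cap\operatorname{dom}S_E$.
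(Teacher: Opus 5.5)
Your Steps~1 and~3 coincide with the paper's proof; the route differs in Step~2. The paper introduces $u=Ex$ and factors $\Phi_E=\Psi\circ\widehat\Gamma_E$. Here $\Gamma_E$ is the $\ell_1$ basis pursuit mapping $y\mapsto\arg\min_u\{\|u\|_1:C_uu=-C_yy\}$ and $\Psi$ is an affine fiber map. The paper imports Hausdorff Lipschitz continuity of $\Gamma_E$ from \cite[Theorem~3.1]{meng2024lip} and composes via Proposition~\ref{P-transfer}.

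Drop your primary plan, because it cannot work: $\operatorname{gph}\Phi_E$ is in general not convex. For $A=(1,1)$ and $E=I_2$, the points $(1,(1,0))$ and $(-1,(0,-1))$ lie in the graph, but their midpoint does not, since $\Phi_E(0)=\{0\}$. Your fallback is sound. In the projection step, however, it is the Hoffman bound, not the outer Lipschitz estimate, that does the work. Its constant $\kappa$ is independent of $k$ because
\[
\Phi_E(y)=\bigl\{x:\ Ax=y,\ \langle\sigma,Ex\rangle\le V_E(y)\ \text{for all }\sigma\in\{\pm1\}^q\bigr\}
\]
is cut out by a fixed coefficient matrix, with only the right-hand side depending on $y$. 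Take $c_E>0$ with $\|EA^\dagger w\|_1\le c_E\|w\|$. Sublinearity of $V_E$ then gives $|V_E(y_1)-V_E(y_2)|\le c_E\|y_1-y_2\|$ on $\operatorname{rge}A$. For $x_1\in\Phi_E(y_1)$, the competitor $x_1+A^\dagger(y_2-y_1)$ yields
\[
\operatorname{dist}\bigl(x_1,\Phi_E(y_2)\bigr)\le\bigl(\|A^\dagger\|+2\kappa c_E\bigr)\|y_1-y_2\|,
\qquad y_1,y_2\in\operatorname{rge}A.
\]
This is already the global Hausdorff Lipschitz estimate, so \cite[Theorems~3D.1 and~3D.3]{DontchevRockafellar2009} become superfluous.

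As for what each route buys: the paper's argument is short once the external $\ell_1$ result is granted, and it makes the structure explicit ($\ell_1$ minimization in $u$ plus affine fibers). Yours is self-contained from Hoffman's lemma and gives an explicit modulus. It also uses only that $\Phi_E(y)$ is described by a fixed linear system whose right-hand side is Lipschitz in $y$, so it should carry over to other polyhedral regularizers without branch persistence.
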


\begin{proof}
By Proposition~\ref{P-transfer}, it suffices to
establish local single-valued Lipschitz continuity
of $H_E$ and Hausdorff Lipschitz continuity of
$\Phi_E$ relative to its domain.

\medskip
\noindent\textbf{Step 1. Stability of $H_E$.}
Since $R_E=\sigma_{E^TB_\infty}$, we have
$R_E^*=\delta_{E^TB_\infty}$. Moreover,
\[
0\in E^T(\operatorname{ri}B_\infty)
=\operatorname{ri}(E^TB_\infty)
=\operatorname{ri}(\operatorname{dom}R_E^*),
\]
which verifies \eqref{QC-H}.

Furthermore,
\[
R_E^*\circ A^T=\delta_{D_E},
\qquad
D_E:=\{z:A^Tz\in E^TB_\infty\}.
\]
Since $D_E$ is convex polyhedral,
$R_E^*\circ A^T$ is $\mathcal C^2$-cone reducible
at $\bar z$; see
\cite[Example~3.139]{BonnansShapiro2000}.

The first-order optimality condition gives
$A^T\bar z\in\partial R_E(\bar x)$, and hence
\[
\operatorname{par}\partial R_E^*(A^T\bar z)
=
\operatorname{par}N_{E^TB_\infty}(A^T\bar z)
=
\mathcal L_E.
\]
Thus \eqref{e-E-image-nondeg} is the nondegeneracy
condition in Theorem~\ref{L-H-IFT}.
Consequently, $H_E$ is locally single-valued
and Lipschitz continuous.

\medskip
\noindent\textbf{Step 2. Stability of $\Phi_E$.}
Define $Gx:=(Ax,Ex)$ and
$\mathcal V:=\operatorname{rge}G$. For
$y\in\operatorname{dom}\Phi_E$, let
\[
\Gamma_E(y)
:=
\arg\min_u\{\|u\|_1:(y,u)\in\mathcal V\},
\]
and define
$\Psi(y,u):=\{x:Ax=y,\ Ex=u\}$
for $(y,u)\in\mathcal V$. Then
\begin{equation}\label{e-PhiE-decomp}
\Phi_E(y)
=
\bigcup_{u\in\Gamma_E(y)}\Psi(y,u).
\end{equation}

To characterize $\Gamma_E$, choose a matrix
$C=(C_y,C_u)$ whose rows form a basis of
$\mathcal V^\perp$, with
$C_y\in\mathbb R^{r\times m}$,
$C_u\in\mathbb R^{r\times q}$, and
$r=\dim\mathcal V^\perp$.
Since $\mathcal V=\ker C$,
\[
\Gamma_E(y)
=
\arg\min_u\{\|u\|_1:C_uu=-C_yy\}.
\]
For $y\in\operatorname{dom}\Phi_E$,
$-C_yy\in\operatorname{rge}C_u$.
By \cite[Theorem~3.1]{meng2024lip},
$\Gamma_E$ is Hausdorff Lipschitz continuous
relative to $\operatorname{dom}\Phi_E$,
with some modulus $L_\Gamma>0$.

Next, let $K_G:=\ker G$ and
$B_G:=G|_{K_G^\perp}:K_G^\perp\to\mathcal V$.
Since $B_G$ is a linear isomorphism,
\[
\Psi(y,u)=B_G^{-1}(y,u)+K_G.
\]
For $(y_i,u_i)\in\mathcal V$, $i=1,2$, we have
$d_H(\Psi(y_1,u_1),\Psi(y_2,u_2))\leq L_\Psi\|(y_1-y_2,u_1-u_2)\|$, where $L_\Psi:=\|B_G^{-1}\|$. 

Define
$\widehat\Gamma_E(y)
:=\{(y,u):u\in\Gamma_E(y)\}$.
The Hausdorff Lipschitz estimate for $\Gamma_E$
implies
\[
d_H\bigl(
\widehat\Gamma_E(y_1),
\widehat\Gamma_E(y_2)
\bigr)
\leq
\sqrt{1+L_\Gamma^2}\,\|y_1-y_2\|.
\]
Since
$\Phi_E=\Psi\circ\widehat\Gamma_E$,
Proposition~\ref{P-transfer} yields
\[
d_H\bigl(\Phi_E(y_1),\Phi_E(y_2)\bigr)
\leq
L_\Psi\sqrt{1+L_\Gamma^2}\,
\|y_1-y_2\|,
\]
for all $y_1,y_2\in\operatorname{dom}\Phi_E$.
Hence $\Phi_E$ is Hausdorff Lipschitz continuous
relative to its domain.

Finally, applying Proposition~\ref{P-transfer}
to $S_E=\Phi_E\circ H_E$ completes the proof.
\end{proof} 

\begin{remark}\label{R-l1-no-persistence}
Proposition~\ref{P-l1-type} establishes Hausdorff Lipschitz continuity of $\Phi_E$ without branch persistence. Introducing $u=Ex$ decomposes $\Phi_E$ into an $\ell_1$-minimization mapping $\Gamma_E$ and an affine fiber mapping $\Psi$. Their Hausdorff Lipschitz estimates yield stability of $\Phi_E$ via Proposition~\ref{P-transfer}, without a polyhedral decomposition of $C_{R_E}$. 
\end{remark}

\begin{corollary}\label{cor-l1}
Under the assumptions of Proposition~\ref{P-l1-type} with $E=I_n$,
the solution mapping $S$ for $R(x)=\|x\|_1$ is locally
Hausdorff Lipschitz continuous at $\bar p$ relative to $\operatorname{dom}S$.
\end{corollary}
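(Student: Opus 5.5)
This is a direct specialization of Proposition~\ref{P-l1-type}, so the plan is to instantiate it at $E=I_n$ and check that every ingredient persists. With $E=I_n$ we have $q=n$ and $R_E(x)=\|x\|_1$. The mappings $S_E$, $H_E$, $\Phi_E$ coincide with $S$, $H$, $\Phi$ for the classical $\ell_1$ regularizer. The assumptions are that $F$ is $\mathcal C^2$ near $(\bar y,\bar b)$, that $F(\cdot,b)$ is convex for $b$ near $\bar b$, and that the nondegeneracy condition \eqref{e-E-image-nondeg} holds with $\mathcal L_E=\operatorname{par}N_{B_\infty}(A^T\bar z)$. These are exactly what Proposition~\ref{P-l1-type} requires, so its conclusion gives local Hausdorff Lipschitz continuity of $S$ at $\bar p$ relative to $\operatorname{dom}S$.

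For readability, I would also record what the hypotheses become in this case. Here $E^TB_\infty=B_\infty$, so $R^*=\delta_{B_\infty}$ and $D_E=\{z:\|A^Tz\|_\infty\le1\}$. Let $J:=\{i:|(A^T\bar z)_i|=1\}$. The normal cone $N_{B_\infty}(A^T\bar z)$ consists of vectors supported on $J$ with signs given by $\operatorname{sgn}((A^T\bar z)_i)$. Hence $\mathcal L_E=\{d:d_{J^c}=0\}$ and $A\mathcal L_E=\operatorname{rge}A_J$. Condition \eqref{e-E-image-nondeg} therefore reads
\[
\ker\nabla_{yy}^2F(\bar y,\bar b)\cap\operatorname{rge}A_J=\{0\}.
\]
This is an equicorrelation-type condition, consistent with the result of \cite{HuYaoZhang2025} that we aim to recover.

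The only point needing care is Step~2 of Proposition~\ref{P-l1-type} when $E=I_n$. There $G=(A,I_n)$ is injective, so $K_G=\{0\}$ and $\Psi$ is single-valued. Moreover, $\Gamma_E$ reduces to the basis pursuit mapping $y\mapsto\arg\min\{\|u\|_1:Au=y\}$, which is $\Phi$ itself. Its Hausdorff Lipschitz continuity relative to its domain, here $\operatorname{rge}A$, is supplied directly by \cite[Theorem~3.1]{meng2024lip}. Nothing new therefore needs proving. The specialization also confirms that no sign-pattern decomposition enters the argument, which is what the corollary is meant to highlight.
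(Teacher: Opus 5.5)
Your proposal is correct and matches the paper's proof, which simply applies Proposition~\ref{P-l1-type} with $E=I_n$. Your additional observations are accurate but not needed, since the proposition already holds for every $E$: namely that $\mathcal L_E=\{d:d_{J^c}=0\}$, that $\Psi$ is single-valued because $G=(A,I_n)$ is injective, and that $\Gamma_E$ coincides with $\Phi$ on $\operatorname{rge}A$.
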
 

\begin{proof}
Apply Proposition~\ref{P-l1-type} with $E=I_n$.
\end{proof}

\begin{remark}\label{R-l1-weak-strong}
In the setting of Corollary~\ref{cor-l1}, let
$J:=\{i\in[n]:|(A^T\bar z)_i|=1\}$.
Since $R^*=\delta_{B_\infty}$, we have
\[
\mathcal L
=
\operatorname{par}\partial R^*(A^T\bar z)
=
\{d\in\mathbb R^n:d_{J^c}=0\},
\qquad
A\mathcal L=\operatorname{rge}(A_J).
\]
Hence condition~\eqref{e-E-image-nondeg} becomes
\[
\ker\nabla_{yy}^2F(\bar y,\bar b)
\cap\operatorname{rge}(A_J)=\{0\},
\]
while \eqref{e-A-injectivity} reduces to $\ker A_J=\{0\}$.
By Theorem~\ref{T-primal-image-ND}, primal-space nondegeneracy is equivalent to image-space nondegeneracy together with this full-column-rank condition. 

When $F(y,b)=\varphi(y-b)$, these conditions correspond
to the weak and strong conditions in
\cite{HuYaoZhang2025}, respectively.
The former allows Hausdorff Lipschitz stability of
set-valued solution mappings, while the latter also
imposes the rank condition associated with local
single-valued Lipschitz stability.
Thus the composite representation permits Hausdorff
Lipschitz stability without requiring uniqueness
within the fibers. Corollary~\ref{cor-l1} recovers
the corresponding result in
\cite[Theorem~2(ii)]{HuYaoZhang2025}
without the finite sign-pattern decomposition
used there.
\end{remark} 

	
\section{Numerical experiments}
\label{sec4}

We illustrate how the composite representation can be used with
existing optimization methods to recover complete solution sets.
When the reduced image problem has a unique minimizer
$H(b,\lambda)=\{y^*\}$, the representation
$S(b,\lambda)=\Phi(y^*)$ allows recovery of the complete
solution set from the optimal image and its associated fiber. 

We consider two-dimensional examples with $\ell_1$ regularization and a nonpolyhedral graph of the generalized basis pursuit mapping. A three-dimensional extension and a sparse-observation total variation (TV) reconstruction experiment are provided in Sections~\ref{app:three_dimensional} and~\ref{app:tv_numerical} of the Appendix, respectively. The latter illustrates dimensional reduction from $257$ primal variables to $17$ image variables, complete solution-set recovery, and the effect of regularization on nonuniqueness. 

Throughout, we set $b=1$ and use three
data-fidelity functions: quartic, quadratic, and pseudo-Huber,
\[
F_1(y,b)=\frac14(y-b)^4,
\qquad
F_2(y,b)=\frac12(y-b)^2,
\]
and
\[
F_3(y,b)
=
\delta\sqrt{1+\delta^2}
\left(
\sqrt{1+\left(\frac{y-b}{\delta}\right)^2}-1
\right),
\qquad \delta=0.2.
\]
All three satisfy $|\nabla_yF_j(0,1)|=1$, so the reduced
problems with regularizer $|y|$ share the threshold
$H_j(1,\lambda)=\{0\}$ for $\lambda\geq1$. 


\subsection{Example with the \texorpdfstring{$\ell_1$}{l1} regularizer}

Consider $R(x)=\|x\|_1$ and
$A=(1,1)\in\mathbb R^{1\times2}$. Since
$V(y)=\min_{x_1+x_2=y}\|x\|_1=|y|$, the reduced
image problems are
\[
H_j(1,\lambda)
=
\arg\min_{y\in\mathbb R}
\{F_j(y,1)+\lambda|y|\},
\qquad j=1,2,3.
\]
Each problem has a unique minimizer $y_j^*(\lambda)$
by strict convexity. The first-order optimality
condition gives $H_j(1,\lambda)=\{y_j^*(\lambda)\}$,
where
\[
y_1^*(\lambda)
=
\begin{cases}
1-\lambda^{1/3}, & 0<\lambda<1,\\
0, & \lambda\geq1,
\end{cases}
\qquad
y_2^*(\lambda)
=
\begin{cases}
1-\lambda, & 0<\lambda<1,\\
0, & \lambda\geq1,
\end{cases}
\]
and
\[
y_3^*(\lambda)
=
\begin{cases}
\displaystyle
1-\frac{\delta\lambda}
{\sqrt{1+\delta^2-\lambda^2}},
& 0<\lambda<1,\\[2mm]
0, & \lambda\geq1,
\end{cases}
\qquad \delta=0.2.
\]
Since $\nabla_{yy}^2F_j(y_j^*(\lambda),1)>0$ for all $\lambda>0$,
Proposition~\ref{P-l1-type} ensures that $H_j$ is locally
single-valued and Lipschitz continuous near $(1,\lambda)$.
Figure~\ref{fig:num_H_Phi_2d}(a) compares the explicit paths
with interior-point (IP) solutions; see
Section~\ref{app:image space} of the Appendix. 

Although $H_j$ is single-valued, the primal solution
mapping is set-valued for $0<\lambda<1$ because
$\ker A=\operatorname{span}\{(1,-1)\}$.
For $y\geq0$, minimizing $\|x\|_1$ subject to
$x_1+x_2=y$ gives
\[
\Phi(y)
=
\{x\in\mathbb R_+^2:x_1+x_2=y\}.
\]
Equivalently, with $d=x_1-x_2$, the fiber satisfies
$-y\leq d\leq y$. Thus $\Phi(y)$ is a line segment
for $y>0$ and reduces to $\{0\}$ at $y=0$;
see Figure~\ref{fig:num_H_Phi_2d}(b).

\begin{figure}[ht]
    \centering
    \begin{subfigure}[t]{0.60\textwidth}
        \centering
        \includegraphics[width=\textwidth]{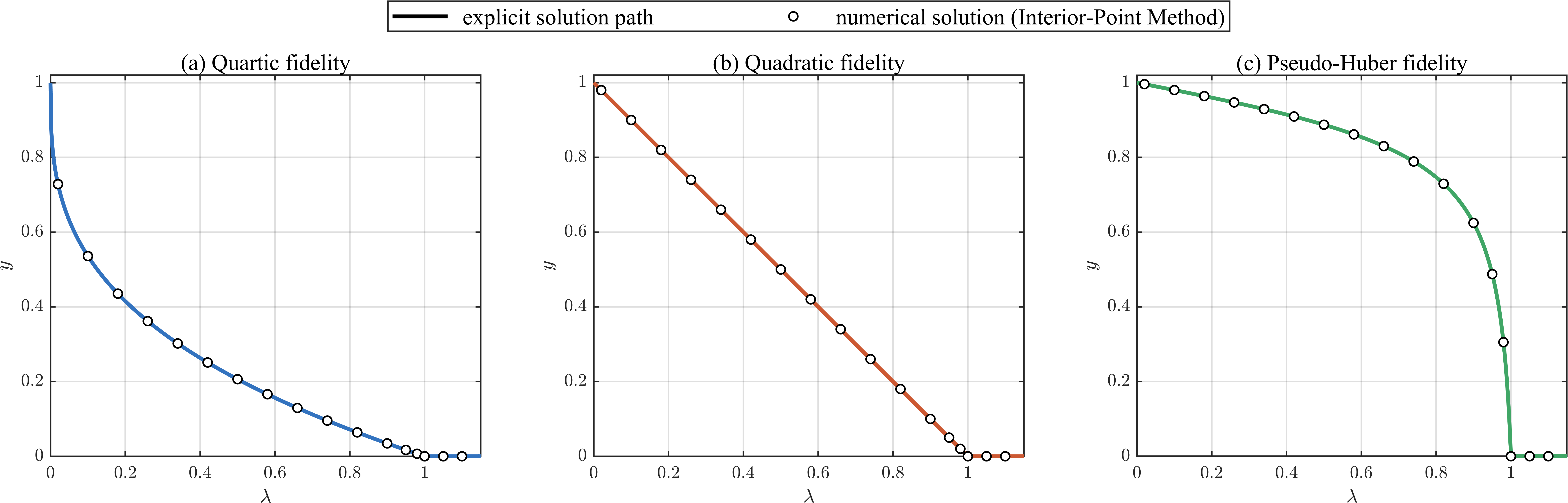}
        \caption{}
        \label{fig:num_H}
    \end{subfigure}
    \hspace{0.05\textwidth}
    \begin{subfigure}[t]{0.105\textwidth}
        \centering
        \includegraphics[width=\textwidth]{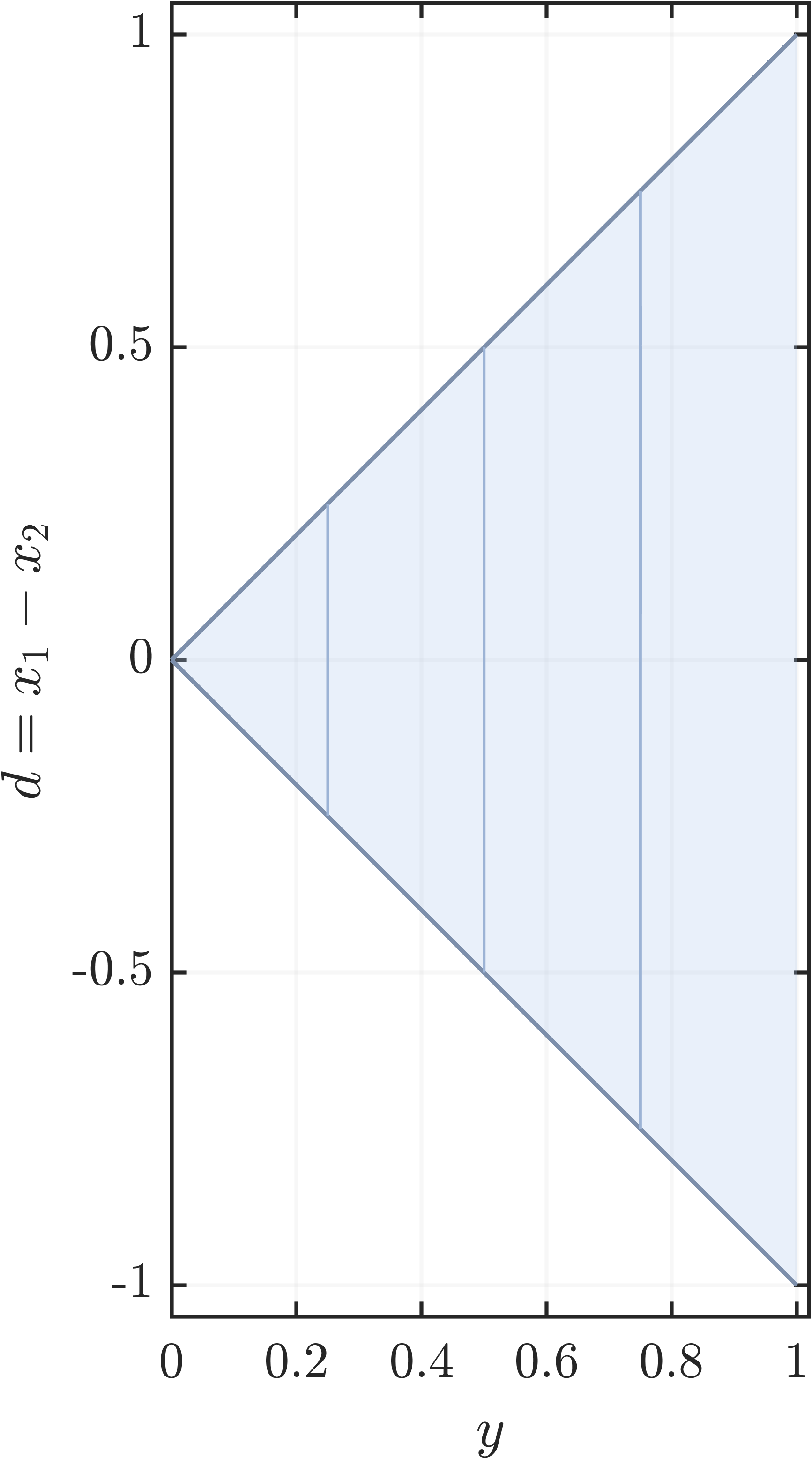}
        \caption{}
        \label{fig:num_Phi_2d}
    \end{subfigure}
    \hspace{0.05\textwidth}
    \begin{subfigure}[t]{0.144\textwidth}
        \centering
        \includegraphics[width=\textwidth]{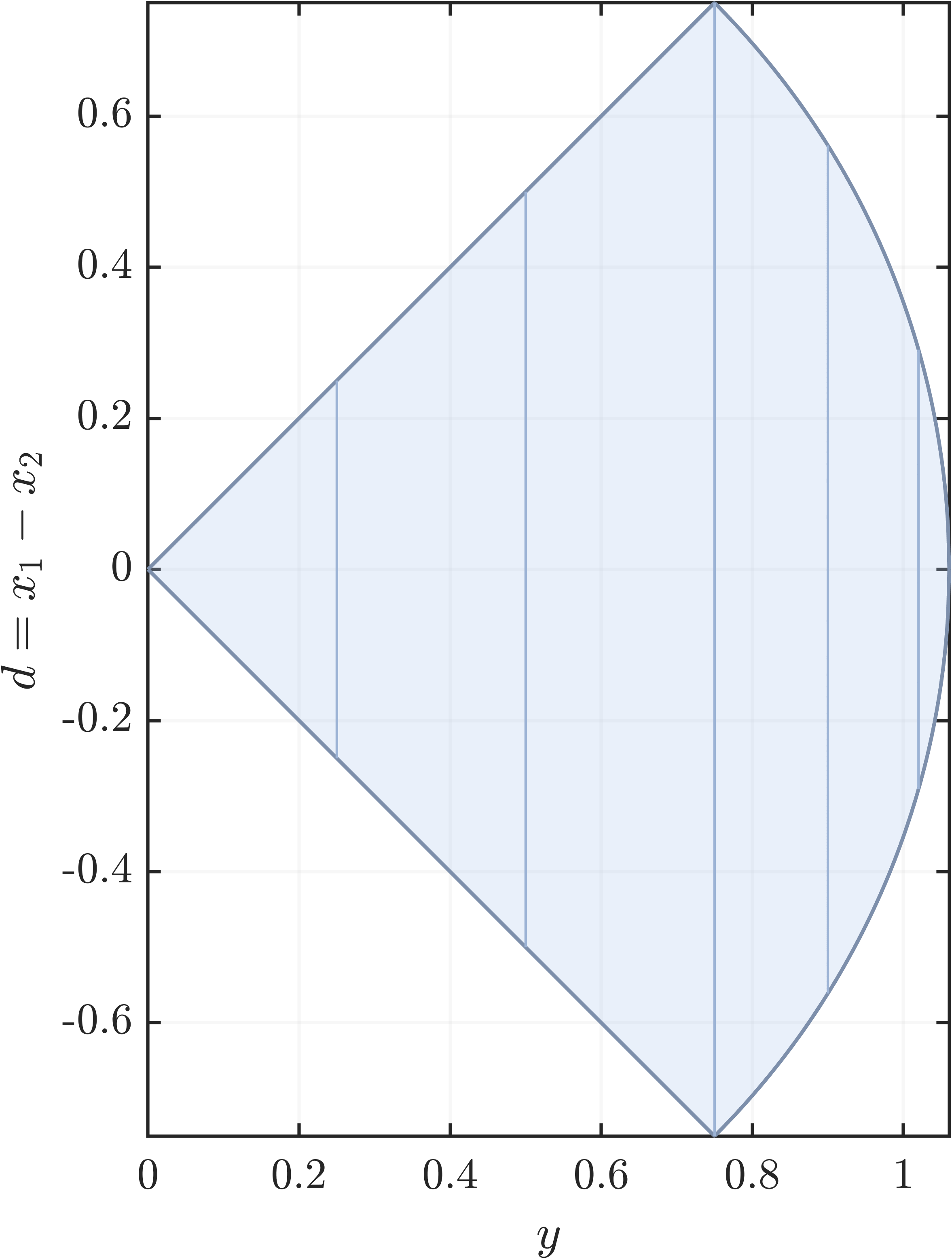}
        \caption{}
        \label{fig:num_Phi_2d_nonpoly}
    \end{subfigure}
    \caption{Components of the composite representation.
(a) Optimal image paths $H$ for the three data-fidelity functions and corresponding numerical solutions.
(b) Polyhedral graph of $\Phi$ for the $\ell_1$ regularizer.
(c) Nonpolyhedral graph of $\Phi$ for the regularizer in Section~\ref{example_nonpolyhedral}.} 
    \label{fig:num_H_Phi_2d}
\end{figure}

The composite representation therefore yields
\[
S_j(1,\lambda)
=
\Phi\bigl(y_j^*(\lambda)\bigr)
=
\begin{cases}
\{x\in\mathbb R_+^2:x_1+x_2=y_j^*(\lambda)\},
& 0<\lambda<1,\\
\{0\}, & \lambda\geq1,
\end{cases}
\qquad j=1,2,3.
\]
By Corollary~\ref{cor-l1}, these set-valued mappings are locally Hausdorff Lipschitz continuous. 

We also solve the original problem using FISTA, IP, and ADMM from ten diverse initializations, independently for each $\lambda$. Figure~\ref{fig:num_S_direct_2d} compares the complete solution sets with direct numerical selections. Solver settings and additional initialization results appear in Sections~\ref{app:solver} and~\ref{app:second initialization} of the Appendix. 

\begin{figure}[h]
    \centering
    \includegraphics[width=0.68\textwidth]{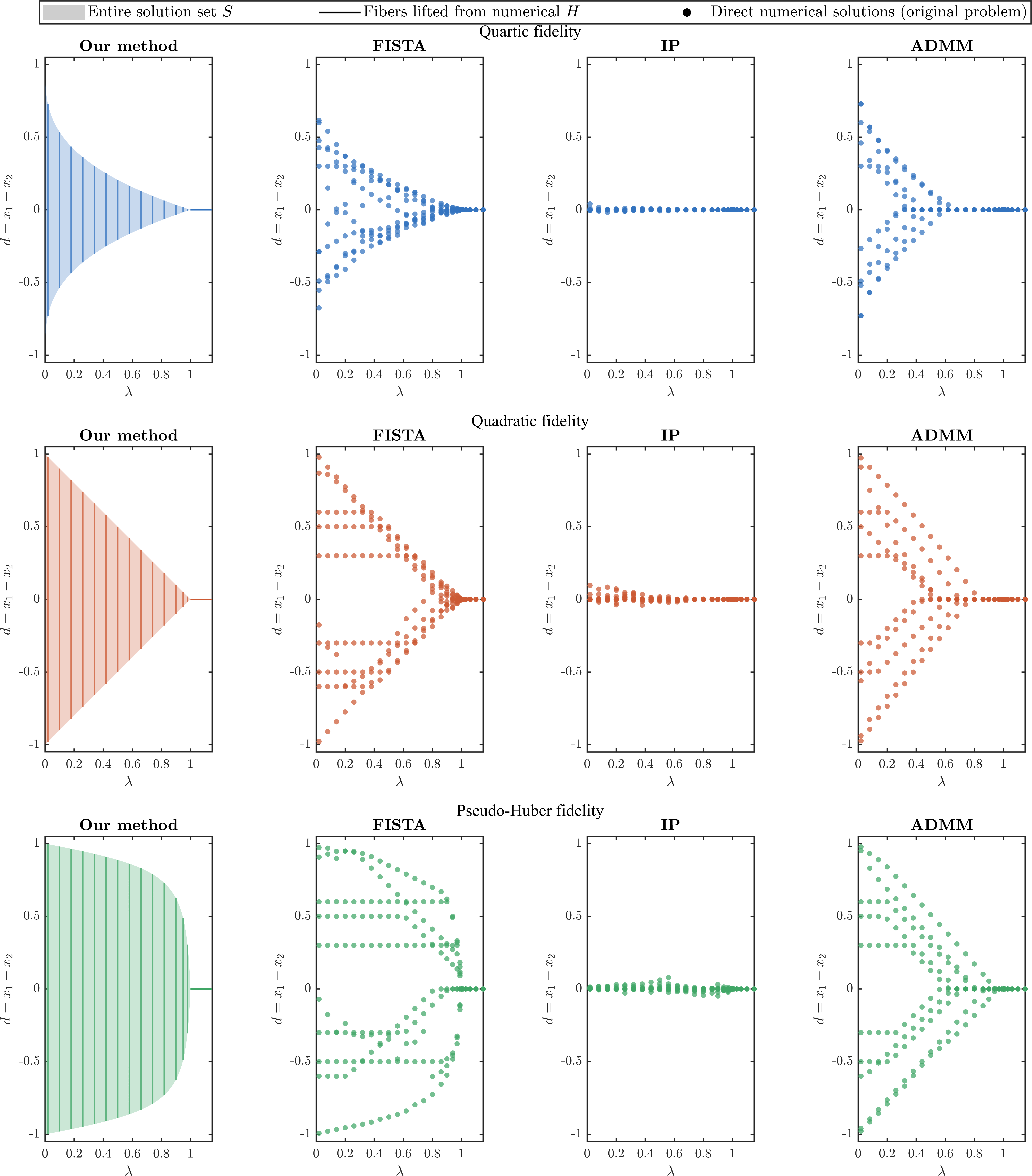}
\caption{Composite representation and direct numerical solutions
in the $\ell_1$-regularized example. Shaded regions show the
complete solution sets, colored fibers are recovered from numerical
optimal images, and points denote solutions returned by FISTA,
IP, and ADMM applied directly to the original problem.} 
    \label{fig:num_S_direct_2d}
\end{figure} 


\subsection{Example with nonpolyhedral \texorpdfstring{$\operatorname{gph}\Phi$}{gph Phi}}
\label{example_nonpolyhedral}

We next consider a two-dimensional example with nonpolyhedral
$\operatorname{gph}\Phi$:
\[
\min_{\|x\|_2\leq r}
\{F_j(Ax,1)+\lambda\|x\|_1\},
\qquad j=1,2,3.
\]
Related models combining sparsity penalties with Euclidean-norm
constraints arise in sparse principal component analysis
\cite{journee2010generalized}. This problem has the
form~\eqref{P1} with
$A=(1,1)$, $R(x)=\|x\|_1+\delta_{rB_2}(x)$, and $r=0.75$,
where $B_2:=\{x\in\mathbb R^2:\|x\|_2\leq1\}$.

The generalized basis pursuit value function is
\[
V(y)
=
\begin{cases}
|y|, & |y|\leq\sqrt{2}\,r,\\
+\infty, & |y|>\sqrt{2}\,r.
\end{cases}
\]
Indeed, $\|x\|_1\geq|x_1+x_2|=|y|$, with equality
attained at $x=(y/2,y/2)$ whenever $|y|\leq\sqrt{2}\,r$.
Since $\sqrt{2}\,r>1$ and $0\leq y_j^*(\lambda)\leq1$,
the optimal image paths coincide with those of the preceding
$\ell_1$ example; see Figure~\ref{fig:num_H_Phi_2d}(a).

\begin{figure}[h]
    \centering
    \includegraphics[width=0.70\textwidth]{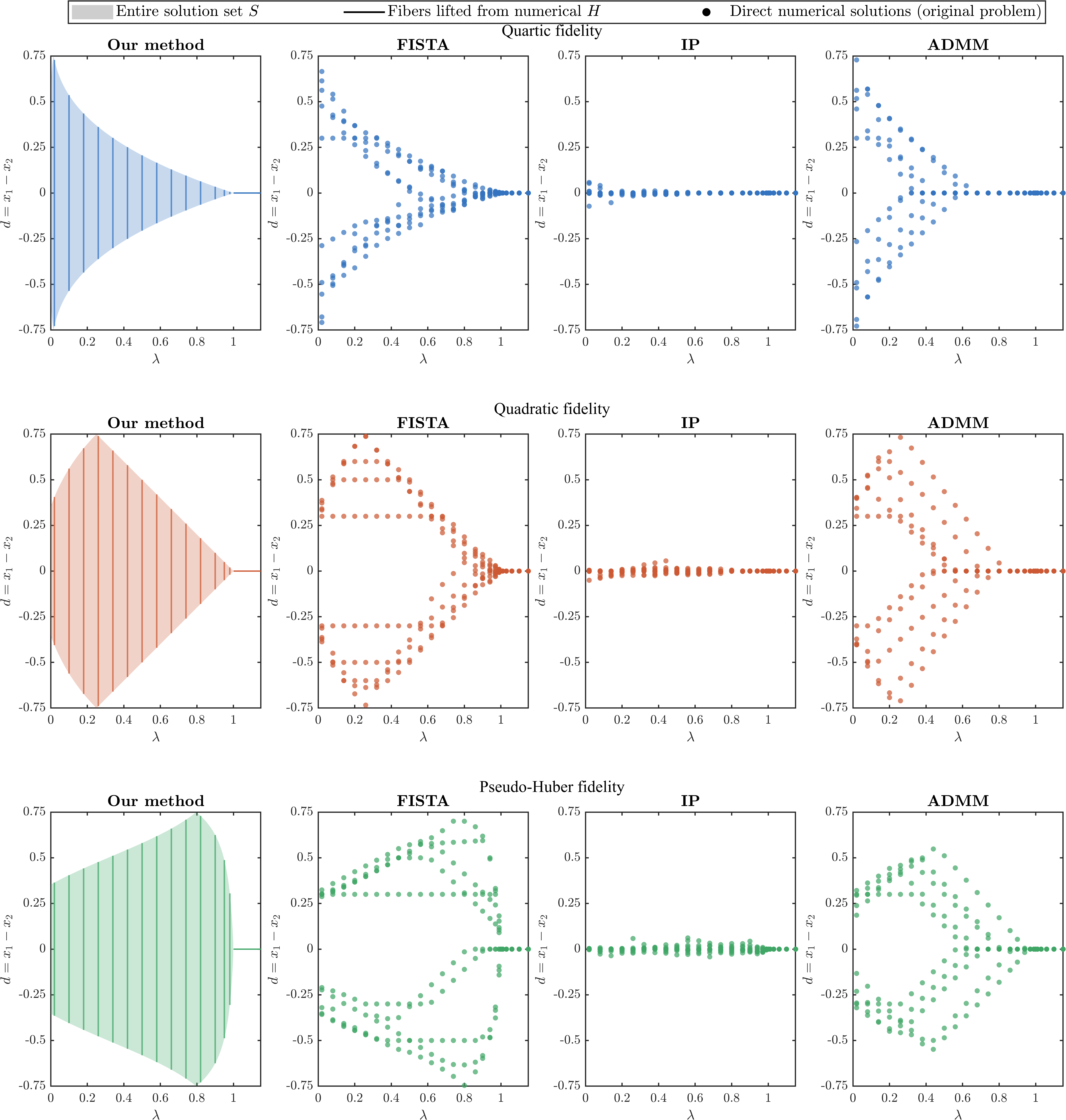}
    \caption{Composite representation and direct numerical solutions
in the nonpolyhedral example. Shaded regions represent the
complete solution sets, colored fibers are recovered from
numerically computed optimal images via the composite
representation, and points denote solutions obtained by
applying FISTA, IP, and ADMM directly to the original problem.} 
    \label{fig:num_S_direct_2d_nonpoly}
\end{figure}

The difference lies in the geometry of $\Phi$.
For fixed $y=x_1+x_2$, parameterize the affine fiber by $d=x_1-x_2$, giving
$x_1=(y+d)/2$ and $x_2=(y-d)/2$. 
For $y\geq0$, the minimum-$\ell_1$ condition gives
$|d|\leq y$, while the ball constraint gives
$|d|\leq\sqrt{2r^2-y^2}$. Thus the fibers satisfy
\[
|d|
\leq
\begin{cases}
y, & 0\leq y\leq r,\\
\sqrt{2r^2-y^2}, & r<y\leq\sqrt{2}\,r.
\end{cases}
\]
The boundary of $\operatorname{gph}\Phi$ therefore contains
linear segments and circular arcs, making it nonpolyhedral;
see Figure~\ref{fig:num_H_Phi_2d}(c).

The complete solution mappings are given by
$S_j(1,\lambda)=\Phi(y_j^*(\lambda))$, $j=1,2,3$.
Using the same numerical settings as in the preceding example,
we compare these mappings with direct FISTA, IP, and ADMM
solutions in Figure~\ref{fig:num_S_direct_2d_nonpoly}.
Additional results appear in Section~\ref{app:second initialization}
of the Appendix. 

	
\section{Conclusion}\label{sec5}

We studied the geometry and stability of solution sets
in regularized composite optimization through an
image--fiber decomposition. By analyzing the image
mapping and generalized basis pursuit mapping separately,
we obtained sufficient conditions for local Hausdorff
Lipschitz stability without requiring uniqueness of
the original minimizers. Our comparison with direct
primal-space analysis identified an additional
injectivity requirement in the primal-space conditions.
For classical $\ell_1$ regularization, the framework
also recovers an existing stability result without
a finite sign-pattern decomposition. Numerical examples
illustrated the recovery of complete solution sets,
including a case with nonpolyhedral fiber geometry.

Although the composite representation does not require
convexity, our geometric and stability results rely
on convexity and additional regularity assumptions.
Further directions include extending the analysis
to nonconvex settings and establishing stability
under more general nonpolyhedral structures.
Developing numerical methods that exploit the
image--fiber decomposition without explicitly
characterizing the generalized basis pursuit mapping
is another direction. Extensions to
infinite-dimensional spaces, where attainment and compactness require additional care, also merit investigation.

\section*{Acknowledgments}
The authors are grateful to Xiyin Zheng and Chunhai Hu for helpful
discussions on this research topic, particularly regarding the
geometric characterization of the generalized basis pursuit mapping. 
	
	\bibliographystyle{plainnat}
	\bibliography{references}

\clearpage
\appendix
\numberwithin{figure}{section}
\numberwithin{table}{section}

\section{Examples illustrating branch persistence}
\label{sec:branch-persistence-examples}

The following examples illustrate the role of branch persistence.
A disappearing branch may destroy Hausdorff continuity,
but branch persistence is not necessary for Hausdorff
Lipschitz continuity. 

\begin{example}\label{E-union-not-HLip}
Let $A:\mathbb R^2\to\mathbb R$ be defined by
$A(x_1,x_2)=x_1$, and set
\[
C_1:=\mathbb R\times\{0\},
\qquad
C_2:=\{0\}\times[0,1],
\qquad
C:=C_1\cup C_2.
\]
Define $\Psi(y):=A^{-1}(y)\cap C$.
Then $\Psi(y)=\{(y,0)\}$ for $y\neq0$, whereas
$\Psi(0)=\{0\}\times[0,1]$. Hence, for $y\neq0$,
\[
d_H\bigl(\Psi(y),\Psi(0)\bigr)
\geq
\operatorname{dist}\bigl((0,1),\Psi(y)\bigr)
=
\sqrt{1+y^2},
\]
so $\Psi$ is not Hausdorff continuous at $0$.

The discontinuity arises from $C_2$, whose domain
$A(C_2)=\{0\}$ excludes every nonzero perturbation.
At $(\bar y,\bar x)=(0,(0,1))$, only $C_2$
contains $\bar x$, so the analogous branch-persistence
condition fails on $D=(-1,1)$.
In contrast, on $D=(0,1)$, only $C_1$ is relevant,
and $\Psi$ is Hausdorff Lipschitz continuous.
\end{example}

This example concerns a general affine-slice mapping,
rather than a lifting set $C_R$ generated by a convex
regularizer. It shows that Hausdorff Lipschitz continuity
of individual polyhedral branches need not extend
to their union. The next example shows that branch
persistence is not necessary, even for $\ell_1$ regularization.

\begin{example}\label{E-l1-no-persistence}
Let $A=(1,1)\in\mathbb R^{1\times2}$ and
$R(x)=\|x\|_1$. Since
$\operatorname{rge}(A^T)=\operatorname{span}\{(1,1)\}$,
\[
C_R
=
(\partial R)^{-1}\bigl(\operatorname{rge}(A^T)\bigr)
=
\mathbb R_+^2\cup\mathbb R_-^2
=:C_+\cup C_-.
\]
Indeed, the components of $x\in C_R$ must have
the same weak sign. The branch domains are
$A(C_+)=[0,+\infty)$ and $A(C_-)=(-\infty,0]$.

At $(\bar y,\bar x)=(0,0)$, both branches contain
$\bar x$, but neither remains feasible for all
nearby $y\in D=\mathbb R$. Thus branch
persistence~\eqref{e-branch-persistence} fails on $D$.

Nevertheless, the generalized basis pursuit mapping is
\[
\Phi(y)
=
\{y(t,1-t):t\in[0,1]\},
\qquad y\in\mathbb R.
\]
Matching points with the same $t$ yields
\[
d_H\bigl(\Phi(y_1),\Phi(y_2)\bigr)
\leq |y_1-y_2|,
\qquad y_1,y_2\in\mathbb R.
\]
Hence $\Phi$ is Hausdorff Lipschitz continuous
despite the failure of branch persistence.
\end{example}

These examples show that branch persistence is sufficient
but not necessary for Hausdorff Lipschitz continuity.
In particular, polyhedral branches may switch without
destroying stability. For $R_E(x)=\|Ex\|_1$,
Section~\ref{l1-type regularizers} of the main paper
establishes Hausdorff Lipschitz continuity without
requiring branch persistence.


\section{Failure of \texorpdfstring{$\mathcal C^2$}{C2}-cone reducibility without an isomorphism}
\label{A-PLQ-nonreducible}

We provide a counterexample showing that the
$\mathcal C^2$-cone reducibility of $R_P^*\circ A^T$
required in Theorem~\ref{L-H-IFT} may fail
without the linear isomorphism assumption on $L$.

Consider the linearly composite PLQ regularizer
\[
R_{\mathcal P}(x)
=
\theta_{\mathcal P,\Sigma}(Lx)
=
\sup_{v\in\mathcal P}
\left\{
\langle Lx,v\rangle
-\frac12\langle\Sigma v,v\rangle
\right\},
\]
where
\[
\mathcal P
:=
\{(u_1,u_2,s)\in\mathbb R^3:
s\geq|u_1|+|u_2|\},
\]
$\Sigma=I_3$, and
$L:\mathbb R^2\to\mathbb R^3$ is defined by
$Lx=(x_1,x_2,0)^T$.
Thus $L$ is injective but not surjective.

For fixed $u=(u_1,u_2)$, the supremum over $s$
is attained at $s=|u_1|+|u_2|$. Hence
\[
R_{\mathcal P}(x)
=
\sup_{u\in\mathbb R^2}
\left\{
\langle x,u\rangle
-u_1^2-u_2^2-|u_1u_2|
\right\}
=
f^*(x),
\]
where
\[
f(u)
:=
u_1^2+u_2^2+|u_1u_2|
=
\frac12\bigl(\|u\|_2^2+\|u\|_1^2\bigr).
\]
Since $f$ is proper, closed, and convex,
$R_{\mathcal P}^*=f^{**}=f$.
Taking $A=I_2$ gives
$R_{\mathcal P}^*\circ A^T=f$.

To identify a reference point for Theorem~\ref{L-H-IFT},
let $F(y,b)=\frac12\|y-b\|^2$, fix $\bar b=0$
and $\bar\lambda>0$, and set $\bar x=\bar y=0$.
Since $R_{\mathcal P}\geq0$ and
$R_{\mathcal P}(0)=0$, we have
$\bar x\in S_{\mathcal P}(\bar b,\bar\lambda)$.
Moreover,
$\bar z=-\bar\lambda^{-1}\nabla_yF(\bar y,\bar b)=0$.
We show that $f$ is not $\mathcal C^2$-cone
reducible at the origin.

Let $\mathcal C:=\operatorname{epi}f$.
Since $f(\tau d)=\tau^2f(d)$ for $\tau\geq0$,
we have $f'(0;d)=0$ for every $d\in\mathbb R^2$.
Consequently,
\[
T_{\mathcal C}(0,0)
=
\mathbb R^2\times\mathbb R_+.
\]

Suppose, to the contrary, that $\mathcal C$ is
$\mathcal C^2$-cone reducible at the origin.
Then there exist a Euclidean space $W$,
a pointed closed convex cone $\mathcal Q\subseteq W$,
and a $\mathcal C^2$ mapping $h$ defined near
the origin such that $h(0,0)=0$,
$Dh(0,0)$ is surjective, and
\[
\mathcal C
=
\{(u,t):h(u,t)\in\mathcal Q\}
\]
locally around the origin.

The tangent-cone formula gives
$T_{\mathcal C}(0,0)=Dh(0,0)^{-1}(\mathcal Q)$.
Since $\mathcal Q$ is pointed,
\[
\operatorname{lin}T_{\mathcal C}(0,0)
=
\ker Dh(0,0)
=
\mathbb R^2\times\{0\}.
\]
Thus $Dh(0,0)$ has rank one and, by surjectivity,
$\dim W=1$.

Since $T_{\mathcal C}(0,0)$ is not a subspace,
$\mathcal Q\neq\{0\}$.
After identifying $W$ with $\mathbb R$ and
changing the sign of $h$ if necessary,
we may assume $\mathcal Q=\mathbb R_+$.
Hence $\mathcal C=\{(u,t):h(u,t)\geq0\}$ locally.

Because $(0,0,1)\notin\ker Dh(0,0)$,
the partial derivative of $h$ with respect to $t$
is nonzero at the origin.
The implicit function theorem yields a
$\mathcal C^2$ function $g$ near $0$ such that
$h(u,t)=0$ if and only if $t=g(u)$.
Thus the local boundary of $\mathcal C$ is
the graph of $g$. Since
$\mathcal C=\operatorname{epi}f$ and $f$ is continuous,
we obtain $f=g$ near the origin.
In particular, $f$ would be $\mathcal C^2$ at $0$.

However, $\nabla f(0)=0$, and the homogeneity
$f(\tau d)=\tau^2f(d)$ would then imply,
by a second-order Taylor expansion,
\[
f(d)
=
\frac12\langle\nabla^2f(0)d,d\rangle,
\qquad d\in\mathbb R^2.
\]
Thus $f$ would be a quadratic form and satisfy
the parallelogram identity. This contradicts
\[
f((1,1)^T)+f((1,-1)^T)
=
6\neq4
=
2f(e_1)+2f(e_2).
\]
Therefore $f$ is not $\mathcal C^2$ at the origin,
contradicting the assumed cone reducibility.

Consequently, $R_{\mathcal P}^*\circ A^T=f$
is not $\mathcal C^2$-cone reducible at the origin,
even with $A=I_2$. Hence the linear isomorphism
assumption on $L$ cannot be omitted in general
without further conditions. 


\section{Additional numerical details}
\label{app:numerical_details}

This section reports numerical settings and additional results
for the experiments in the main paper. The two- and
three-dimensional examples use $m=1$ and $n\in\{2,3\}$;
the sparse-observation TV experiment uses $m=17$ and $n=257$.
Each problem is solved independently
for every parameter value and initialization, without warm starts.

\subsection{Image-space computations}
\label{app:image space}

We numerically verify the explicit optimal image paths
$H_j(1,\lambda)=\{y_j^*(\lambda)\}$ derived in the main paper.
For Figure~\ref{fig:num_H_Phi_2d}(a), the interior-point
method (IP) is applied at
\[
\lambda\in
\{
0.02,0.10,0.18,0.26,0.34,0.42,0.50,0.58,0.66,
0.74,0.82,0.90,0.95,0.98,1.00,1.05,1.10
\}.
\]
The image-space initializations are
\[
y^0\in
\{-2,-1,-0.6,-0.5,-0.3,0,0.3,0.5,0.6,1,1.2,2\}.
\]
These combine generic starting points with image values
induced by the primal initializations described below.
Each $(\lambda,y^0)$ pair is solved independently.
Figure~\ref{fig:num_H_Phi_2d}(a) shows the solutions
from $y^0=1$; the remaining initializations provide
a multistart consistency check. All computed solutions
agree with the explicit paths within the prescribed tolerance.

For the nonpolyhedral example, the image problem is
restricted to $[-\sqrt{2}\,r,\sqrt{2}\,r]$, where $r=0.75$.
Initializations outside this interval are projected onto it.
Since $\sqrt{2}\,r>1$ and $0\leq y_j^*(\lambda)\leq1$,
the restriction does not affect the optimal image paths.


\subsection{Direct-solver settings and initializations}
\label{app:solver}

All direct-solver experiments use the same parameter grid:
\[
\begin{aligned}
\lambda\in\{&
0.02,0.08,0.14,0.20,0.26,0.32,0.38,0.44,0.50,0.56,0.62,0.68,\\
&0.74,0.80,0.86,0.90,0.94,0.97,0.99,1.00,1.01,1.03,1.06,1.10,1.15
\}.
\end{aligned}
\]
The finer grid near $\lambda=1$ captures the transition
from nontrivial solution fibers to a singleton.

\begin{table}[ht]
\centering
\caption{Main settings of the direct numerical methods.}
\label{tab:solver_parameters}
\renewcommand{\arraystretch}{1.15}
\begin{tabular}{@{}lp{0.78\linewidth}@{}}
\hline
Method & Main settings \\
\hline
FISTA
& Initial Lipschitz estimate $L_0=1$,
backtracking factor $2$, tolerance $10^{-10}$ \\
IP
& Optimality tolerance $10^{-11}$,
constraint tolerance $10^{-10}$ \\
ADMM
& Penalty parameter $\rho=1$,
absolute/relative tolerances $10^{-10}/10^{-9}$ \\
\hline
\end{tabular}
\end{table}

Table~\ref{tab:solver_parameters} summarizes the solver settings. FISTA uses backtracking without restart or a monotonicity
safeguard, with soft-thresholding for the $\ell_1$ examples.
For $R(x)=\|x\|_1+\delta_{rB_2}(x)$, FISTA and the ADMM
$z$-update use the corresponding constrained proximal mapping,
while IP explicitly incorporates the Euclidean-ball constraint.
Other solver settings remain unchanged across the experiments.

\medskip
\noindent\textbf{Two-dimensional initializations.}

Both two-dimensional examples use $A=(1,1)$ and the same
initialization sets. The first contains cross-quadrant
and multi-scale starting points:
\[
X_A^0
=
\begin{pmatrix}
0.40&-0.10&-0.40&0.10&0.80&-0.20&-0.80&0.20&1.60&-0.40\\
0.10&0.40&-0.10&-0.40&0.20&0.80&-0.20&-0.80&0.40&1.60
\end{pmatrix}.
\]
For the nonpolyhedral example, initial feasibility is not
required; the Euclidean-ball constraint is handled by the solvers.

The second initialization set is
\[
X_B^0
=
\begin{pmatrix}
0.33&0.36&0.39&0.42&0.46&0.54&0.58&0.61&0.64&0.67\\
0.67&0.64&0.61&0.58&0.54&0.46&0.42&0.39&0.36&0.33
\end{pmatrix}.
\]
Each column satisfies $Ax^0=\|x^0\|_1=1$.
Thus the initial points differ only along
$\ker A=\operatorname{span}\{(1,-1)\}$, preserving
the image and regularizer value. Moreover,
$\|x^0\|_2\leq0.75$ for every column, so this
set is feasible for the nonpolyhedral example. 


\subsection{Additional two-dimensional direct-solver results}
\label{app:second initialization}

Figures~\ref{fig:app_direct_2d_B} and
\ref{fig:app_nonpoly_direct_B} report numerical selections
from the second initialization set for the $\ell_1$
and nonpolyhedral examples, respectively.
Together with the main-paper results, they illustrate
how direct numerical selections may depend on initialization.

\begin{figure}[ht]
    \centering
    \includegraphics[width=0.78\textwidth]{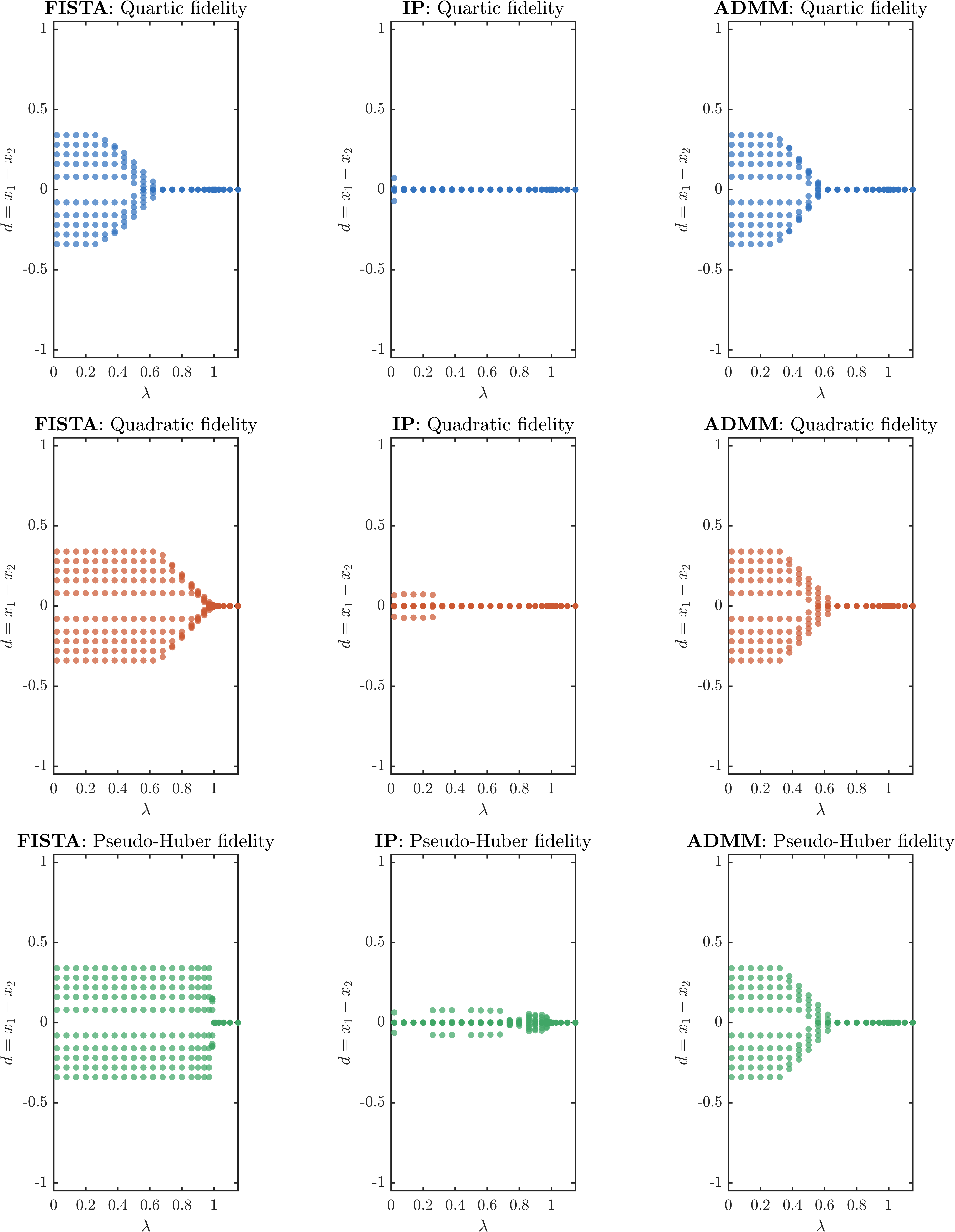}
    \caption{Numerical selections obtained by FISTA, IP,
    and ADMM from the second initialization set
    in the two-dimensional $\ell_1$ example.}
    \label{fig:app_direct_2d_B}
\end{figure}

\begin{figure}[ht]
    \centering
    \includegraphics[width=0.75\textwidth]{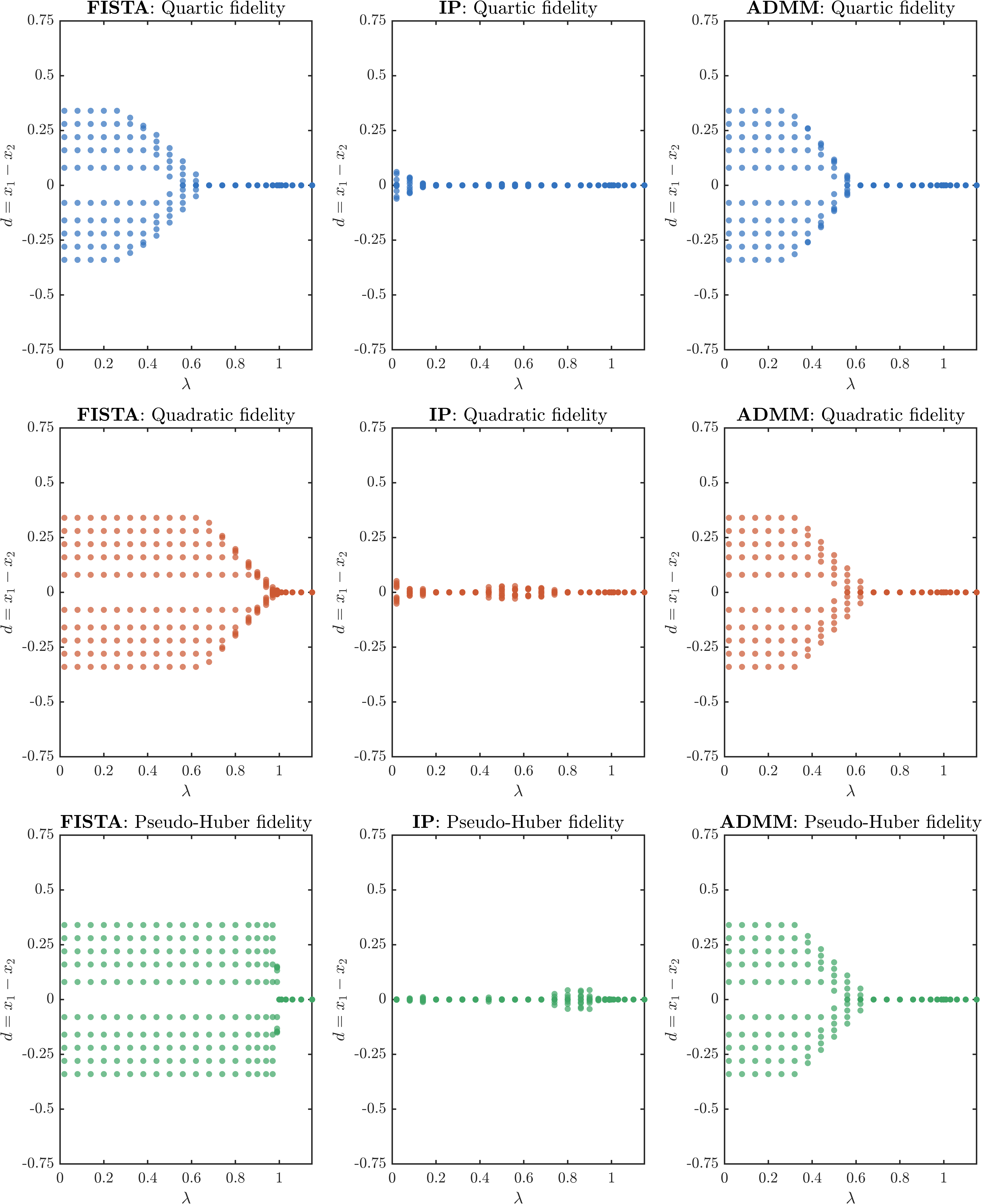}
    \caption{Numerical selections obtained by FISTA, IP,
    and ADMM from the second initialization set
    in the example with nonpolyhedral $\operatorname{gph}\Phi$.}
    \label{fig:app_nonpoly_direct_B}
\end{figure} 


\subsection{Three-dimensional extension of the $\ell_1$ example}
\label{app:three_dimensional}

We extend the two-dimensional $\ell_1$ example to illustrate
how the dimension of $\ker A$ affects the solution fibers.
Let
\[
A=(1,1,1)\in\mathbb R^{1\times3},
\qquad R(x)=\|x\|_1.
\]
Since $V(y)=\min_{Ax=y}\|x\|_1=|y|$, the optimal image
paths $y_j^*(\lambda)$ coincide with those of the
two-dimensional example. However, for $y\geq0$,
\[
\Phi(y)
=
\{x\in\mathbb R_+^3:x_1+x_2+x_3=y\},
\]
which is a two-dimensional simplex for $y>0$
and reduces to $\{0\}$ at $y=0$.

To visualize these fibers, we use coordinates along
an orthonormal basis of $\ker A$:
\[
z_1=\frac{x_1-x_2}{\sqrt2},
\qquad
z_2=\frac{x_1+x_2-2x_3}{\sqrt6}.
\]
Here $y=Ax$ determines the image, while $(z_1,z_2)$
parameterize the two kernel directions.
Thus the nonzero fibers are triangles rather than
the line segments of the two-dimensional example;
see Figure~\ref{fig:app_Phi_3d}.

\begin{figure}[ht]
    \centering
    \includegraphics[width=0.30\textwidth]{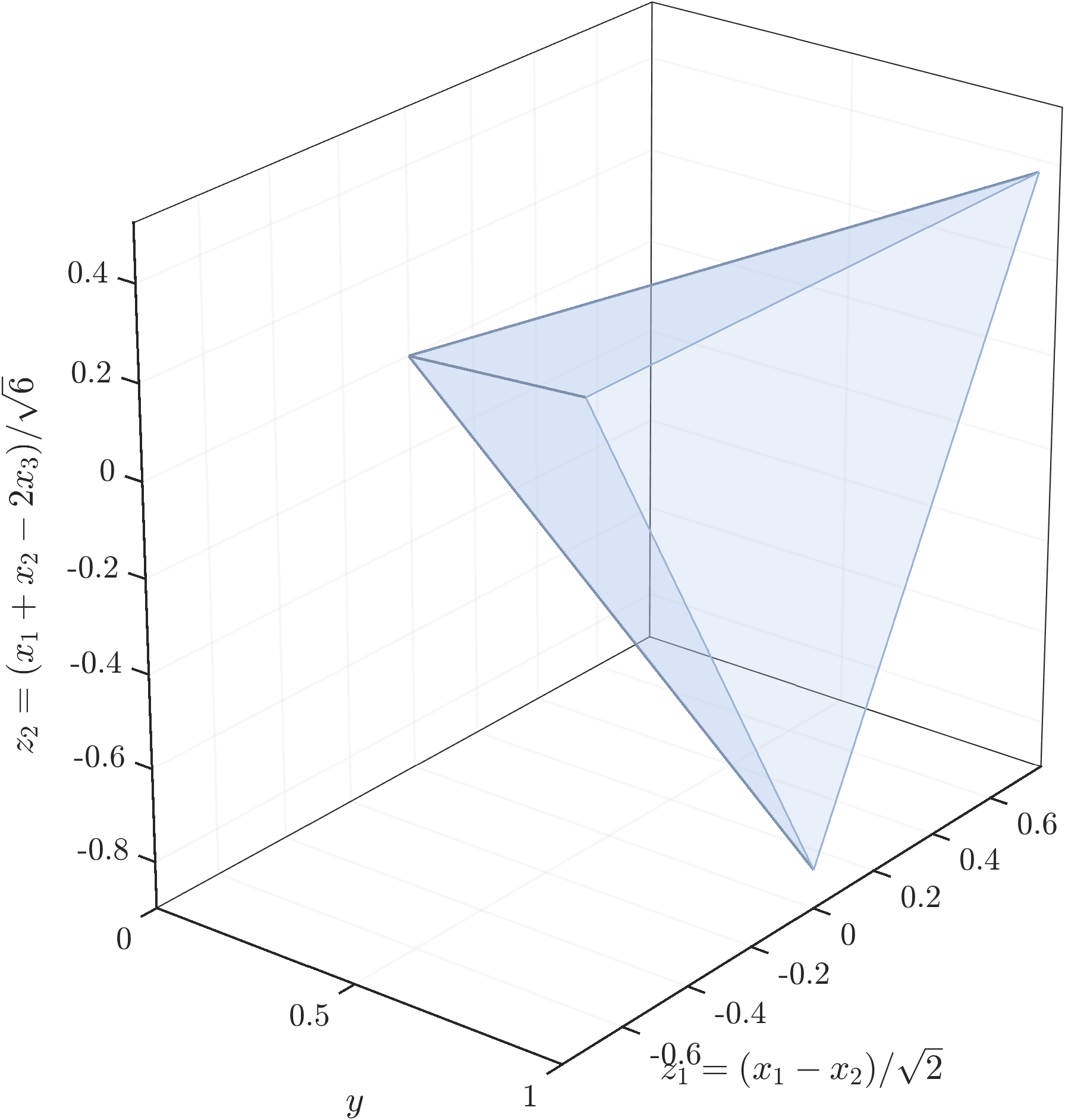}
    \caption{Graph of $\Phi$ in the $(y,z_1,z_2)$ coordinates.
    Each fiber for $y>0$ is a triangle that shrinks
    to the origin as $y\downarrow0$.}
    \label{fig:app_Phi_3d}
\end{figure}

Combining $\Phi$ with the optimal image paths gives
\[
S_j(1,\lambda)
=
\Phi\bigl(y_j^*(\lambda)\bigr)
=
\begin{cases}
\{x\in\mathbb R_+^3:\sum_{i=1}^3x_i=y_j^*(\lambda)\},
& 0<\lambda<1,\\
\{0\}, & \lambda\geq1,
\end{cases}
\]
for $j=1,2,3$. For $0<\lambda<1$, each solution
set is the triangle with vertices
$y_j^*(\lambda)e_i$, $i=1,2,3$.
Figure~\ref{fig:app_S_3d} shows the complete
solution mappings.

\begin{figure}[ht]
    \centering
    \includegraphics[width=0.8\textwidth]{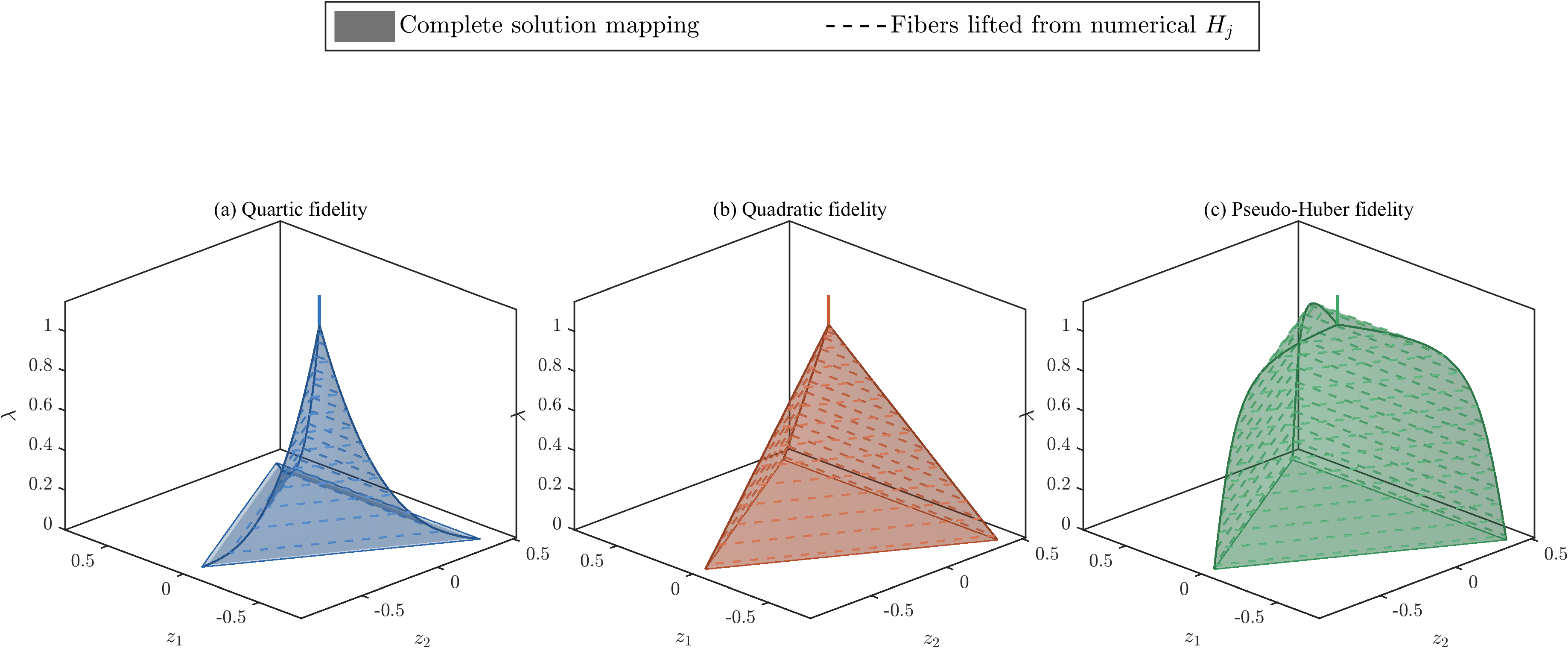}
    \caption{Complete solution mappings in the three-dimensional
    $\ell_1$ example. Shaded regions represent the complete
    solution sets, while dashed triangular fibers are recovered
    from numerically computed optimal images via the
    composite representation.}
    \label{fig:app_S_3d}
\end{figure}

For direct numerical comparisons, we use two initialization
sets. The first contains cross-quadrant and multi-scale points:
\[
X_A^0
=
\begin{pmatrix}
0.35&-0.10&-0.35&0.10&0.70&-0.20&-0.70&0.20&1.40&-0.40\\
0.10&0.25&-0.10&-0.25&0.20&0.50&-0.20&-0.50&0.40&1.00\\
0.05&0.15&-0.05&-0.15&0.10&0.30&-0.10&-0.30&0.20&0.60
\end{pmatrix}.
\]
The second fixes the image and regularizer value:
\[
X_B^0
=
\begin{pmatrix}
0.90&0.05&0.05&0.60&0.60&0.30&0.30&0.10&0.10&1/3\\
0.05&0.90&0.05&0.30&0.10&0.60&0.10&0.60&0.30&1/3\\
0.05&0.05&0.90&0.10&0.30&0.10&0.60&0.30&0.60&1/3
\end{pmatrix}.
\]
Each column represents an initial point, and every
column of $X_B^0$ satisfies $Ax^0=\|x^0\|_1=1$.
Thus $X_B^0$ varies the primal coordinates along
$\ker A$ while preserving the image and regularizer value.

Figures~\ref{fig:app_direct_3d_A} and
\ref{fig:app_direct_3d_B} report the direct numerical
selections obtained by FISTA, IP, and ADMM from
$X_A^0$ and $X_B^0$, respectively.

Figure~\ref{fig:app_slice_3d} further compares the
complete triangular fibers with direct numerical
selections at $\lambda=0.2$.

\begin{figure}[ht]
    \centering
    \includegraphics[width=0.85\textwidth]{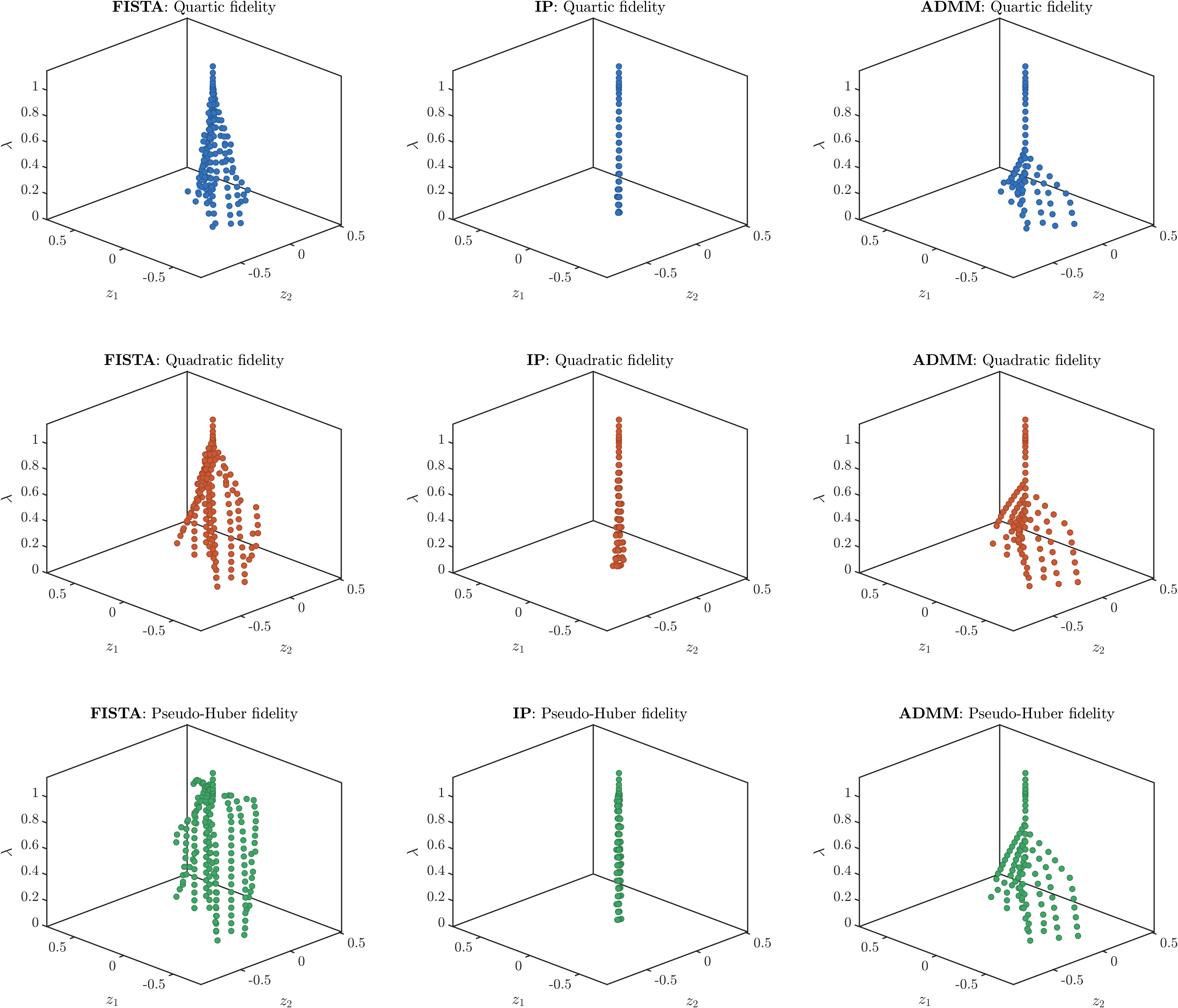}
    \caption{Direct numerical selections obtained by FISTA,
    IP, and ADMM from $X_A^0$ in the three-dimensional
    $\ell_1$ example.}
    \label{fig:app_direct_3d_A}
\end{figure}

\begin{figure}[ht]
    \centering
    \includegraphics[width=0.85\textwidth]{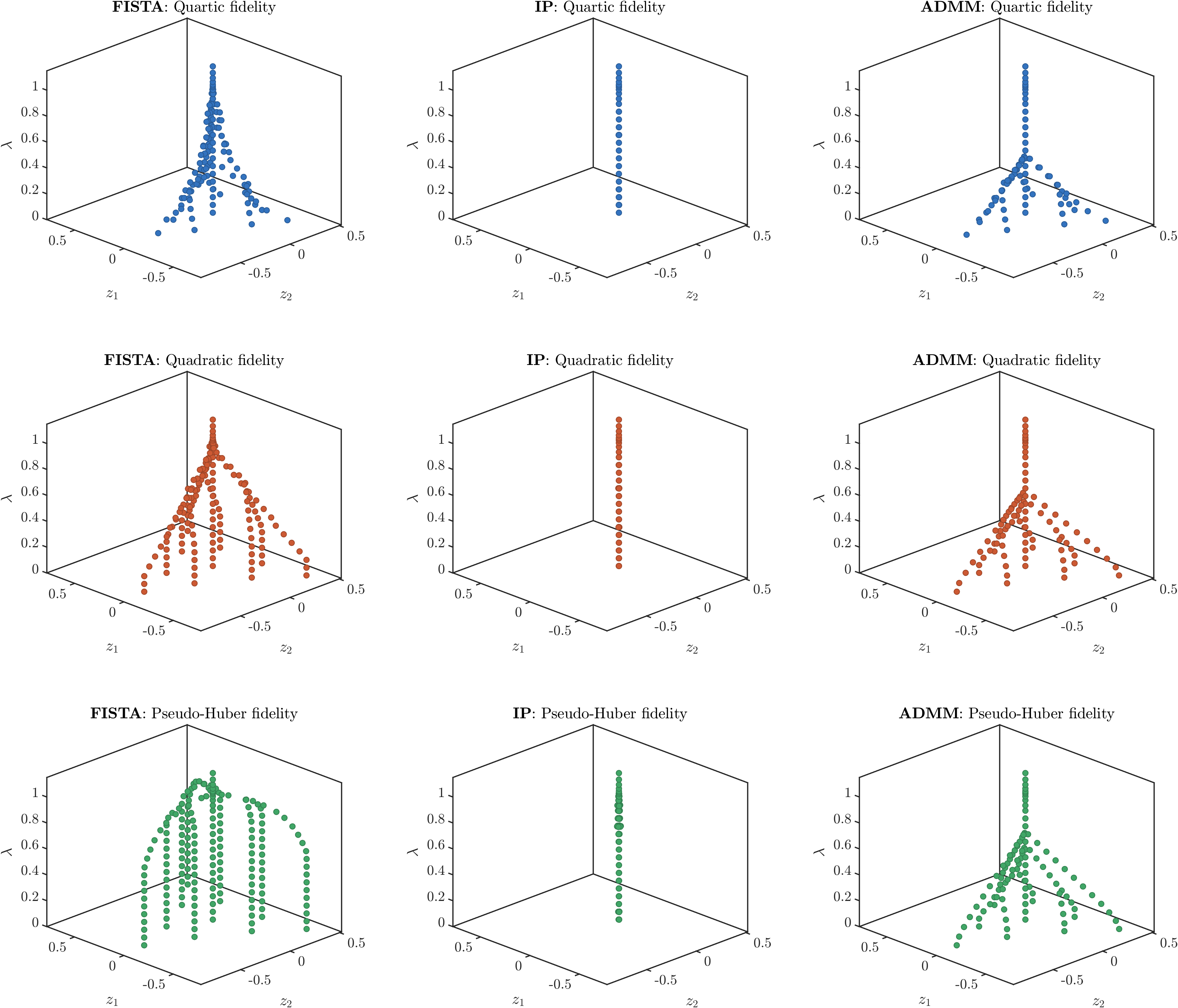}
    \caption{Direct numerical selections obtained by FISTA,
    IP, and ADMM from $X_B^0$ in the three-dimensional
    $\ell_1$ example.}
    \label{fig:app_direct_3d_B}
\end{figure}

\begin{figure}[ht]
    \centering
    \includegraphics[width=0.85\textwidth]{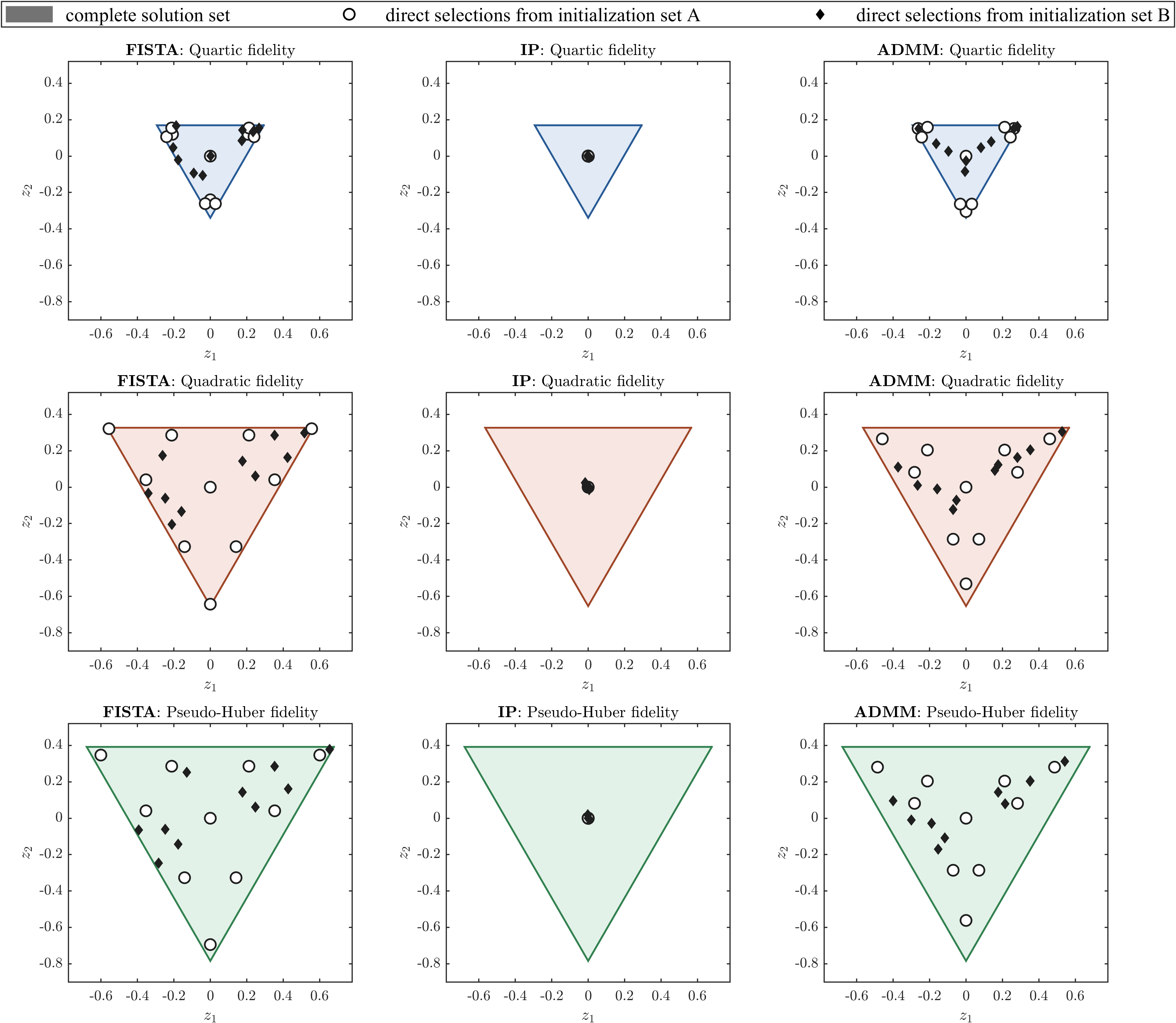}
    \caption{Solution-set cross-sections at $\lambda=0.2$.
    Triangular regions represent the complete fibers
    recovered through the composite representation,
    while points denote solutions returned by FISTA,
    IP, and ADMM from both initialization sets.}
    \label{fig:app_slice_3d}
\end{figure}


\subsection{Total variation reconstruction from sparse observations}
\label{app:tv_numerical} 

We illustrate the composite representation through
one-dimensional signal reconstruction from sparse, noisy
observations using total variation (TV) regularization
\cite{ChambollePock2011,RudinOsherFatemi1992}.
The original problem has $257$ variables, while the
reduced image problem has only $17$ and is strongly convex.
Although the optimal image $y^*$ is unique, the primal
solution set may remain nonunique. The representation
$S(b,\lambda)=\Phi(y^*)$ allows us to recover the complete
solution set and examine how its geometry varies with
the regularization parameter. 

\medskip

\noindent\textbf{Problem formulation and composite representation}

Let $x\in\mathbb R^n$ be a signal and
$1=s_1<\cdots<s_m=n$ the observed coordinates.
Define the first-difference operator
$D\in\mathbb R^{(n-1)\times n}$ and the observation
operator $A\in\mathbb R^{m\times n}$ by
\[
(Dx)_i:=x_{i+1}-x_i,
\qquad
(Ax)_j:=x_{s_j}.
\]
We consider the TV-regularized reconstruction problem
\begin{equation}\label{e-tv-supp-problem}
\min_{x\in\mathbb R^n}
\left\{
\frac12\|Ax-b\|^2+\lambda\|Dx\|_1
\right\},
\qquad \lambda>0.
\end{equation}
The TV regularizer is an instance of the $\ell_1$-type regularizer $R_E(x)=\|Ex\|_1$ studied in Section~\ref{l1-type regularizers} of the main paper, with $E=D$. 

For the generalized basis pursuit value function,
the triangle inequality gives
$V(y)=\min_{Ax=y}\|Dx\|_1
=\sum_{j=1}^{m-1}|y_{j+1}-y_j|$.
Equality holds precisely when $x$ is monotone
between consecutive observed coordinates.
Thus the reduced image problem is
\[
y^*
=
\arg\min_{y\in\mathbb R^m}
\left\{
\frac12\|y-b\|^2
+\lambda\sum_{j=1}^{m-1}|y_{j+1}-y_j|
\right\}.
\]
Its objective is strongly convex, so $H(b,\lambda)=\{y^*\}$.
The composite representation then gives
$S(b,\lambda)=\Phi(y^*)$, where $\Phi(y^*)$
contains all monotone interpolations between
consecutive optimal image values.

\begin{proposition}[Reduced TV value and complete fiber]
\label{prop:tv-fiber-supp}
For every $y\in\mathbb R^m$,
\[
V(y)=\sum_{j=1}^{m-1}|y_{j+1}-y_j|.
\]
Moreover, $x\in\Phi(y)$ if and only if
$x_{s_j},\ldots,x_{s_{j+1}}$ is monotone
from $y_j$ to $y_{j+1}$ for every
$j=1,\ldots,m-1$.
\end{proposition}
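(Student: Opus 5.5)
The plan is to split $\|Dx\|_1$ into blocks between consecutive observed coordinates and apply the triangle inequality blockwise. This is legitimate because $s_1=1$ and $s_m=n$ make the blocks $\{s_j,\ldots,s_{j+1}\}$, $j=1,\ldots,m-1$, cover all difference indices exactly once:
\[
\|Dx\|_1=\sum_{j=1}^{m-1}\sum_{i=s_j}^{s_{j+1}-1}|x_{i+1}-x_i|.
\]
For any $x$ with $Ax=y$, the triangle inequality on each inner sum gives
\[
\sum_{i=s_j}^{s_{j+1}-1}|x_{i+1}-x_i|\ge |x_{s_{j+1}}-x_{s_j}|=|y_{j+1}-y_j|.
\]
Hence $V(y)\ge\sum_j|y_{j+1}-y_j|$.

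For the reverse inequality, every $y\in\mathbb R^m$ is attainable. Define $x$ by linear interpolation: set $x_{s_j}=y_j$ and let $x$ be affine in the index on each block. This $x$ is feasible, and each block is monotone, so equality holds in the triangle inequality. Thus the infimum is attained and equals $\sum_j|y_{j+1}-y_j|$. This also shows $\operatorname{dom}\Phi=\mathbb R^m$ and that fiber attainment holds.

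For the characterization of $\Phi(y)$, observe that $x\in\Phi(y)$ iff $Ax=y$ and $\|Dx\|_1=V(y)$. Since $\|Dx\|_1-V(y)$ is a sum of nonnegative per-block gaps, this holds iff every block gap vanishes, i.e. iff
\[
\sum_{i=s_j}^{s_{j+1}-1}|x_{i+1}-x_i|=|x_{s_{j+1}}-x_{s_j}|\quad\text{for all }j.
\]
The remaining step is the elementary fact that for reals $a_0,\ldots,a_k$ one has $\sum|a_{i+1}-a_i|=|a_k-a_0|$ iff all nonzero increments share the sign of $a_k-a_0$, i.e. iff the sequence is monotone from $a_0$ to $a_k$. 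This is the equality case of the triangle inequality for real numbers: equality in $|\sum t_i|\le\sum|t_i|$ holds iff all $t_i$ are of the same weak sign. When $y_{j+1}=y_j$, this forces the block to be constant, which is consistent with ``monotone from $y_j$ to $y_{j+1}$''.

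I expect no real obstacle. The only point needing care is the index bookkeeping, namely that the blocks partition $\{1,\ldots,n-1\}$, and stating the equality case of the triangle inequality cleanly, including the degenerate case $y_{j+1}=y_j$. No convex-analytic machinery from Proposition~\ref{P-Phi-geometry} is needed; the argument is purely direct.
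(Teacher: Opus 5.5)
Your proposal is correct and follows essentially the same route as the paper: a blockwise triangle inequality between consecutive observed coordinates, attainment by a monotone (e.g.\ linear) interpolation, and the equality case of the triangle inequality to characterize $\Phi(y)$. Your explicit bookkeeping, namely that the blocks partition $\{1,\ldots,n-1\}$ and that $\|Dx\|_1-V(y)$ is a sum of nonnegative per-block gaps, makes precise steps the paper leaves implicit.
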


\begin{proof}
For any $x$ satisfying $Ax=y$, the triangle
inequality gives
\[
\sum_{k=s_j}^{s_{j+1}-1}|x_{k+1}-x_k|
\geq
|x_{s_{j+1}}-x_{s_j}|
=
|y_{j+1}-y_j|.
\]
Equality holds precisely when the increments
have the same weak sign, including the case
where all increments vanish.
Such monotone interpolations exist on every
interval and attain the lower bound, proving
both claims.
\end{proof}

The fiber geometry can be characterized explicitly
through the following proposition.

\begin{proposition}[Dimension, envelopes, and diameter]
\label{prop:tv-geometry-supp}
For $j=1,\ldots,m-1$, let
\[
q_j:=s_{j+1}-s_j-1,
\qquad
d_j:=y_{j+1}-y_j,
\]
where $q_j$ is the number of unobserved coordinates
between two consecutive observed coordinates
$s_j$ and $s_{j+1}$, and $d_j$ is the difference
between the corresponding image values.

Then
\[
\dim\operatorname{aff}\Phi(y)
=
\sum_{j:d_j\neq0}q_j.
\]
For $1\leq r\leq q_j$, the pointwise bounds are
\[
\begin{aligned}
\min_{x\in\Phi(y)}x_{s_j+r}
&=\min\{y_j,y_{j+1}\},\\
\max_{x\in\Phi(y)}x_{s_j+r}
&=\max\{y_j,y_{j+1}\}.
\end{aligned}
\]
Moreover, the Euclidean diameter is
\begin{equation}\label{e-tv-diameter-supp}
\operatorname{diam}\Phi(y)
=
\left(
\sum_{j=1}^{m-1}
q_j(y_{j+1}-y_j)^2
\right)^{1/2}.
\end{equation}
\end{proposition}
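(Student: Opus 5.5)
The plan is to use Proposition~\ref{prop:tv-fiber-supp} to write $\Phi(y)$ as a product of independent pieces, one per observation interval, and then compute each of the three quantities blockwise. The observed coordinates are fixed at $x_{s_j}=y_j$. The unobserved coordinates split into disjoint blocks $x_{s_j+1},\ldots,x_{s_j+q_j}$, $j=1,\ldots,m-1$. By Proposition~\ref{prop:tv-fiber-supp}, $x\in\Phi(y)$ exactly when, for each $j$, block $j$ is a monotone sequence from $y_j$ to $y_{j+1}$. Hence $\Phi(y)$ is the Cartesian product, over $j$, of the sets
\[
M_j:=\{w\in\mathbb R^{q_j}: y_j\le w_1\le\cdots\le w_{q_j}\le y_{j+1}\},
\]
written here for $d_j\ge0$; the case $d_j<0$ is the same with the inequalities reversed. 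All three claims then reduce to facts about a single monotone-path polytope $M_j$.

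For the dimension, I will treat two cases. If $d_j=0$, then $M_j=\{y_j\mathbf 1\}$ is a single point. If $d_j\neq0$, then $M_j$ contains the open set of strictly monotone sequences, for example $w_r=y_j+\tfrac{r}{q_j+1}d_j$ with small independent perturbations, so $\dim M_j=q_j$. The affine dimension of a product is the sum of the dimensions of its factors, which gives $\sum_{j:d_j\neq0}q_j$. For the pointwise bounds, monotonicity forces every coordinate to lie between $y_j$ and $y_{j+1}$. Both bounds are attained, by the constant sequences $w\equiv y_j$ and $w\equiv y_{j+1}$; each of these is monotone, with a single jump at the end or at the start.

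For the diameter, the product structure gives
\[
\operatorname{diam}\Phi(y)^2=\sum_j\operatorname{diam}(M_j)^2 .
\]
So it suffices to show $\operatorname{diam}M_j=\sqrt{q_j}\,|d_j|$. The upper bound holds because any two points of $M_j$ differ by at most $|d_j|$ in each of the $q_j$ coordinates. The lower bound is attained by the pair $w\equiv y_j$ and $w'\equiv y_{j+1}$, which are both in $M_j$. Taking maximizers block by block is legitimate because the blocks are independent.

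The main obstacle I expect is stating the product decomposition carefully. The observed coordinates are shared endpoints of adjacent intervals, so I must check that the monotonicity constraints on different intervals involve disjoint sets of free variables. They do, because the endpoints are fixed to the values $y_j$. I must also handle the degenerate cases $q_j=0$ and $d_j=0$ consistently in each formula.
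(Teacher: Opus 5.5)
Your proposal is correct and follows the paper's proof. The paper treats each interval independently, describing block $j$ by its nonnegative increments summing to $d_j$ (a $q_j$-dimensional simplex) where you use the monotone values directly; it then obtains the pointwise bounds and the diameter from the same two extreme step interpolations $w\equiv y_j$ and $w\equiv y_{j+1}$, and adds dimensions and squared diameters across intervals. Your explicit Cartesian-product decomposition simply makes rigorous the paper's remarks that interval interiors are disjoint and that interpolations can be chosen independently on each interval.
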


\begin{proof}
For $d_j>0$, the increments
$u_r:=x_{s_j+r}-x_{s_j+r-1}$ satisfy
$u_r\geq0$ and
$\sum_{r=1}^{q_j+1}u_r=d_j$,
forming a simplex of dimension $q_j$.
The same holds for $d_j<0$ after reversing
signs, while $d_j=0$ forces a constant
interpolation. Since interval interiors are
disjoint, the dimensions add.

The pointwise bounds follow by placing the
entire change before or after the selected
coordinate. Moreover, any two monotone
interpolations differ by at most $|d_j|$
at each of the $q_j$ unobserved coordinates.
This bound is attained by two opposite step
interpolations: one makes the entire change
immediately after $s_j$, while the other
makes it immediately before $s_{j+1}$.
Their values differ by exactly $|d_j|$
at every unobserved coordinate.
Since such interpolations can be chosen
independently on each interval, the maximum
squared distance is
$\sum_{j=1}^{m-1}q_jd_j^2$,
which proves \eqref{e-tv-diameter-supp}.
\end{proof}

Since $H(b,\lambda)=\{y^*\}$, these propositions
characterize the complete solution set
$S(b,\lambda)=\Phi(y^*)$, including its dimension,
pointwise bounds, and diameter.
The pointwise bounds describe coordinatewise
envelopes, while the complete fiber must also
satisfy the monotonicity conditions.
In contrast, standard direct numerical solvers
typically return individual minimizers, which
do not by themselves characterize the entire
solution set.

\medskip

\noindent\textbf{Experimental setup}

We set $n=257$, $m=17$, and use equally spaced
sampling indices $s_j=1+16(j-1)$,
$j=1,\ldots,17$.
The planted signal is
\[
x_k^{\mathrm{true}}=
\begin{cases}
0.8, & 1\leq k\leq64,\\
-0.4, & 65\leq k\leq128,\\
1.1, & 129\leq k\leq192,\\
0.2, & 193\leq k\leq257.
\end{cases}
\]
We generate noisy observations
$b=Ax^{\mathrm{true}}+0.08\,\xi$, where
$\xi\sim\mathcal N(0,I_{17})$, using MATLAB's
random number generator with seed $20261010$.
The observations are kept fixed for all values
of $\lambda$.
The reduced problem is solved independently for
$25$ logarithmically spaced values of $\lambda$
in $[10^{-3},10]$, without warm starts.

We solve the reduced TV problem using ADMM.
Introducing $z=D_my$, where
$D_m\in\mathbb R^{16\times17}$ is the
first-difference operator on $\mathbb R^{17}$,
the iterations with penalty parameter $\rho=1$
are
\begin{align*}
y^{\ell+1}
&=(I+\rho D_m^\top D_m)^{-1}
\bigl(b+\rho D_m^\top(z^\ell-u^\ell)\bigr),\\
z^{\ell+1}
&=\operatorname{soft}_{\lambda/\rho}
\bigl(D_my^{\ell+1}+u^\ell\bigr),\\
u^{\ell+1}
&=u^\ell+D_my^{\ell+1}-z^{\ell+1},
\end{align*}
where $\operatorname{soft}_{\tau}$ denotes
componentwise soft-thresholding.
For each $\lambda$, we initialize
$y^0=b$, $z^0=D_mb$, and $u^0=0$.
We stop when both the primal residual
$\|D_my^\ell-z^\ell\|$ and the dual residual
$\rho\|D_m^\top(z^\ell-z^{\ell-1})\|$
are below $10^{-10}$.
All $25$ problems satisfy these criteria,
with maximum final residuals
$9.84\times10^{-11}$ and
$9.55\times10^{-11}$, respectively.
The iteration counts range from $57$ to $563$.

Figure~\ref{fig:tv_complete_fibers_supp}
illustrates the optimal images and their
corresponding solution fibers at four values
of $\lambda$. The orange open circles
represent the unique optimal image values $y^*\in\mathbb R^{17}$
at the observed coordinates, while the
shaded bands show the exact pointwise ranges
of $\Phi(y^*)$. The blue curve is one optimal
reconstruction obtained by linear interpolation.
The bands describe coordinatewise bounds
rather than the complete fiber, whose elements
must also satisfy the monotonicity conditions
in Proposition~\ref{prop:tv-fiber-supp}.

\begin{figure}[ht]
\centering
\includegraphics[width=0.9\textwidth]
{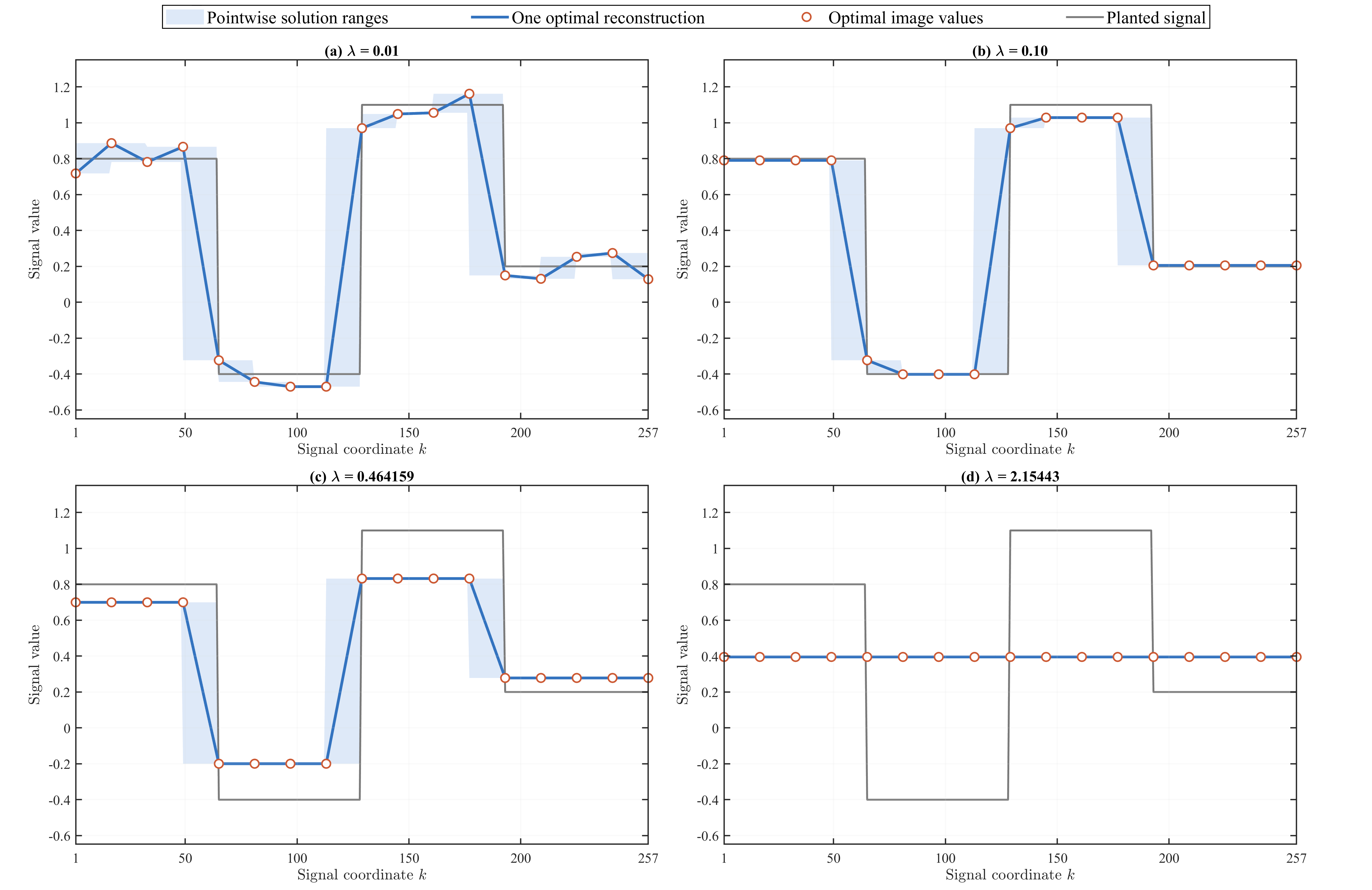}
\caption{Optimal images and solution fibers in TV
reconstruction from sparse observations.
Orange open circles indicate the optimal image
values at the observed coordinates, and the
gray curve shows the planted signal.
The blue curve represents one optimal reconstruction,
while the shaded bands show the exact pointwise
ranges of the complete solution sets.
Admissible reconstructions must also be monotone
between consecutive observed coordinates.
The horizontal axis denotes the signal coordinate
$k=1,\ldots,257$.}
\label{fig:tv_complete_fibers_supp}
\end{figure}

Figure~\ref{fig:tv_diameter_supp} shows the
diameter of the complete solution set as a
function of $\lambda$, computed from
\eqref{e-tv-diameter-supp}.
In this experiment, increasing $\lambda$
progressively fuses neighboring optimal image
values, reducing the dimension and diameter
of the corresponding solution fibers.

\begin{figure}[ht]
\centering
\includegraphics[width=0.75\textwidth]
{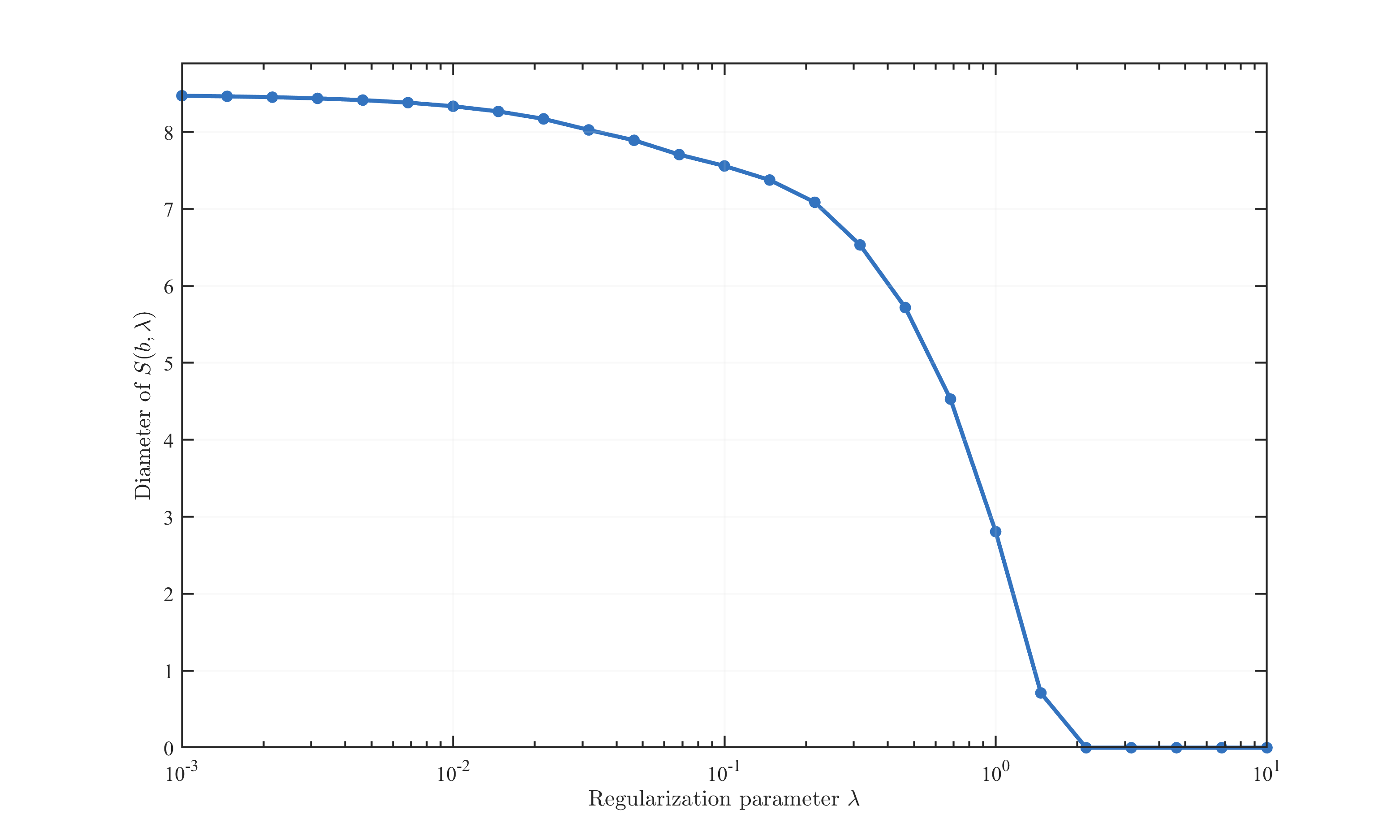}
\caption{Exact diameter of the complete TV
solution set as a function of the regularization
parameter $\lambda$, computed
from~\eqref{e-tv-diameter-supp}.}
\label{fig:tv_diameter_supp}
\end{figure}

\medskip
\noindent\textbf{Relation to generalized-lasso path methods}

Generalized-lasso path algorithms, related to LARS for the classical lasso, exploit the piecewise-linear
structure of solution paths as $\lambda$ varies \cite{TibshiraniTaylor2011}.
Our objective is complementary: for each $\lambda$,
we recover the complete primal solution set from the
unique optimal image $y^*$ and the fiber mapping
$\Phi(y^*)$, including its dimension, pointwise bounds,
and diameter. A path algorithm could replace ADMM in
computing $y^*$, but the image path alone does not
characterize the entire fiber.

Once $y^*$ is computed, no further optimization in
$\mathbb R^{257}$ is needed. The complete solution set
is characterized by the monotonicity conditions in
Proposition~\ref{prop:tv-fiber-supp}, while its dimension,
pointwise bounds, and diameter follow from
Proposition~\ref{prop:tv-geometry-supp}.
Table~\ref{tab:tv_results} reports the geometry of
the four representative solution fibers shown in
Figure~\ref{fig:tv_complete_fibers_supp}.
These parameter values illustrate the reduction in
the dimension and diameter of the complete solution
set, culminating in a singleton.
The affine dimensions are computed using a
$10^{-8}$ tolerance for detecting nonzero adjacent
differences. Since each interval contains 15
unobserved coordinates, the reported dimensions
correspond to 15, 5, 3, and 0 nonzero adjacent
differences in the respective optimal images.

\begin{table}[ht]
\centering
\caption{Dimensions and diameters of the four optimal
TV fibers illustrated in
Figure~\ref{fig:tv_complete_fibers_supp}.}
\label{tab:tv_results}
\begin{tabular}{@{}rrr@{}}
\hline
$\lambda$
& $\operatorname{diam}S(b,\lambda)$
& $\dim\operatorname{aff}S(b,\lambda)$\\
\hline
$0.01$     & $8.33398$ & $225$\\
$0.10$     & $7.55860$ & $75$\\
$0.464159$ & $5.71840$ & $45$\\
$2.15443$  & $0$       & $0$\\
\hline
\end{tabular}
\end{table}

The solution set becomes a singleton when the optimal
image is constant. Indeed, writing
$\bar b:=m^{-1}\sum_{j=1}^m b_j$, the reduced optimality
condition gives the threshold
\[
\lambda_{\mathrm{crit}}
=
\max_{1\leq k<m}
\left|\sum_{j=1}^k(b_j-\bar b)\right|.
\]
For the generated observations,
$\lambda_{\mathrm{crit}}\approx1.68159$.
Consequently, for every
$\lambda\geq\lambda_{\mathrm{crit}}$,
the optimal image is constant and
\[
S(b,\lambda)=\{\bar b\mathbf{1}_{257}\}.
\]
Thus the zero diameter at $\lambda=2.15443$
holds exactly, independently of numerical tolerance.

This experiment illustrates how the composite
representation characterizes all minimizers and
quantifies their nonuniqueness, rather than returning
only individual numerical selections.

\end{document}